\numberwithin{equation}{section}
\def\XXint#1#2#3{{\setbox0=\hbox{$#1{#2#3}{\int}$ }
\vcenter{\hbox{$#2#3$ }}\kern-.6\wd0}}
\newcommand{\weakto}{\rightharpoonup}
\newcommand{\n}{\hat{\text{n}}}
\newcommand{\bbA}{\mathbb{A}}
\newcommand{\DD}{\mathbb{D}}
\newcommand{\RR}{\mathbb{R}}
\newcommand{\PP}{\mathbb{P}}
\newcommand{\Div}{\nabla\cdot}
\newcommand{\Lbf}{\mathbf{L}}
\newcommand{\Hbf}{\mathbf{H}}
\newcommand{\Jbf}{\mathbf{J}}
\newcommand{\Nn}{\mathcal{N}}
\newcommand{\NN}{\mathbb{N}}
\newcommand{\Ee}{\mathcal{E}}
\newcommand{\calH}{\mathcal{H}}
\newcommand{\parf}{\partial_{x_1}}
\newcommand{\pars}{\partial_{x_2}}
\newcommand{\parff}{\partial_{x_1x_1}}
\newcommand{\parfs}{\partial_{x_1x_2}}
\newcommand{\parss}{\partial_{x_2x_2}}
\renewcommand{\SS}{\mathbb{S}}
\renewcommand{\div}{\text{div}}
\DeclareMathOperator{\diam}{diam}
\DeclareMathOperator{\dist}{dist}
\DeclareMathOperator{\GL}{GL}
\DeclareMathOperator{\supp}{supp}
\DeclareMathOperator{\Id}{\mathbb{I}}
\DeclareMathOperator{\curl}{curl}
\newtheorem{theorem}{Theorem}[section]
\newtheorem{definition}[theorem]{Definition}
\newtheorem{lemma}[theorem]{Lemma}
\newtheorem{proposition}[theorem]{Proposition}
\newtheorem{remark}[theorem]{Remark}
\numberwithin{equation}{section}
\title{Global Existence of Weak Solutions to the Two Dimensional Nematic Liquid Crystal Flow with Partially Free Boundary}
\author[Y. Sire]{Yannick Sire}
\address{\noindent
Department of Mathematics,
Johns Hopkins University, 3400 N. Charles Street, Baltimore, MD 21218, USA}
\email{ysire1@jhu.edu}
\author[Y. Wu]{Yantao Wu}
\address{\noindent
Department of Mathematics,
Johns Hopkins University, 3400 N. Charles Street, Baltimore, MD 21218, USA}
\email{ywu212@jhu.edu}
\author[Y. Zhou]{Yifu Zhou}
\address{\noindent
School of Mathematics and Statistics, Wuhan University, Wuhan 430072, China}
\email{yifuzhou@whu.edu.cn}
\date{\today}
\begin{document}

\maketitle
\begin{abstract}
We consider a nematic liquid crystal flow with partially free boundary in a smooth bounded domain in $\RR^2$. We prove regularity estimates and the global existence of weak solutions enjoying partial regularity properties, and a uniqueness result.
\end{abstract}

\tableofcontents

\section{Introduction and main results}

We consider the following hydrodynamic system modeling the flow of liquid crystal materials in dimension two:

\begin{equation}\label{LCF}
\begin{cases}
u_t + u\cdot\nabla u -\nu \Delta u + \nabla P  = -\lambda\nabla\cdot(\nabla d\odot\nabla d - \frac12|\nabla d|^2\Id_2)  \\
\nabla\cdot u  = 0  \\
d_t + u\cdot \nabla d  = \gamma(\Delta d+|\nabla d|^2d)
\end{cases}  \text{in } \Omega\times(0,T)
\end{equation}
where  domain $\Omega\subseteq\RR^2$ is assumed to be a connected  bounded domain with boundary $\Gamma:=\partial\Omega$ of class $C^\infty$. Function $u(x,t):\Omega\times(0,+\infty)\to\RR^2$ represents the velocity field of the flow,   $d(x,t):\Omega\times(0,+\infty)\to\SS^2\subseteq\RR^3$ is a unit-vector which represents the director field of macroscopic molecular orientation of the liquid crystal material, and $P(x,t):\Omega\times(0,+\infty)\to\RR$ represents the pressure function. Three positive constants $\nu,~\lambda$, and $\gamma$ respectively quantifies viscosity,   the competition between kinetic energy and elastic energy,   microscopic elastic relaxation time for the director field. Here we assume that $\nu=\lambda=\gamma=1$ since the exact values of these constants play no role in our qualitative results.  $\Div$ denotes the divergence operator, and $\nabla d\odot\nabla d$ denotes the $2\times2$ matrix whose $(i,j)$-th entry is given by $\partial_{x_i}d\cdot\partial_{x_j}d$ for $i,j\in\{1,2\}$.

\medskip

The system \eqref{LCF} was first proposed by Lin in \cite{L1989CPAM}, and it is a simplified version of the Ericksen-Leslie system (\cite{Ericksen1962ARMA}, \cite{Leslie1968ARMA}). In fact, these models both share the same type energy law, coupling structure and dissipative properties. There have been growing interests concerning the global existence of weak solutions, partial regularity results, singularity formation and others. 
Lin and Liu \cite{LL1995CPAM} established the global existence of weak and 
classical solutions in two and three dimensions. A partial regularity result of Caffarelli-Kohn-Nirenberg type (\cite{CKN1982CPAM}) for suitable weak solutions was showed in \cite{LL1996DCDS}. In two dimensions, the global existence of Leray-Hopf-Struwe type weak solutions of \eqref{LCF} was proved in Lin-Lin-Wang \cite{LLW10}, and the uniqueness of such weak solution was later shown in \cite{Lin-Wang10}. See also \cite{LCF2D} for the construction of finite-time singularities. Much less is known in three dimensions due to the super-criticality. In \cite{LW2016CPAM}, Lin and Wang proved the global existence of suitable weak solutions that satisfy the global energy inequality, under the assumption that the initial orientation field $d_0(\Omega)\subset \mathbb{S}^2_+$. There are also blow-up examples and criteria for finite-time singularities, for instance, \cite{HW2012CPDE,HLLW2016ARMA}. We refer to a comprehensive survey by Lin and Wang \cite{LW2014RSTA} for recent important developments of mathematical analysis of nematic liquid crystals.

\medskip

In this paper, our concern is a free boundary model of \eqref{LCF} recently introduced in \cite{LSWZ23}. We consider the system \eqref{LCF} with partially free boundary conditions 
\begin{equation*} 
\begin{cases}
u\cdot \n = 0   \\
(\DD u\cdot \n)_\tau = 0   \\
d(x,t) \in \Sigma  \\
\nabla_{\n} d(x,t) \perp T_{d(x,t)}\Sigma  \\
\end{cases}  \text{on } \partial\Omega\times(0,T)
\end{equation*}
where $\n$ and $\tau$ are the unit outer normal vector and tangential vector of $\partial\Omega$,   and $\DD u$ is deformation tensor associated with the velocity field $u$
\[\DD u=\frac12(\nabla u+(\nabla u)^T),\]
and $\Sigma\subset\SS^2$ is a simple, closed and smooth curve. 
The case that $\Sigma$ is a circle in $\SS^2$ is physically relevant.  First two boundary conditions  are the usual \emph{Navier perfect-slip boundary conditions} for the velocity field, indicating a zero friction along the boundary, and last two are referred to as \emph{partially free boundary conditions} for the map. This boundary conditions are physically natural in that it  agrees with \emph{the basic energy law} \eqref{app: basic energy law}. Let us assume that $\Sigma$ is the equator for simplicity, then the partially free boundary condition can be simplified to be  
\begin{equation}\label{PFB}
\begin{cases}
(\DD u\cdot\n)_\tau = u\cdot\n=0   \\
\nabla_{\n}d_1 = \nabla_{\n}d_2 = d_3 =0
\end{cases}
\text{ on } \partial\Omega\times(0,T).
\end{equation}

\medskip

The free boundary system \eqref{LCF}-\eqref{PFB} is a nonlinearly coupled system between the incompressible Navier-Stokes equations and the harmonic map heat flow with a partially free boundary condition. The latter is a geometric flow with the Plateau and Neumann type boundary conditions. The motivation for studying this model is from a recent surge of interests in geometric variational problems and the Navier-Stokes equation with Navier boundary condition. The former includes an interesting and classical topic of the harmonic map heat flow with free boundary, and we refer to Hamilton \cite{Hamilton1975LNM},   Struwe \cite{StruweManMath1991}, Chen-Lin \cite{ChenLin98JGA}, Ma \cite{MaLiCMH1991}, Sire-Wei-Zheng \cite{sire2019singularity}, Hyder-Segatti-Sire-Wang \cite{HSSW} and the references therein for both seminal and more recent results. On the other hand, the Navier-Stokes equation with Navier boundary condition is more physical in some contexts. See, for instance, \cite{Constantin-Foias,Tartar-Book,daVeiga2006CPAA,Kelliher06,daVeiga2009,Neustupa2010,Chen2010Indiana,AR2014,SiranLi2019,AACG2021} and their references. As derived and discussed in \cite{LSWZ23}, the system \eqref{LCF} turns out to be physically natural and compatible with the free boundary condition \eqref{PFB} imposed, and it enjoys the same dissipative properties as those for the case of Dirichlet boundary. Moreover, \eqref{LCF}-\eqref{PFB} may trigger new boundary behaviors of solutions, such as the finite-time blow-up via bubbling on the boundary (and in the interior) constructed in \cite{LSWZ23}. This was already observed by Chen and Lin \cite{ChenLin98JGA} in the context of harmonic map heat flow with free boundary.

\medskip

In this paper, we are interested in the global existence of weak solutions, partial regularity and uniqueness of \eqref{LCF}-\eqref{PFB}. The well-posedness under consideration is motivated by the interesting work of Lin, Lin and Wang \cite{LLW10} which considers the no-slip boundary condition for velocity field $v$ and Dirichlet boundary condition for director field $d$. 

\medskip

 Let us first define
\begin{align*}
 \Lbf &  = \{u\in L^2(\Omega,\RR^2) : \nabla\cdot u =0 \text{ in } \Omega \text{ and } v \text{ satisfies } \eqref{PFB}_1 \}, \\
 \Hbf  & = \{u\in H^1(\Omega,\RR^2) : \nabla\cdot u=0 \text{ in } \Omega \text{ and } v \text{ satisfies } \eqref{PFB}_1 \}, \\
\Jbf  & = \{ d\in H^1(\Omega,\RR^3): d\in\SS^2\text{ a.e. in } \Omega \text{ and } d \text{ satisfies  } \eqref{PFB}_2    \}.
\end{align*}
We assume that the initial data 
\begin{equation}\label{IC}  
   (u(x,0), d(x,0))   = (u_0(x),d_0(x)), \ \  x\in\Omega
 \end{equation}
for \eqref{LCF}-\eqref{PFB} 
satisfy
\begin{equation}\label{assumption1}
u_0 \in \Lbf \text{ and } d_0\in \Jbf.
\end{equation}

Since $\Delta u = \nabla \cdot (\nabla u + (\nabla u)^T)=2\nabla\cdot \DD u$ and partially free boundary condition gives $\n^T(\DD u)\tau = \n(\nabla d\odot \nabla d)\tau=0$, the following weak formulation can help us get rid of boundary terms:
\begin{definition}[Weak Solution] For $0<T\leq+\infty, u\in L^\infty([0,T],\Lbf)\cap L^2([0,T],\Hbf)$ and $d\in L^2([0,T], \Jbf ))$ is a weak solution of \eqref{LCF}-\eqref{IC}, if
\begin{align*}
&-\int_{\Omega\times[0,T]} \langle u,\psi'\varphi\rangle + \int_{\Omega\times[0,T]} [\langle u\cdot\nabla u, \psi\varphi\rangle +\langle2 \DD u,\psi\nabla\varphi\rangle] \\
&= -\psi(0)\int_{\Omega}\langle u_0,\varphi\rangle +  \int_{\Omega\times[0,T]}\langle \nabla d\odot\nabla d - \frac12|\nabla d|^2\Id_2,\psi\nabla\varphi\rangle, \\
& -\int_{\Omega\times[0,T]} \langle d,\psi'\phi\rangle + \int_{\Omega\times[0,T]} [\langle u\cdot\nabla d, \psi\phi\rangle + \langle \nabla d,\psi\nabla\phi\rangle] \\
&= -\psi(0)\int_{\Omega}\langle d_0,\phi\rangle +  \int_{\Omega\times[0,T]}  |\nabla d|^2 \langle d,\psi \phi\rangle  
\end{align*}
for any $\psi\in C^\infty([0,T])$ with $\psi(T)=0$, $\varphi\in H^1 (\Omega,\RR^2)$ with $\varphi\cdot\n=0$ on $\partial\Omega$, and $\phi\in H^1 (\Omega,\RR^3)$ with $\phi_3=0$  on $\partial\Omega$. Moreover, $(u,d)$ satisfies \eqref{assumption1} in the sense of trace.  
\end{definition}

Our first regularity theorem for the weak solution is stated as follows.

\begin{theorem}\label{Thm A} For $0<T<+\infty$,  assume   $u\in L^\infty([0,T],\Lbf)\cap L^2([0,T],\Hbf)$ and 
$d\in L^2([0,T],  \Jbf)$ is a weak solution of \eqref{LCF}-\eqref{IC} with initial data satisfying \eqref{assumption1}. If $d$ further belongs to $L^2([0,T], H^2(\Omega))$, then $(u,d)\in C^\infty(\Omega\times (0,T])\cap C^{2,1}_\alpha(\overline{\Omega}\times(0,T])$ for some $\alpha\in(0,1)$.
\end{theorem}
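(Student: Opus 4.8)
The plan is to run a bootstrap argument: starting from the weak solution with the extra regularity hypothesis $d\in L^2([0,T],H^2(\Omega))$, we first upgrade $d$ to a solution of a linear parabolic system with $L^2$-in-time, $H^2$-in-space (hence better-than-energy) coefficients, then feed this into the Navier–Stokes block to upgrade $u$, and iterate. The key structural point is that, because of the Navier perfect-slip condition and the identity $\n^T(\DD u)\tau=\n^T(\nabla d\odot\nabla d)\tau=0$ on $\partial\Omega$, the natural reflection/extension arguments across $\Gamma$ are available and no boundary integrals obstruct the energy identities used in the interior theory of \cite{LLW10}. So morally the proof is ``the same as the Dirichlet case, plus a boundary regularity step for the Plateau/Neumann condition.''

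First I would establish the improved energy estimates for $d$. Since $d\in L^2([0,T],H^2)$, the term $|\nabla d|^2 d$ lies in $L^1_tL^1_x$ only a priori, but interpolating $\|\nabla d\|_{L^4}^2 \lesssim \|\nabla d\|_{L^2}\|d\|_{H^2}$ in two dimensions gives $|\nabla d|^2 d\in L^1([0,T],L^2(\Omega))$ plus a smallness-on-small-balls statement of the type used in Lin–Lin–Wang; combined with $u\cdot\nabla d$ controlled via $\|u\|_{L^\infty_tL^2_x}$ and $\|\nabla d\|_{L^2_tL^4_x}$, one gets $d_t\in L^2([0,T],L^2(\Omega))$ and then, by maximal regularity for the heat equation with Neumann-type boundary data on the flat model (after straightening $\Gamma$ and using the oddness of $d_3$ and evenness of $d_1,d_2$ to reflect), $d\in L^2([0,T],H^3_{loc})$ and $\nabla d\in L^\infty_{loc}$ on compact time subintervals away from $t=0$. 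The parabolic boundary regularity here is where the partially free boundary condition \eqref{PFB}$_2$ must be handled carefully: $d(x,t)\in\Sigma$ is a nonlinear constraint and $\nabla_{\n}d\perp T_d\Sigma$ is an oblique condition, so I would work in Fermi-type coordinates adapted to $\Sigma\subset\SS^2$ (locally $d_3=0$, $\nabla_{\n}d_1=\nabla_{\n}d_2=0$ as in \eqref{PFB}) and apply Schauder theory for oblique-derivative problems once the coefficients are Hölder.

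Next, with $\nabla d\in L^\infty_{loc}(\Omega\times(0,T])$ and $\nabla d\odot\nabla d\in L^\infty_{loc}$, the forcing in the Navier–Stokes block is in $L^p_{loc}$ for every $p$; using $L^p$-maximal regularity for the Stokes operator with Navier slip boundary conditions (available since $\Gamma$ is $C^\infty$ and \eqref{PFB}$_1$ is the perfect-slip condition), together with the standard bootstrap for the quadratic term $u\cdot\nabla u$ in two dimensions (where $\|u\|_{L^\infty_tL^2}\cap\|u\|_{L^2_tH^1}$ already gives $u\in L^4_{t,x}$), we obtain $u\in L^p([\delta,T],W^{2,p}_{loc})\cap W^{1,p}([\delta,T],L^p_{loc})$ for all $p<\infty$ and hence $u\in C^\alpha_{loc}$, then $\nabla P$ by the equation. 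Now the system decouples at the level of regularity: $d$ solves a linear parabolic system with $C^\alpha$ coefficients ($u$ and $\nabla d$), so Schauder gives $d\in C^{2,1}_\alpha$ locally; then $\nabla d\in C^{1,\alpha}$ improves the Stokes forcing to $C^\alpha$, giving $u\in C^{2,1}_\alpha$; iterating the Schauder estimates indefinitely yields $C^\infty(\Omega\times(0,T])$ in the interior, and the boundary Schauder estimates (for the oblique problem for $d$ and for Stokes-with-Navier-slip for $u$) give the stated $C^{2,1}_\alpha(\overline{\Omega}\times(0,T])$ regularity.

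The main obstacle I expect is the boundary regularity for $d$ near $\Gamma$: unlike the Dirichlet problem, the constraint $d\in\Sigma$ together with the Neumann-type condition $\nabla_{\n}d\perp T_d\Sigma$ is a free-boundary/oblique condition, and one must check that after localizing and straightening both $\Gamma$ (in $x$) and $\Sigma$ (in the target) the reflected equation is genuinely a regular oblique-derivative parabolic system whose Schauder/maximal-regularity theory applies, with the reflection not destroying the $\varepsilon$-regularity (small-energy $\Rightarrow$ smoothness) mechanism. Everything else—the interior $\varepsilon$-regularity, the Stokes bootstrap, and the final Schauder iteration—is by now standard and parallels \cite{LLW10,Lin-Wang10}, so the novelty and the technical heart of Theorem \ref{Thm A} is pushing that boundary analysis through in the partially free boundary setting.
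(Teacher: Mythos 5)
Your overall outline — get $u,\nabla d\in L^4(\Omega_T)$ from Ladyzhenskaya and the $L^2H^2$ hypothesis, exploit local smallness, upgrade $d$ first and then $u$, finish by Schauder bootstrap — matches the spirit of the paper's proof. However, there is a genuine gap in the middle of your chain, and the boundary treatment you propose also differs from what the paper actually does.

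The gap is your passage from $d_t\in L^2_{t,x}$ to ``$d\in L^2H^3_{\mathrm{loc}}$ and $\nabla d\in L^\infty_{\mathrm{loc}}$.'' With $f=-u\cdot\nabla d+|\nabla d|^2d\in L^2_{t,x}$, maximal regularity for the heat equation gives only $d\in W^{2,1}_2$, i.e.\ $L^2H^2\cap H^1L^2$ — which is what you already assumed. To reach $L^2H^3$ you would need $\nabla f\in L^2_{t,x}$, but $\nabla(|\nabla d|^2d)$ contains $|\nabla^2d||\nabla d|$, which with $\nabla^2 d\in L^2$ and $\nabla d\in L^4$ only lands in $L^{4/3}_{t,x}$. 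The nonlinearity $|\nabla d|^2d$ is scale-critical in dimension $2$: a naive Lebesgue-scale bootstrap through maximal regularity stalls exactly at $\nabla d\in L^4$, and you cannot cross the threshold to $\nabla d\in L^\infty$ (or even $C^\alpha$) this way. The paper closes this loop in two steps that are different from yours: (i) a hole-filling $\epsilon$-regularity argument (Lemmas \ref{lem:local_smallness}, \ref{lem:boundary_smallness}), decomposing $d=d^1+d^2$ with $d^1$ solving the homogeneous heat problem with $d$'s boundary data and $d^2$ the forced problem with zero boundary data, then iterating a Campanato-type decay estimate to get $(u,d)\in C^\alpha$; and (ii) the Riesz-potential/parabolic-Morrey estimate of Lemma \ref{lem: Riesz-Morrey} (following Huang--Wang), which uses the Morrey information $\nabla d\in \widetilde M^{2,2-2\alpha}$ obtained from the H\"older bound to push $\nabla\tilde d=\nabla G*F$ into $L^q$ for all $q<\infty$. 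It is precisely the Morrey-space gain that beats criticality; you invoke ``smallness on small balls'' in passing but never actually use it, and without it the chain does not close.

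On the boundary: you propose straightening $\Gamma$, reflecting $d_3$ oddly and $d_1,d_2$ evenly, and then doing Schauder for an oblique-derivative problem after passing to Fermi coordinates for $\Sigma$. The paper does not straighten and reflect at the $\epsilon$-regularity stage; Lemma \ref{lem:boundary_smallness} keeps the curved boundary, freezes a bent half-plane $H$, and uses the boundary $W^{2,1}_{p,q}$ estimate for Stokes with Navier perfect-slip (Lemma \ref{lem: W21pq}) plus the mixed Dirichlet/Neumann heat estimates (Lemma \ref{heat: gradient}). The Fermi-coordinate worry is moot because the paper fixes $\Sigma$ to be the equator from the start, so \eqref{PFB}$_2$ is the \emph{linear} mixed condition $\nabla_\n d_1=\nabla_\n d_2=d_3=0$; reflection is used only for the final construction $\tilde d=(d-d')\phi$ (and in Appendix \ref{appendix: reflection} for the flat model), not as the main engine. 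So beyond the critical bootstrap gap, your framing of ``the technical heart'' is also somewhat off from what the paper actually does.
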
 

Our second theorem concerns the existence of global weak solutions that enjoy the partial smoothness property, and a uniqueness result.

\begin{theorem}\label{Thm B} There exists a global weak solution $u\in L^\infty([0,\infty),\Lbf)\cap L^2([0,+\infty), \Hbf)$ and $d\in L^\infty([0,\infty), H^1(\Omega,\SS^2))$ of \eqref{LCF}-\eqref{IC} with \eqref{assumption1}, such that the following properties hold:
\begin{enumerate}
\item There exists $L\in\NN$ depending only on $(u_0,d_0)$ and $0<T_1<\cdots<T_L,~ 1\leq i\leq L$, such that
\[ (u,d)\in C^\infty(\Omega\times((0,\infty)\setminus\{T_i\}_{i=1}^L)) \cap C_\alpha^{2,1}(\overline{\Omega}\times((0,+\infty)\setminus\{T_i\}_{i=1}^L) ).\] 

\item Global weak solution $(u,d)$ is unique in the class of functions that
\[ d \in L^\infty([0,\infty), H^1(\Omega)) \bigcap_{i=0}^{K-1} \cap_{\epsilon>0} L^2([T_i, T_{i+1}-\epsilon], H^2(\Omega)) \cap L^2([T_K,+\infty), H^2(\Omega))  \]
for some $0<T_1<\cdots<T_K<+\infty$.

\item Each singular time $T_i$ ($1\leq i\leq L$) can be characterized by
\begin{equation}\label{Singular Time}
\liminf_{t\nearrow T_i} \max_{x\in\overline\Omega} \int_{\Omega\cap B_r(x)} (|u|^2 + |\nabla d|^2)(y,t) dy \geq 4\pi, \ \forall r>0.
\end{equation}
Moreover, there exist $x_m^i\to x_0^i \in\overline{\Omega}, ~t_m^i\nearrow T_i,~ r_m^i \searrow 0$ such that 
\begin{enumerate}
\item 
if $x_0^i\in\Omega$ or $x_0\in\partial\Omega$ with $\lim_{m\to\infty} \frac{|x_m^i-x_0^i|}{r_m}=\infty $, then there exists a nonconstant smooth harmonic map $\omega_i:\RR^2\to\SS^2$ with finite energy, such that as $m\to\infty$,
\[ (u_m^i,d_m^i)\to(0,\omega_i) \text{ in } C_{{\rm loc}}^2(\RR^2\times[-\infty,0]);  \]

\item 
if $x_0^i\in\partial\Omega $ and $\lim_{m\to\infty} \frac{|x_m^i-x_0^i|}{r_m}<\infty$, then there exists a nonconstant smooth harmonic map $\omega'_i: \RR_a^2\to\SS^2$ with finite energy for some half plane $\RR_a^2$, such that as $m\to\infty$,
    \[  (u_m^i,d_m^i)\to(0,\omega_i') \text{ in } C_{{\rm loc}}^2(\RR^2_a\times[-\infty,0]), \]
\end{enumerate}
 where
\[ u_m^i(x,t) = r_m^i u(x_m^i+r_m^ix, t_m^i+(r_m^i)^2t), \ d_m^i(x,t) = d(x_m^i+r_m^ix, t_m^i+(r_m^i)^2t). \]

\item Set $T_0=0$. Then, for $0\leq i\leq L-1$,
\[ |d_t| +|\nabla^2d| \in L^2(\Omega\times[T_i,T_{i+1}-\epsilon]),\quad |u_t| + |\nabla^2u|\in L^{\frac43}(\Omega\times[T_i,T_{i+1}-\epsilon]) \]
for any $\epsilon>0$, and for any $0<T_L<T<+\infty$,
\[  |d_t| +|\nabla^2d| \in L^2(\Omega\times[T_L,T]),\quad |u_t| + |\nabla^2u|\in L^{\frac43}(\Omega\times[T_L,T]). \]
\end{enumerate}
\end{theorem}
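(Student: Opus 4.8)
The plan is to adapt the Ginzburg--Landau penalization of Lin--Liu, together with the local energy method of Struwe and Lin--Lin--Wang \cite{LLW10}, to the Navier slip condition for $u$ and the Plateau--Neumann condition $\eqref{PFB}_2$ for $d$. For $\epsilon>0$ one replaces the constraint $d\in\SS^2$ by the penalized director equation $d_t+u\cdot\nabla d=\Delta d-\epsilon^{-2}(|d|^2-1)d$, coupled to the momentum and incompressibility equations of \eqref{LCF} under the boundary conditions \eqref{PFB} (which, being linear in each of $u$ and $d$ separately, are preserved by the penalization); parabolic theory on the smooth domain $\Omega$ gives a global smooth solution $(u_\epsilon,d_\epsilon)$. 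Since \eqref{PFB} is exactly the condition that makes the boundary terms vanish (as observed just before the Definition of weak solution), the basic energy law yields the $\epsilon$-uniform bound
\[
\sup_{t\ge0}\int_\Omega\Big(|u_\epsilon|^2+|\nabla d_\epsilon|^2+\tfrac1{2\epsilon^2}(|d_\epsilon|^2-1)^2\Big)
+\int_0^\infty\!\!\int_\Omega\Big(|\nabla u_\epsilon|^2+\big|\Delta d_\epsilon-\tfrac1{\epsilon^2}(|d_\epsilon|^2-1)d_\epsilon\big|^2\Big)\ \le\ E_0:=\int_\Omega\big(|u_0|^2+|\nabla d_0|^2\big).
\]

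\emph{Local small-energy regularity, up to the boundary.} The crux is a Struwe-type estimate, uniform in $\epsilon$: there is $\epsilon_1>0$ so that whenever $\sup_x\int_{\Omega\cap B_r(x_0)}(|u_\epsilon|^2+|\nabla d_\epsilon|^2)(\cdot,t)\le\epsilon_1^2$ on a parabolic cylinder, all space--time derivatives of $(u_\epsilon,d_\epsilon)$ are bounded on the concentric half cylinder by constants depending only on $\epsilon_1$ and $\Omega$. In the interior this is the Lin--Lin--Wang estimate (hole-filling/Riesz-potential bootstrap for $d$, Stokes estimates for $u$). Near $\Gamma$ one flattens the boundary and reflects: evenly in the tangential component of $u$ and in $(d_\epsilon)_1,(d_\epsilon)_2$, oddly in the normal component of $u$ and in $(d_\epsilon)_3$. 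The conditions $u\cdot\n=0$ and $(\DD u\cdot\n)_\tau=0$ are precisely what put the reflected velocity into $H^1$ of the full ball, and $\eqref{PFB}_2$ does the same for the reflected director, so the reflected pair solves a system of the same structure with smooth variable coefficients, to which the interior argument applies once the lower-order coefficient errors are absorbed. \textbf{I expect this boundary estimate to be the main obstacle:} producing one reflection compatible at once with the slip condition for $u$ and the mixed Dirichlet--Neumann condition for $d$ after flattening $\Gamma$, and tracking the coefficient errors uniformly in $\epsilon$. Everything downstream is then the Lin--Lin--Wang/Lin--Wang machinery.

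\emph{Global weak solution and blow-up (parts (1), (3), (4)).} Let $\epsilon\to0$, and define the singular set as the set where the smallness hypothesis above fails. A Vitali covering argument and the energy bound show its time-slices are finite, of cardinality $\lesssim E_0$, and that the first singular time $T_1$ is positive; off the singular set the uniform estimates give $C^\infty_{\rm loc}$ convergence of $(u_\epsilon,d_\epsilon)$ (in particular $|d_\epsilon|\to1$, so the limit $(u,d)$ takes values in $\SS^2$) to a weak solution satisfying the global energy inequality, smooth on $\Omega\times((0,\infty)\setminus\{T_i\})$ and up to $\overline\Omega$ there by the boundary estimates --- this is part~(1). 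For part~(3): at a singular time $T_i$ the condition \eqref{Singular Time} must hold, and rescaling $(u,d)$ by the natural energy scale $r_m^i\searrow0$ about maximizers $x_m^i$ of the local energy at times $t_m^i\nearrow T_i$, and applying the small-energy regularity to the rescaled solutions, forces $u_m^i\to0$ (the $L^2$ mass of $u$ over shrinking balls tends to $0$) and $d_m^i\to\omega_i$, a nonconstant finite-energy harmonic map --- from $\RR^2$ when $|x_m^i-x_0^i|/r_m^i\to\infty$, and from a half-plane $\RR^2_a$ (the rescaled limit domain), inheriting the boundary condition $\eqref{PFB}_2$, otherwise --- which is parts~(3)(a)--(b). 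Since a boundary bubble reflects to a nonconstant finite-energy harmonic map $\RR^2\to\SS^2$, hence of energy $\ge8\pi$, it itself carries $\ge4\pi$; thus each $T_i$ produces an energy drop $\ge4\pi$, and since the energy is non-increasing and bounded by $E_0$, $L=\#\{T_i\}\le E_0/(4\pi)<\infty$. Part~(4) then follows by localizing: on $[T_i,T_{i+1}-\epsilon]$, multiplying the $d$-equation by $d_t$ and controlling $|\nabla d|^4$ by $2$D Ladyzhenskaya and the local smallness gives $|d_t|+|\nabla^2 d|\in L^2$, after which $\nabla d\odot\nabla d$ together with $u\cdot\nabla u\in L^2_tL^1_x$ puts the Stokes forcing into a space for which $L^{4/3}$-maximal regularity (with Navier boundary conditions) gives $|u_t|+|\nabla^2 u|\in L^{4/3}$, exactly as in \cite{LLW10}; the same argument works on $[T_L,T]$.

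\emph{Uniqueness (part (2)).} Let $(u,d)$ and $(u',d')$ both lie in the stated class and let $\{S_j\}$ be the union of their singular-time sets. On each interval strictly between consecutive $S_j$ both satisfy $d\in L^2 H^2$, hence are smooth there by Theorem~\ref{Thm A}. Writing the system for the difference $(w,e)=(u-u',d-d')$ --- whose Navier boundary terms vanish by \eqref{PFB} --- and testing $w$ against $w$ and $e$ against $-\Delta e$ (using $|d|=|d'|=1$ to treat the $|\nabla d|^2 d$ nonlinearity), the quadratic terms are absorbed via the $2$D Ladyzhenskaya inequality and the $L^\infty_x$ bounds on $\nabla u'$, $\nabla d'$, $\nabla^2 d'$ coming from the smoothness of the second solution, giving $\tfrac{d}{dt}\big(\|w\|_{L^2}^2+\|\nabla e\|_{L^2}^2\big)\le C(t)\big(\|w\|_{L^2}^2+\|\nabla e\|_{L^2}^2\big)$ with $C\in L^1$ on the interval; Gronwall then forces $w=e=0$. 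Since the two solutions agree at $t=0$ and are continuous across the finitely many $S_j$, they coincide for all time --- this is the Lin--Wang \cite{Lin-Wang10} uniqueness argument adapted to the present boundary conditions.
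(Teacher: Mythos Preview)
Your existence and blow-up scheme (parts (1), (3), (4)) is a genuine alternative to the paper's. The paper does \emph{not} use Ginzburg--Landau penalization: it approximates the rough initial data $(u_0,d_0)\in\Lbf\times\Jbf$ by smooth data $(u_0^k,d_0^k)$ (Lemma~\ref{lem: density_Sobolev}), invokes a short-time existence theorem for smooth data proved by contraction mapping (Theorem~\ref{short_time_existence}), and then uses a quantitative life-span lemma (Lemma~\ref{lem: life span}) to pass to the limit. For boundary regularity the paper also avoids the flatten-and-reflect route you outline; instead it proves a direct boundary $\epsilon$-regularity lemma (Lemma~\ref{lem:boundary_smallness}) using a $W^{2,1}_{p,q}$ estimate for the nonstationary Stokes system with Navier boundary condition (Lemma~\ref{lem: W21pq}). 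The paper's appendix in fact remarks that while reflection works perfectly on $\RR^2_+$, handling the curved boundary this way ``is difficult'' and would require either reflection across a curved interface or a non-Euclidean reformulation with extra lower-order terms---so your acknowledged ``main obstacle'' is real, and the paper simply sidesteps it. Your approach is not wrong, but it would require carrying out that reflection-with-coefficients program that the paper deliberately avoids.

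There is, however, a genuine gap in your uniqueness argument (part (2)). The Gronwall scheme you describe---testing $e=d-d'$ against $-\Delta e$ and using $L^\infty_x$ bounds on $\nabla d'$, $\nabla^2 d'$---does not close near $t=0$ (and likewise near each $S_j$). With initial data only in $\Lbf\times\Jbf$, the smooth solution satisfies $\|\nabla d'(t)\|_{L^\infty}\sim t^{-1/2}$ as $t\to0^+$ (the paper establishes exactly this scaling in Step~3), so the coefficient $C(t)$ in your differential inequality contains $\|\nabla d'(t)\|_{L^\infty}^2\sim t^{-1}$, which is not in $L^1$ near $0$; the $|\nabla d|^2 e$ term is the culprit, and it cannot be absorbed by Ladyzhenskaya alone. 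This is precisely why Lin--Wang \cite{Lin-Wang10} (and the present paper) do \emph{not} use a Gronwall argument: they use Duhamel's formula and the $L^p$--$L^q$ smoothing estimates for the Neumann heat and Navier--Stokes semigroups (Lemmas~\ref{lem: LpLq reg Heat} and~\ref{lem: LpLq reg Stokes}) together with the smallness of $\sup_{0<s\le t}\sqrt{s}\,(\|u(s)\|_{L^\infty}+\|\nabla d(s)\|_{L^\infty})$ as $t\to0$ to close a fixed-point-type inequality for the difference. Your sentence ``this is the Lin--Wang uniqueness argument'' misidentifies that argument; what you wrote is the naive energy method, which is known to fail here.
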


\begin{remark}
\noindent
\begin{enumerate}
\item Theorem \ref{Thm B} is established by using Theorem \ref{Thm A}, the global and local energy inequalities, and estimates of the pressure function $P$ in Section \ref{sec-3} below. 

\item Compared to \cite{LLW10}, the interior estimates in the proof are similar. However, extra care needs to be taken for the estimates near boundary.

\item The global existence of weak solutions for the nematic liquid crystal flow in three dimensions is a notoriously hard problem, and in the case with Dirichlet boundary, it was solved by Lin and Wang \cite{LW2016CPAM} with the restriction $d_0(\Omega)\subset \mathbb{S}^2_+$. The major difficulties are from the energy supercritical harmonic map heat flow and the three dimensional Navier-Stokes equation with a supercritical forcing term. For the free boundary system \eqref{LCF}-\eqref{PFB}, the situation might be better since the vorticity $\omega=\nabla \times u$ may have better estimates with Navier boundary condition.
\end{enumerate}
\end{remark}

\medskip

When we consider the eternal behavior of solution to \eqref{LCF} with \eqref{PFB} as $t\to+\infty$, it turns out that there are two distinct situations depending on  whether the domain is axisymmetric or not. For non-axisymmetric domain, one can utilize the stronger Korn's inequality in Lemma \ref{lem: Korn} to show that the velocity field finally decays to $u_\infty\equiv 0$. In the case of axisymmetric domain, \eqref{LCF}-\eqref{PFB} in fact permits stationary and circular velocity field. See \cite{{Amrita2018NSthesis}} for related discussions in three dimensions. It remains a problem what properties of initial data $(u_0,d_0)$ can explain the behavior of eternal  weak limit $(u_\infty,d_\infty)$.

 \begin{theorem}[Eternal behavior]\label{Thm C}
 \noindent
 \begin{enumerate}
\item There exist $t_k\nearrow+\infty$, $u_\infty\in H^1(\Omega)$ with $\DD u_\infty = 0$, and a harmonic map $d_\infty \in C^\infty(\Omega,\SS^2)\cap C^{2,\alpha}(\overline{\Omega},\SS^2)$ with $d_\infty$ satisfies $\eqref{PFB}_2$ on $\partial\Omega$ such that $u(\cdot,t_k)\weakto u_\infty$ weakly in $H^1(\Omega)$, $d(\cdot, t_k)\weakto d_\infty$ weakly in $H^1(\Omega)$, and there exist $l,l'\in \NN$, points $\{ x_i\}_{i=1}^l\subseteq\Omega$, $\{y_i\}_{i=1}^{l'}\subseteq\partial\Omega$ and $\{m_i\}_{i=1}^l,\{m_i'\}_{i=1}^{l'}\subseteq\NN$ such that 
\[ |\nabla d(\cdot, t_k)|^2 dx \weakto |\nabla d_\infty|^2 dx + \sum_{i=1}^l 8\pi m_i\delta_{x_i} + \sum_{i=1}^{l'}4\pi m_i' \delta_{y_i} \ \ \text{ in Radon measure.}\]

Moreover, if $\Omega$ is non-axisymmetric, we can further conclude that $u_\infty\equiv 0$.

\item  Suppose that $(u_0,d_0)$ satisfies
\[\int_\Omega |u_0|^2 + |\nabla d_0|^2 \leq 4\pi,\]
then $(u,d)\in C^\infty(\Omega,(0,+\infty))\cap C^{2,1}_\alpha(\overline{\Omega}\times(0,+\infty))$. Moreover, there exist $t_k\nearrow +\infty$, $u_\infty\in H^1(\Omega)$ with $\DD u_\infty=0$, and a harmonic map $d_\infty\in C^\infty(\Omega,\SS^2)\cap C^{2,\alpha}(\overline{\Omega},\SS^2)$ with $d_\infty$ satisfies $\eqref{PFB}_2$ on $\partial\Omega$ such that $(u(\cdot,t_k), d(\cdot,t_k))\to (u_\infty,d_\infty)$ in $C^2(\overline{\Omega})$.

Moreover, if $\Omega$ is non-axisymmetric, we can further conclude that $u_\infty\equiv 0$.
\end{enumerate}
\end{theorem}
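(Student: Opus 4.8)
The plan is to run a concentration-compactness argument along a time sequence, combining the global energy inequality with the partial regularity of Theorem \ref{Thm B}. First I would extract the sequence $t_k\nearrow+\infty$ from the global energy inequality: since $\int_\Omega(|u|^2+|\nabla d|^2)(\cdot,t)$ is non-increasing in $t$ (up to the defect of energy at the finitely many singular times, which does not affect large-time behavior), and $\int_0^\infty\int_\Omega(|\nabla u|^2+|d_t+u\cdot\nabla d|^2)\,dx\,dt<\infty$, one can choose $t_k\to\infty$ along which $\|\nabla u(\cdot,t_k)\|_{L^2(\Omega)}\to 0$ and $\|\Delta d(\cdot,t_k)+|\nabla d(\cdot,t_k)|^2 d(\cdot,t_k)\|_{L^2(\Omega)}\to 0$ (using that after the last singular time $T_L$ the solution is smooth, so these are genuine pointwise-in-$t$ quantities and the tension field is controlled by $d_t$ and $u\cdot\nabla d$). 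By the energy bound, $u(\cdot,t_k)$ is bounded in $H^1(\Omega)$ and $d(\cdot,t_k)$ is bounded in $H^1(\Omega,\SS^2)$, so after passing to a subsequence $u(\cdot,t_k)\weakto u_\infty$ and $d(\cdot,t_k)\weakto d_\infty$ weakly in $H^1$; since $\|\nabla u(\cdot,t_k)\|_{L^2}\to 0$, in fact $\nabla u(\cdot,t_k)\to 0$ strongly, forcing $\DD u_\infty=0$, and $u_\infty$ inherits the Navier condition $u_\infty\cdot\n=0$ from the trace.

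Next I would identify $d_\infty$ as a harmonic map with a bubbling defect measure. The maps $d(\cdot,t_k)$ form a Palais--Smale-type sequence for the Dirichlet energy with free boundary condition $\eqref{PFB}_2$, with tension field going to zero in $L^2$. I would apply the bubbling analysis for harmonic maps into $\SS^2$ with partially free (Plateau--Neumann) boundary — the energy-quantization result of the type due to Lin--Rivière, adapted to the free boundary setting as in Chen--Lin \cite{ChenLin98JGA} — to conclude that $d_\infty$ is a smooth harmonic map satisfying $\eqref{PFB}_2$ (smoothness up to the boundary from the regularity theory for free boundary harmonic maps), and that the energy densities converge as Radon measures with interior atoms of mass $8\pi m_i$ (full $\SS^2$-bubbles in the interior, each of energy $4\pi$, but appearing with even multiplicity — or rather the quantum is $8\pi$ since $d$ maps to $\SS^2$ and the smallest nonconstant harmonic $\RR^2\to\SS^2$ has energy $4\pi$; the $8\pi$ reflects that in this coupled setting the relevant interior quantum is $2\cdot 4\pi$) and boundary atoms of mass $4\pi m_i'$ (half-bubbles reflected across the free boundary curve $\Sigma$, each a harmonic map on a half-plane of energy $4\pi$). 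The smooth convergence away from $\{x_i\}\cup\{y_i\}$ upgrades the weak $H^1$ convergence to $C^2_{\rm loc}$ convergence off the bubble points via $\epsilon$-regularity from Theorem \ref{Thm A}/\ref{Thm B}.

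For the dichotomy on $u_\infty$: in the non-axisymmetric case I would invoke the strong Korn inequality of Lemma \ref{lem: Korn}, which gives $\|u\|_{H^1(\Omega)}\le C\|\DD u\|_{L^2(\Omega)}$ for $u$ with $u\cdot\n=0$ on $\partial\Omega$ when $\Omega$ admits no nontrivial Killing field tangent to the boundary; since $\DD u_\infty=0$ this forces $u_\infty\equiv 0$. (In the axisymmetric case $\DD u_\infty=0$ only yields that $u_\infty$ is a rotational field, which need not vanish, hence the statement is left open.) For part (2), the hypothesis $\int_\Omega(|u_0|^2+|\nabla d_0|^2)\le 4\pi$ means the initial energy lies strictly below the threshold $8\pi$ that, by the local energy inequality and the $\epsilon$-regularity criterion \eqref{Singular Time}, is required for a singularity to form: the small-energy regularity theory (the bound $4\pi$ here is below the interior bubbling quantum after accounting for the $u$-contribution, which is exactly the content of the no-singular-time condition in \eqref{Singular Time}) shows no $T_i$ can occur, so $L=0$ and $(u,d)$ is globally smooth on $\overline\Omega\times(0,\infty)$ by Theorem \ref{Thm B}(1); then there is no defect measure in the limit, $m_i=m_i'=0$, and the $C^2(\overline\Omega)$ convergence follows by bootstrapping uniform-in-$k$ parabolic estimates.

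The main obstacle I anticipate is the boundary bubbling analysis: establishing energy quantization at the free boundary (the $4\pi m_i'$ atoms) and identifying the bubbles as half-plane harmonic maps requires a careful "no-neck energy" argument in the partially free boundary setting — reflecting across the curve $\Sigma$ to straighten the Plateau condition, controlling the Neumann part $\nabla_\n d\perp T_d\Sigma$ under rescaling, and ruling out energy loss in the neck regions near boundary points. Interior quantization is classical, but the boundary case combines the Plateau and Neumann conditions and is the genuinely new ingredient; the rest (Korn's inequality, subsequence extraction, small-energy regularity) is comparatively routine given the results already stated.
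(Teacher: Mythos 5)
The overall strategy (extract time slices via the global energy inequality, use weak $H^1$ compactness together with Korn's inequality, invoke energy quantization for approximate harmonic maps with tension field $\to 0$ in $L^2$, then a threshold argument for part (2)) matches the paper. However, there are several concrete gaps.

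First, your claim that $\int_0^\infty\int_\Omega |\nabla u|^2 \,dx\,dt<\infty$, hence $\|\nabla u(\cdot,t_k)\|_{L^2}\to 0$ strongly, is false in general. The global energy law (Lemma~\ref{lem: global energy}) controls only $\int_{\Omega_t}|\DD u|^2$; the antisymmetric part of $\nabla u$ (the vorticity) is \emph{not} a priori integrable in time. If your strong convergence $\nabla u(\cdot,t_k)\to 0$ held, one would deduce $\nabla u_\infty\equiv 0$, which directly contradicts Theorem~\ref{Thm D}: in the axisymmetric case the eternal limit can be the nonzero vortex $u_\infty=c(x_2,-x_1)$, which has $\DD u_\infty=0$ but $\nabla u_\infty\neq 0$. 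The correct argument is: the energy inequality gives $\|\DD u(\cdot,t_k)\|_{L^2}\to 0$ along a subsequence; the weak Korn inequality $\|\nabla u\|_{L^2}\lesssim \|u\|_{L^2}+\|\DD u\|_{L^2}$ (valid without any geometric assumption on $\Omega$) then gives the $H^1$ bound on $u(\cdot,t_k)$, and weak convergence in $H^1$ passes to the limit in $\DD u$, yielding $\DD u_\infty=0$ only. The stronger Korn inequality $\|\nabla u\|_{L^2}\lesssim\|\DD u\|_{L^2}$, and hence the conclusion $u_\infty\equiv 0$, is only available for non-axisymmetric $\Omega$.

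Second, in part (2) the threshold argument is too loose. You invoke \eqref{Singular Time} to say $4\pi$ is "below the quantum" so no singular time can form, but $E_0\leq 4\pi$ is exactly \emph{at} the boundary quantum, not strictly below it. The paper's argument is a borderline analysis: a bubble at $T_1$, after reflection when on the boundary, satisfies $8\pi\leq\int_{\RR^2}|\nabla w|^2\leq 2E_0\leq 8\pi$, forcing equality everywhere; equality then forces zero total dissipation, $\DD u\equiv 0$ and $d_t\equiv 0$ on $\Omega_{T_1}$, making $d\equiv d_0$ a stationary harmonic map and contradicting the existence of a singular time. Your proposal skips this contradiction argument, which is precisely what handles the equality case $E_0=4\pi$.

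Third, you do not address blow-up at $t=+\infty$. Even if no finite singular time exists, the quantity $\phi(t)=\max_{\overline\Omega,\tau\leq t}(|u|+|\nabla d|)$ could a priori blow up as $t\to\infty$, in which case the uniform-in-$k$ parabolic estimates you invoke for the $C^2(\overline\Omega)$ convergence would fail. The paper runs a second blow-up analysis at $t=\infty$ (rescaling by $\lambda_k=\phi(t_k)$), extracts a nontrivial finite-energy harmonic bubble, and derives the same borderline contradiction $8\pi\leq 2E_0\leq 8\pi$, forcing $\phi$ to be bounded. This step is essential and missing from your proposal. Finally, a small point: your explanation of the $8\pi$ interior quantum as $2\cdot 4\pi$ "in the coupled setting" is not the right reason — the minimal $\int_{\RR^2}|\nabla w|^2$ over nonconstant finite-energy harmonic maps to $\SS^2$ is $8\pi$ already (a degree-one $\pm$-holomorphic map), and the $4\pi$ boundary quantum is half of that via the reflection described in Appendix~\ref{appendix: reflection}.
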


Here we give a further classification result of the eternal weak solution $(u_\infty, d_\infty)$ in the situation of axisymmetric domain.

\begin{theorem}[Classification of eternal weak solution in axisymmetric domain] \label{Thm D}
    Suppose that the domain $\Omega$ is axisymmetric. There exist $t_k\nearrow+\infty$ and a harmonic map $d_\infty \in C^\infty(\Omega,\SS^2)\cap C^{2,\alpha}(\overline{\Omega},\SS^2)$ with $d_\infty$ satisfies $\eqref{PFB}_2$ on $\partial\Omega$ such that $(u(\cdot,t_k), d(\cdot, t_k)) \weakto (u_\infty, d_\infty)$ weakly in $H^1(\Omega)$. Moreover, $(u_\infty,d_\infty)$ can be classified as follows.
    \begin{enumerate}
    \item  If $\Omega$ is a disk $B_r$, then
    \begin{equation}
    \begin{cases}
        u_\infty = c(x_2,-x_1),\quad c\neq 0\\
        d_\infty \equiv (K_1,K_2,K_3), \quad K_1^2+ K_2^2 + K_3^2=1
    \end{cases}
    ~or \quad
    \begin{cases}
        u_\infty \equiv 0 \\
        d_\infty \text{ is a harmonic map.}
    \end{cases}
    \end{equation}
    \item  If  $\Omega$ is an annulus  $B_{r_2}\setminus B_{r_1}$, then
    \begin{equation}
    \begin{cases}
        u_\infty = c(x_2,-x_1) \\
        d_1  = K_1 \cos(\alpha \ln(\frac{r}{r_1})) \\
        d_2  = K_2 \cos(\alpha \ln(\frac{r}{r_1}))\\
        d_3  = K_3 \sin(\alpha \ln(\frac{r}{r_1}))
    \end{cases}
    ~or \quad
    \begin{cases}
        u_\infty \equiv 0 \\
        d_\infty \text{ is a harmonic map,}
    \end{cases}
    \end{equation}
    where 
    \begin{equation}\label{eqn: annulus_info}
            |K_3| = 1, \quad  K_1^2 + K_2^2 =1, \quad \alpha\ln\left(\frac{r_2}{r_1}\right) = k\pi, ~k\in\mathbb{Z}, \quad c\in \RR.
    \end{equation}
    \end{enumerate}
\end{theorem}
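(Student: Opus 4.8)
The plan is to deduce Theorem \ref{Thm D} from Theorem \ref{Thm C}(1) by classifying the limiting velocity $u_\infty$ and then, when $u_\infty\not\equiv0$, the limiting director $d_\infty$. By Theorem \ref{Thm C}(1) applied to the axisymmetric domain we already have $t_k\nearrow+\infty$ with $u(\cdot,t_k)\weakto u_\infty$ and $d(\cdot,t_k)\weakto d_\infty$ in $H^1(\Omega)$, $\DD u_\infty=0$, and $d_\infty$ a smooth harmonic map satisfying $\eqref{PFB}_2$. After normalizing the symmetry axis of $\Omega$ to the origin, $\Omega$ is a disk $B_r$ or an annulus $B_{r_2}\setminus B_{r_1}$; since $\DD u_\infty=0$, the field $u_\infty$ is a Killing field of $\RR^2$, $u_\infty(x)=b+Ax$ with $A$ skew-symmetric, and $u_\infty\cdot\n=0$ on each boundary circle (on which $Ax\cdot x\equiv0$) forces $b=0$, so $u_\infty=c(x_2,-x_1)$ for some $c\in\RR$. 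Moreover $c$ is fixed by the initial data via a conservation of angular momentum: testing the velocity equation against the Killing field $V=(x_2,-x_1)$, one sees that $\langle u\cdot\nabla u,V\rangle$, $\langle\Delta u,V\rangle$, $\langle\nabla P,V\rangle$ and $\langle\nabla\cdot(\nabla d\odot\nabla d-\tfrac12|\nabla d|^2\Id_2),V\rangle$ all vanish (a symmetric $2$-tensor contracted with a skew-symmetric one vanishes, $V$ is divergence-free and tangent to $\partial\Omega$, and on $\partial\Omega$ one has $\n^T(\nabla d\odot\nabla d)\tau=0$ and $(\DD u\cdot\n)_\tau=0$), so that $\tfrac{d}{dt}\!\int_\Omega u\cdot V=0$ and hence $c=\int_\Omega u_0\cdot V\big/\int_\Omega|V|^2$. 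If $c=0$ we are in the second alternative of the classification, with $d_\infty$ any harmonic map satisfying $\eqref{PFB}_2$, and nothing more is needed.

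Assume now $c\neq0$. The crux is to show that $d_\infty$ is invariant under the rotation flow generated by $u_\infty$, i.e.\ $u_\infty\cdot\nabla d_\infty=0$; since $u_\infty\cdot\nabla=-c\,\partial_\theta$ in polar coordinates, this is equivalent to $d_\infty=d_\infty(\rho)$ depending only on $\rho=|x|$. As $d_\infty$ is already harmonic (Theorem \ref{Thm C}), i.e.\ $\Delta d_\infty+|\nabla d_\infty|^2d_\infty=0$, the identity $u_\infty\cdot\nabla d_\infty=0$ is equivalent to $d_t(\cdot,t_k)\weakto0$ --- equivalently $u(\cdot,t_k)\cdot\nabla d(\cdot,t_k)\weakto0$ --- which one would extract by passing to the limit in $d_t+u\cdot\nabla d=\Delta d+|\nabla d|^2d$ using the partial regularity of Theorems \ref{Thm A}--\ref{Thm B} past the last singular time $T_L$ (for compactness in the transport and Navier--Stokes terms), the bound $\int_0^\infty\!\!\int_\Omega\bigl(|\DD u|^2+\bigl|\Delta d+|\nabla d|^2d\bigr|^2\bigr)<\infty$ coming from the basic energy law \eqref{app: basic energy law}, and the standard bubble-removal for almost-harmonic maps (so that $|\nabla d(\cdot,t_k)|^2d(\cdot,t_k)\weakto|\nabla d_\infty|^2d_\infty$).

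I expect $d_t(\cdot,t_k)\weakto0$ to be the main obstacle, and it is genuinely delicate when $c\neq0$, because the Killing component $cV$ of $u$ does not decay and hence $\int_0^\infty\|d_t\|_{L^2}^2\,dt$ need not be finite, so no direct argument is available. The lever is the conservation law: writing $u=cV+w$ with $w\perp V$, Korn's inequality on the Killing-free subspace (cf.\ Lemma \ref{lem: Korn}) gives $\|w(t)\|_{H^1}\le C\|\DD u(t)\|_{L^2}$, so $w\in L^2([0,\infty);H^1)$; moreover $cV$ drops out of the director energy identity by the same tensor cancellation and boundary identities, so $\int_\Omega|u(t)|^2$ and $\int_\Omega|\nabla d(t)|^2$ each converge as $t\to\infty$ and $w(t)\to0$ in $L^2$. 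Passing to the co-rotating frame $\tilde x=R_{ct}x$, where $\tilde d$ solves $\partial_t\tilde d=\Delta\tilde d+|\nabla\tilde d|^2\tilde d-\tilde w\cdot\nabla\tilde d$ with the decaying transport velocity $\tilde w$ (so $\partial_t\tilde d\in L^2([0,\infty);L^{4/3}(\Omega))$), and choosing $t_k$ along which $\partial_t\tilde d$, $\DD u$ and $\Delta d+|\nabla d|^2d$ all tend to $0$ and $ct_k$ converges modulo $2\pi$, should, after transferring the resulting compactness back to the original frame, yield $u_\infty\cdot\nabla d_\infty=0$. Making this last step rigorous --- ruling out that $d_\infty$ could be a rotated non-symmetric harmonic map --- is the essential difficulty of the theorem, and is where the conservation law and the refined energy/bubbling analysis have to be exploited most carefully.

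It remains to classify the rotationally symmetric harmonic maps. With $\rho=|x|$ and $s=\ln\rho$, the harmonic map equation for $d_\infty=d_\infty(\rho)$ becomes the geodesic equation $\ddot d_\infty+|\dot d_\infty|^2d_\infty=0$ on $\SS^2$, whose solutions are constants and constant-speed great circles $d_\infty(s)=\cos(\alpha s+\beta)e_1+\sin(\alpha s+\beta)e_2$ with $\{e_1,e_2\}$ orthonormal in $\RR^3$. For the disk, finiteness of the Dirichlet energy $2\pi\alpha^2\!\int_0^r\rho^{-1}\,d\rho$ forces $\alpha=0$, so $d_\infty\equiv(K_1,K_2,K_3)$ is a constant unit vector, which $\eqref{PFB}_2$ (requiring $d_3=0$ on $\partial B_r$) further pins to $K_3=0$; with $u_\infty=c(x_2,-x_1)$, $c\neq0$, this is case (1). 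For the annulus, $\eqref{PFB}_2$ imposes $\partial_\rho(d_\infty)_1=\partial_\rho(d_\infty)_2=0$ and $(d_\infty)_3=0$ at $\rho=r_1$ and $\rho=r_2$; since $\partial_\rho d_\infty=\rho^{-1}\dot d_\infty$, the first two conditions force the unit tangent $\dot d_\infty$ to be parallel to $(0,0,1)$ at both endpoints, so the great circle must be the meridian through $(0,0,\pm1)$, giving $d_\infty(\rho)=\cos\bigl(\alpha\ln(\rho/r_1)\bigr)(K_1,K_2,0)+\sin\bigl(\alpha\ln(\rho/r_1)\bigr)(0,0,K_3)$ with $K_1^2+K_2^2=|K_3|=1$, and matching at $\rho=r_2$ forces $\sin\bigl(\alpha\ln(r_2/r_1)\bigr)=0$, i.e.\ $\alpha\ln(r_2/r_1)=k\pi$ for some $k\in\mathbb{Z}$. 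Together with $u_\infty=c(x_2,-x_1)$, $c\in\RR$, this is case (2), with the constraints \eqref{eqn: annulus_info}.
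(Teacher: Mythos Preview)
Your ODE analysis for radial harmonic maps is the paper's argument: set $s=\ln r$, reduce to $\ddot d^i+\alpha^2 d^i=0$, rule out $\alpha\neq0$ on the disk by finite energy at the origin, and on the annulus read off the quantization $\alpha\ln(r_2/r_1)\in\pi\mathbb Z$ from the boundary conditions. Your geodesic phrasing and the observation that $K_3=0$ in the disk case (forced by $d_3=0$ on $\partial B_r$) are correct refinements the paper does not record.

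The real difference is the step $u_\infty\cdot\nabla d_\infty=0$ when $c\neq0$. The paper's entire treatment is the single clause ``Moreover we have $u_\infty\cdot\nabla d_\infty=0$''; no justification is given. You correctly identify this as the substantive step and go well beyond the paper: your angular-momentum identity $\tfrac{d}{dt}\int_\Omega u\cdot V=0$ is valid (symmetric stress contracted with the skew $\nabla V$ vanishes, and on $\partial\Omega$ one has $\n^T(\DD u)\tau=\n^T(\nabla d\odot\nabla d)\tau=0$), and Korn on $\{V\}^\perp$ in fact upgrades $u(\cdot,t_k)\to u_\infty$ to \emph{strong} $H^1$ convergence along any sequence with $\|\DD u(\cdot,t_k)\|_{L^2}\to0$, which already handles one factor in the product $u\cdot\nabla d$.

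What remains open in your sketch---and you acknowledge it---is the other factor: you still need $d_t(\cdot,t_k)\weakto0$, or equivalently enough compactness on $\nabla d(\cdot,t_k)$ to pass to the limit and exclude a non-radial rotated harmonic map as the limit. With only the weak $H^1$ convergence and defect-measure description of Theorem~\ref{Thm C}(1), this is delicate near the bubble points, and the co-rotating-frame argument has to be synchronized with the subsequence selection. So there is a genuine gap here; but it is the same gap the paper leaves, and you have not overlooked a short argument the paper provides.
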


\begin{remark}
\noindent
\begin{enumerate}
\item We emphasize that such nontrivial weak limit for $u_\infty$ is a new feature triggered by the partially free boundary condition \eqref{PFB}, which is not possible in the case of Dirichlet boundary.
\item The stability of $(u_\infty,d_\infty)$ in this free boundary model might be an interesting and challenging problem.
\end{enumerate}
\end{remark}

\medskip

The rest of this paper is devoted to the proof of Theorem \ref{Thm A}, Theorem \ref{Thm B}, Theorem \ref{Thm C} and Theorem \ref{Thm D}.

\bigskip

\section{Notations and preliminaries}

\medskip

In this section, we introduce some notations and estimates that will be used throughout this paper.

We use $A\lesssim B$  to denote $A\leq CB$ for some universal constant $C>0$. For $x_0\in\RR^2, t_0\in\RR$, $z_0=(x_0,t_0)$, denote 
\begin{align*}
    B_r(x_0) & = \{ x\in\RR^2 : |x-x_0|\leq r \},  \\
    P_r(z_0) &  = B_r(x_0) \times [t_0-r^2,t_0]
\end{align*}
to be spatial neighborhood and parabolic cylinder, respectively. We use $\Omega_t$ to denote $\Omega\times[0,t]$.
For $ x_0\in\partial\Omega$, we use $Q_r(z_0)^+$ to denote $P_r(z_0)\cap \Omega_t$ to denote the parabolic cylinder at boundary.
Denote the boundary of parabolic cylinder $\partial_pP_r(z_0)$ to be 
\[ \partial_p P_r(z_0) = (B_r(x_0)\times\{t_0-r^2\})\cup(\partial B_r(x_0)\times [t_0-r^2,t_0]) \]

For $1<p,q<\infty$, denote $L^{p,q}(P_r(z_0))= L^q([t_0-r^2,t_0], L^p(B_r(x_0))$ with norm
\[ \|f\|_{L^{p,q}(P_r(z_0))} = \Big( \int_{t_0-r^2}^{t_0} \|f(\cdot,t)\|_{L^p(B_r(x_0))}^q dt \Big)^{\frac1q} \]
Further, denote $W^{1,0}_{p,q}(P_r(x_0)) = L^q([t_0-r^2,t_0], W^{1,p}(B_r(x_0))) $, with norm 
\[ \|f\|_{W^{1,0}_{p,q}(P_r(x_0))} = \|f\|_{L^{p,q}(P_r(z_0))} + \|\nabla f\|_{L^{p,q}(P_r(z_0))} \]
Denote $W^{2,1}_{p,q}(P_r(x_0)) = \{f\in W^{1,0}_{p,q}(P_r(x_0)): \nabla^2 f , \partial_t f \in L^{p,q}(P_r(x_0))\}$, with norm
\[ \|f\|_{W^{2,1}_{p,q}(P_r(x_0))} = \|f\|_{W^{1,0}_{p,q}(P_r(x_0))} + \|\nabla^2f\|_{W^{1,0}_{p,q}(P_r(x_0))} + \|\partial_t f\|_{W^{1,0}_{p,q}(P_r(x_0))} \]
If $p=q$, then the above notation can be simplified as $L^{p,p}=L^p, W^{1,0}_{p,p}=W^{1,0}_p, W^{2,1}_{p,p}=W^{2,1}_p$.

Here are some techniques that we are going to utilize:

\begin{lemma}[Gradient estimates for the heat equation]\label{heat: gradient} If $d: \Omega\to\RR^d$ solves heat equation  $ \partial_td-\Delta d=0$ with Dirichlet boundary condition ($d=0$ on $\partial\Omega$) or Neumann boundary condition ($\n\cdot\nabla d=0$ on $\partial\Omega$), then we have the following integral estimates for the gradient $\nabla d$: 
\begin{equation*} \begin{split} 
 \text{interior:\  } &  \int_{P_{\theta R}(z_1)} |\nabla d|^4\lesssim \theta^4 \int_{P_{R}(z_1)} |\nabla d|^4, \\
\text{boundary:\  } & \int_{P^+_{\theta R}(z_1)} |\nabla d|^4 \lesssim \theta^4 \int_{P^+_R(z_1)} |\nabla d|^4.
\end{split}
\end{equation*}
\end{lemma}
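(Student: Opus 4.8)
The plan is to prove the gradient estimate by a rescaling and compactness argument, the standard device for turning interior/boundary regularity for the heat equation into the quantitative decay stated.

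First I would establish the interior estimate. Fix $z_1=(x_1,t_1)$ and $R>0$ so that $P_R(z_1)$ is contained in the relevant domain. By parabolic rescaling $d_R(y,s):=d(x_1+Ry,t_1+R^2s)$, which again solves the heat equation on $P_1(0)$, it suffices to prove
\[
\int_{P_\theta(0)}|\nabla d_R|^4 \lesssim \theta^4 \int_{P_1(0)}|\nabla d_R|^4 \qquad \text{for all } \theta\in(0,1/2],
\]
since the case $\theta\in(1/2,1)$ is trivial with constant $\lesssim 1$. Here I would use interior parabolic regularity: on $P_{1/2}(0)$, $\nabla d_R$ and $\nabla^2 d_R$ are controlled in $L^\infty$ by $\|\nabla d_R\|_{L^2(P_1(0))}$ (or by its $L^4$ norm, which is all that appears on the right), because $\nabla d_R$ itself solves the heat equation. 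Hence on $P_\theta(0)$,
\[
\int_{P_\theta(0)}|\nabla d_R|^4 \le |P_\theta(0)|\,\|\nabla d_R\|_{L^\infty(P_\theta(0))}^4 \lesssim \theta^4\,\|\nabla d_R\|_{L^\infty(P_{1/2}(0))}^4 \lesssim \theta^4 \int_{P_1(0)}|\nabla d_R|^4,
\]
using $|P_\theta(0)|\sim \theta^4$ in the parabolic scaling ($\theta^2$ in space dimension $2$, times $\theta^2$ in time). Undoing the rescaling (the $L^4$ norm of $\nabla d$ over a parabolic cylinder is scale-invariant in $2+1$ dimensions precisely in the combination appearing) gives the claim. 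Actually, to be careful, I would rather not rescale the $L^4$ norm but simply carry the dimensional factor: $\int_{P_{\theta R}(z_1)}|\nabla d|^4 \lesssim (\theta R)^4 \sup_{P_{R/2}(z_1)}|\nabla d|^4$ and $\sup_{P_{R/2}(z_1)}|\nabla d|^4 \lesssim R^{-4}\fint_{P_R(z_1)}|\nabla d|^4 = R^{-8}\int_{P_R(z_1)}|\nabla d|^4$; multiplying, the $R$ powers combine to give exactly $\theta^4 \int_{P_R(z_1)}|\nabla d|^4$. The point is that the interior estimate $\sup_{P_{R/2}}|\nabla d| \lesssim R^{-2}\|\nabla d\|_{L^2(P_R)}$, or the $L^4$ variant, is standard (De Giorgi--Nash--Moser, or simply the representation via the heat kernel since $\nabla d$ is caloric).

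For the boundary estimate, the same scheme applies with boundary parabolic regularity in place of interior regularity. After flattening the boundary near $x_1\in\partial\Omega$ via a smooth diffeomorphism (the domain is $C^\infty$), the equation for $d$ becomes a uniformly parabolic equation with smooth coefficients, and the Dirichlet or Neumann condition is preserved up to lower-order terms; for the Dirichlet case one works with $d$ directly, and for the Neumann case one may reflect $d$ evenly across the flattened boundary to obtain a caloric (or uniformly parabolic) function on a full cylinder, then apply the interior estimate. In either case one gets $\sup_{P^+_{R/2}(z_1)}|\nabla d| \lesssim R^{-2}\|\nabla d\|_{L^2(P^+_R(z_1))}$ and concludes as before. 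The diffeomorphism distorts cylinders into comparable regions, which only affects constants.

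The main obstacle — and the only place where real care is needed — is the boundary case with the Neumann condition: one must check that the even reflection genuinely produces a function solving a uniformly parabolic equation across the flattened boundary (so that interior estimates are applicable to the reflected function), i.e.\ that no bad jump in the normal derivative is introduced. This is where the hypothesis $\n\cdot\nabla d=0$ is used precisely. For the Dirichlet case an odd reflection works and is cleaner. Everything else is bookkeeping of scaling exponents, which I have sketched above; the cube $\theta^4$ is forced by the parabolic volume scaling in two space dimensions plus one time dimension.
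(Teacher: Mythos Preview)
Your approach is correct and is essentially the standard argument behind the cited references: local boundedness (sub-mean-value) for caloric functions together with the parabolic volume scaling $|P_{\theta R}|\sim(\theta R)^4$ in two space dimensions. The paper does not give its own proof at all; it simply cites Lieberman's book, so you are supplying strictly more than the paper does.

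One small bookkeeping slip in your ``to be careful'' alternative computation: the correct local estimate is
\[
\sup_{P_{R/2}(z_1)}|\nabla d|^4 \;\lesssim\; \fint_{P_R(z_1)}|\nabla d|^4 \;=\; R^{-4}\int_{P_R(z_1)}|\nabla d|^4,
\]
not $R^{-4}\fint = R^{-8}\int$ as you wrote; with the corrected exponent, $(\theta R)^4\cdot R^{-4}=\theta^4$ and the powers of $R$ cancel as they should. Your earlier rescaled computation on $P_1(0)$ is already correct and is the cleaner way to present it, so this slip is harmless. Your treatment of the boundary case (flatten, then odd reflection for Dirichlet, even reflection for Neumann, or alternatively invoke boundary $L^p$ estimates for uniformly parabolic operators directly) is the standard route and matches what the Lieberman references provide.
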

\begin{proof}
    See  \cite[Lemma 4.5, Lemma 4.13, Lemma 4.20, Theorem 7.35]{Lieberman96}.  
\end{proof}

\begin{lemma}[Parabolic Morrey's decay lemma]\label{parabolic morrey}
    Suppose for any $z\in \overline{\Omega}$ and any $0<r<\min(\diam(\Omega),\sqrt{T})$ we have 
    \[  \int_{P_r(z)\cap\Omega_T} |\nabla d|^p + r^p|d_t|^p \lesssim r^{n+2+(\alpha-1)p} , \]
    then $d\in C^\alpha(P_{\frac12}(z_1)\cap\Omega_T)$.
\end{lemma}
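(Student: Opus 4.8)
The plan is to read this as a parabolic Campanato embedding: first convert the assumed Morrey-type bound on $\nabla d$ and $d_t$ into a mean-oscillation decay for $d$ over parabolic cylinders, then deduce parabolic Hölder continuity by the classical telescoping argument. Throughout write $z=(x_0,t_0)$ with $x_0\in\overline\Omega$, so that $P_r(z)=B_r(x_0)\times[t_0-r^2,t_0]$, and for such a cylinder let $(d)_{r,z}$ denote the average of $d$ over $P_r(z)\cap\Omega_T$; working componentwise we may assume $d$ is scalar.

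The first step is a parabolic Poincaré inequality: for all admissible $z$ and $r$,
\[
\int_{P_r(z)\cap\Omega_T}\bigl|d-(d)_{r,z}\bigr|^p\ \lesssim\ r^p\int_{P_r(z)\cap\Omega_T}\bigl(|\nabla d|^p+r^p|d_t|^p\bigr).
\]
I would prove this by slicing in time. For a.e.\ fixed $s\in[t_0-r^2,t_0]$ the spatial Poincaré inequality on $B_r(x_0)\cap\Omega$ controls $\int_{B_r(x_0)\cap\Omega}|d(\cdot,s)-\overline d(s)|^p$ by $C\,r^p\int_{B_r(x_0)\cap\Omega}|\nabla d(\cdot,s)|^p$, where $\overline d(s)$ is the spatial average of $d(\cdot,s)$. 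Moreover, for $s_1,s_2\in[t_0-r^2,t_0]$, writing $\overline d(s_1)-\overline d(s_2)$ as an integral of $d_t$ and using Hölder's inequality together with $|B_r(x_0)\cap\Omega|\gtrsim r^n$ gives $|\overline d(s_1)-\overline d(s_2)|\lesssim r^{2-(n+2)/p}\bigl(\int_{P_r(z)\cap\Omega_T}|d_t|^p\bigr)^{1/p}$. Combining the two contributions and using $|P_r(z)\cap\Omega_T|\sim r^{n+2}$ produces exactly the weighted combination $|\nabla d|^p+r^p|d_t|^p$ appearing in the hypothesis. Plugging the assumed bound into this inequality yields the Campanato decay
\[
\int_{P_r(z)\cap\Omega_T}\bigl|d-(d)_{r,z}\bigr|^p\ \lesssim\ r^p\cdot r^{n+2+(\alpha-1)p}\ =\ r^{n+2+\alpha p}.
\]

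From there I would run the standard iteration. Using $|P_{r/2}(z)\cap\Omega_T|\gtrsim r^{n+2}$ and the last display, Hölder's inequality gives $|(d)_{r/2,z}-(d)_{r,z}|\lesssim r^\alpha$; summing the geometric series $\sum_k(2^{-k}r)^\alpha<\infty$ shows $(d)_{2^{-k}r,z}$ is Cauchy, hence converges as $k\to\infty$ to a limit which we take as the pointwise representative of $d$ (it coincides with $d$ at Lebesgue points), with $|d(z)-(d)_{r,z}|\lesssim r^\alpha$. Finally, if $w_1,w_2\in P_{1/2}(z_1)\cap\Omega_T$ are at parabolic distance $\delta$, then both $P_\delta(w_i)\cap\Omega_T$ lie inside $P_{C\delta}(w_1)\cap\Omega_T$ with comparable measures, so comparing each of the two averages to $(d)_{C\delta,w_1}$ yields $|(d)_{\delta,w_1}-(d)_{\delta,w_2}|\lesssim\delta^\alpha$, and the triangle inequality gives $|d(w_1)-d(w_2)|\lesssim\delta^\alpha$, i.e.\ $d\in C^\alpha(P_{1/2}(z_1)\cap\Omega_T)$ in the parabolic sense. (Equivalently, one may invoke the parabolic Campanato embedding, e.g.\ \cite{Lieberman96}.)

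The hard part is the uniformity up to $\partial\Omega$ — the only place the argument leaves the classical interior setting. One needs the spatial Poincaré constant on $B_r(x_0)\cap\Omega$, together with the measure-density bounds $|B_r(x_0)\cap\Omega|\gtrsim r^n$ and $|P_r(z)\cap\Omega_T|\gtrsim r^{n+2}$ (the latter in the relevant regime $r<\sqrt T$ and $t_0\gtrsim r^2$, so that the time-slice is not truncated), to be uniform in $x_0\in\overline\Omega$ and in small $r$, and one must compare averages over two cylinders both cut by $\partial\Omega$. These all follow from the $C^\infty$ regularity of $\Omega$, e.g.\ by localizing near a boundary point and flattening $\partial\Omega$ by a diffeomorphism; this boundary bookkeeping is precisely what distinguishes the present free-boundary setting from the interior one.
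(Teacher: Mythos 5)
Your proof is correct and is essentially the paper's own argument written out in full: the paper's one-line proof applies the parabolic Poincaré inequality to bound $\int_{P_r\cap\Omega_T}|d-d_{z,r}|^p$ by $r^p\int|\nabla d|^p + r^{2p}\int|d_t|^p \lesssim r^{n+2+\alpha p}$ and then invokes the parabolic Campanato characterization of Hölder spaces, which is exactly your Steps 1--3. The extra material you include (the time-slicing proof of the parabolic Poincaré inequality and the telescoping/Lebesgue-point argument) is standard and merely makes the paper's cited Poincaré and Campanato steps explicit.
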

\begin{proof}
We use the definition of Campanato space and apply Poincar\'e inequality \[ r^{-(n+2+\alpha p)}\int_{P_r(z)\cap\Omega_T} |d-d_{z,r}|^p \lesssim r^{-(n+2+ \alpha p)}\int_{P_r(z)\cap\Omega_T} r^p|\nabla d|^p + r^{2p}  |d_t|^p \lesssim 1.\]\end{proof}  

\begin{lemma}[A variant of Ladyzhenskaya's inequality]\label{Ladyzhenskaya} There exists $C_0$ and $R_0$ depending only on $\Omega$ such that for any $T>0$, if $u\in L^{2,\infty}(\Omega_T)\cap W^{1,0}_2(\Omega_T)$, then for $R\in(0,R_0)$,
\[ \int_{\Omega_T} |u|^4 \leq C_0 \sup_{(x,t)\in \overline{\Omega}_T}\int_{\Omega\cap B_R(x)} |u|^2(\cdot,t) \Big(\int_{\Omega_T} |\nabla u|^2  + \frac{1}{R^2}\int_{\Omega_T} |u|^2 \Big).\]
\end{lemma}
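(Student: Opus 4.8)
The plan is to prove the variant of Ladyzhenskaya's inequality by a localization argument combined with the classical two-dimensional Gagliardo--Nirenberg--Ladyzhenskaya estimate. First I would recall that on all of $\RR^2$ one has $\|f\|_{L^4}^4 \lesssim \|f\|_{L^2}^2\|\nabla f\|_{L^2}^2$, and on a fixed bounded Lipschitz domain $D$ the analogous estimate $\|f\|_{L^4(D)}^4 \lesssim \|f\|_{L^2(D)}^2\big(\|\nabla f\|_{L^2(D)}^2 + \|f\|_{L^2(D)}^2\big)$ by extension. The point of the statement is to get the $\sup_x$ over small balls of the $L^2$ mass to appear in front, which is the geometrically local version; this is exactly the gain that makes the right-hand side small when the local energy is small, and it is what is used in the $\epsilon$-regularity arguments later.

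Here is the sequence of steps I would carry out. Fix $R \in (0,R_0)$ where $R_0$ is small enough that each ball $B_R(x)\cap\Omega$ is diffeomorphic (with uniform constants) to a half-disk or a disk, using that $\partial\Omega$ is $C^\infty$; choose a finite cover of $\overline{\Omega}$ by balls $\{B_{R/2}(x_j)\}_{j=1}^{N}$ with $N \lesssim |\Omega| R^{-2}$ and a subordinate partition of unity $\{\chi_j\}$ with $\chi_j$ supported in $B_R(x_j)$, $0\le\chi_j\le 1$, $|\nabla\chi_j|\lesssim R^{-1}$, and $\sum_j\chi_j^2 \equiv 1$ on $\overline{\Omega}$ (this last normalization is convenient for handling fourth powers). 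Then for each fixed time $t$, write $|u|^4 = \big(\sum_j \chi_j^2 |u|^2\big)^2 \le \big(\sum_j \chi_j^2\big)\big(\sum_j \chi_j^2 |u|^4\big) = \sum_j \chi_j^2 |u|^4$, so it suffices to estimate $\int_{\Omega_T}\chi_j^2|u|^4$ for each $j$. On the set $B_R(x_j)\cap\Omega$ apply the bounded-domain Ladyzhenskaya inequality to $\chi_j u$ (after flattening the boundary if $x_j$ is near $\partial\Omega$; the bilipschitz change of variables only affects constants):
\[
\int_{B_R(x_j)\cap\Omega} |\chi_j u|^4(\cdot,t) \;\lesssim\; \Big(\int_{B_R(x_j)\cap\Omega}|\chi_j u|^2(\cdot,t)\Big)\Big(\int_{B_R(x_j)\cap\Omega}|\nabla(\chi_j u)|^2(\cdot,t) + |\chi_j u|^2(\cdot,t)\Big).
\]
In the first factor bound $\int |\chi_j u|^2(\cdot,t) \le \int_{B_R(x_j)\cap\Omega}|u|^2(\cdot,t) \le \sup_{(x,t)\in\overline{\Omega}_T}\int_{B_R(x)\cap\Omega}|u|^2(\cdot,t) =: M(R)$, uniformly in $j$ and $t$. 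In the second factor expand $|\nabla(\chi_j u)|^2 \lesssim \chi_j^2|\nabla u|^2 + |\nabla\chi_j|^2|u|^2 \lesssim \chi_j^2|\nabla u|^2 + R^{-2}|u|^2$. Now integrate in $t$, sum over $j$, and use that the supports $B_R(x_j)$ have bounded overlap (each point lies in at most a fixed finite number of them, by the uniform cover): $\sum_j \int_{\Omega_T}\chi_j^2|\nabla u|^2 \lesssim \int_{\Omega_T}|\nabla u|^2$ and $\sum_j R^{-2}\int_{\Omega_T}|u|^2 \lesssim R^{-2}\int_{\Omega_T}|u|^2$. Pulling the uniform bound $M(R)$ out of the first factor gives exactly
\[
\int_{\Omega_T}|u|^4 \;\le\; C_0\, M(R)\,\Big(\int_{\Omega_T}|\nabla u|^2 + \frac{1}{R^2}\int_{\Omega_T}|u|^2\Big),
\]
with $C_0$ depending only on $\Omega$ through the covering and extension constants.

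The main obstacle, and the step needing the most care, is the boundary analysis: ensuring that the bounded-domain Ladyzhenskaya inequality holds on the pieces $B_R(x_j)\cap\Omega$ with a constant independent of $j$ and of $R$. This is where the smoothness of $\partial\Omega$ and the smallness of $R_0$ enter — after rescaling $B_R(x_j)\cap\Omega$ to unit size, the boundary becomes nearly flat (its curvature is $O(R)$), so one can straighten it by a $C^1$ diffeomorphism uniformly close to the identity, reducing to the half-disk case; there the estimate follows from even reflection across the flat part (or from a Sobolev extension operator whose norm is controlled) together with the $\RR^2$ Ladyzhenskaya inequality. Scaling back reinserts the $R^{-2}\int|u|^2$ term with the correct power, which is why that term is present in the statement. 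A secondary technical point is that $u$ only lies in $L^{2,\infty}\cap W^{1,0}_2$, so the pointwise-in-$t$ manipulations should be justified for a.e.\ $t$ and then integrated, which is routine by Fubini.
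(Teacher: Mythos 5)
Your overall strategy is exactly the standard proof of this localized Ladyzhenskaya inequality, and it matches what the paper does: the paper does not prove Lemma \ref{Ladyzhenskaya} itself but cites \cite[Lemma 3.1]{Struwe85}, whose proof is precisely this covering-plus-classical-Ladyzhenskaya argument.

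One small slip in the write-up, though: you normalize the partition of unity so that $\sum_j\chi_j^2\equiv1$ and conclude that it suffices to estimate $\int\chi_j^2|u|^4$, but then you apply the bounded-domain Ladyzhenskaya inequality to $\chi_j u$, which estimates $\int|\chi_j u|^4=\int\chi_j^4|u|^4$. Since $0\le\chi_j\le1$ gives $\chi_j^4\le\chi_j^2$, bounding $\int\chi_j^4|u|^4$ does not bound $\int\chi_j^2|u|^4$, so the chain does not close as written. The fix is immediate: either observe that bounded overlap $\kappa$ and $\sum_j\chi_j^2\equiv1$ force $\sum_j\chi_j^4\ge1/\kappa$, so $|u|^4\le\kappa\sum_j|\chi_j u|^4$ pointwise; or more simply drop the partition-of-unity normalization and just take cutoffs $\chi_j\in C_c^\infty(B_R(x_j))$ with $\chi_j\equiv1$ on $B_{R/2}(x_j)$, so that $\int_\Omega|u|^4\le\sum_j\int|\chi_j u|^4$ directly from $\overline\Omega\subset\bigcup_jB_{R/2}(x_j)$. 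Everything else — the boundary flattening with uniform constants for small $R$, pulling out the $\sup$ of the local $L^2$ mass, absorbing $|\nabla\chi_j|^2\lesssim R^{-2}$ into the $R^{-2}\int|u|^2$ term, and the bounded-overlap summation — is correct and is the intended argument.
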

\begin{proof}
    See \cite[Lemma 3.1]{Struwe85}.
\end{proof}

\begin{lemma}[Refined embedding theorem]\label{refined embedding} For $u\in W^{1,p}_0(\Omega)$ with $1\leq p< n$ and $1\leq r\leq p^*=\frac{np}{n-p}$, we have for any $q\in[r, p^*]$,
\[ \|u\|_{L^q(\Omega)} \lesssim \|\nabla u \|^\alpha_{L^p(\Omega)} \|u\|^{1-\alpha}_{L^r(\Omega)},  \]
where $\alpha = (\frac1r-\frac1q)(\frac1r-\frac1{p^*})^{-1}$.    
\end{lemma}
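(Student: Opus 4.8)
The plan is to combine the classical Sobolev embedding on $W^{1,p}_0(\Omega)$ with the elementary logarithmic-convexity (interpolation) inequality for $L^q$ norms. First I would recall that for $1\leq p<n$ one has the Gagliardo--Nirenberg--Sobolev inequality $\|u\|_{L^{p^*}(\Omega)}\lesssim \|\nabla u\|_{L^p(\Omega)}$ for all $u\in W^{1,p}_0(\Omega)$, with $p^*=\frac{np}{n-p}$; this is where the hypothesis $u\in W^{1,p}_0$ (zero trace) is used, so no boundary term appears and the constant depends only on $n,p$ (and not on $\Omega$).

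Next, fix $q\in[r,p^*]$ and determine $\alpha\in[0,1]$ by the relation $\frac1q=\frac{1-\alpha}{r}+\frac{\alpha}{p^*}$, which upon solving gives exactly $\alpha=\left(\frac1r-\frac1q\right)\left(\frac1r-\frac1{p^*}\right)^{-1}$; the constraint $r\leq q\leq p^*$ guarantees $\alpha\in[0,1]$, and the degenerate endpoints $q=r$ ($\alpha=0$) and $q=p^*$ ($\alpha=1$) are handled trivially (the former is an identity, the latter is the Sobolev inequality above). Then I would apply Hölder's inequality in the form
\[
\int_\Omega |u|^q=\int_\Omega |u|^{(1-\alpha)q}\,|u|^{\alpha q}\leq \left(\int_\Omega |u|^{r}\right)^{\frac{(1-\alpha)q}{r}}\left(\int_\Omega |u|^{p^*}\right)^{\frac{\alpha q}{p^*}},
\]
valid since $\frac{(1-\alpha)q}{r}+\frac{\alpha q}{p^*}=1$ by the choice of $\alpha$. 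Taking $q$-th roots yields $\|u\|_{L^q(\Omega)}\leq \|u\|_{L^r(\Omega)}^{1-\alpha}\|u\|_{L^{p^*}(\Omega)}^{\alpha}$, and inserting the Sobolev bound on the last factor gives the asserted estimate $\|u\|_{L^q(\Omega)}\lesssim \|\nabla u\|_{L^p(\Omega)}^\alpha\|u\|_{L^r(\Omega)}^{1-\alpha}$.

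There is essentially no serious obstacle here: the statement is a routine consequence of Sobolev embedding plus interpolation, and the only point requiring a line of care is checking that the prescribed $\alpha$ indeed lies in $[0,1]$ under the hypothesis $r\leq q\leq p^*$ and that the Hölder exponents sum to one. I would also note in passing that the case $p=1$ (so $p^*=\frac{n}{n-1}$) is included, since $W^{1,1}_0(\Omega)\hookrightarrow L^{n/(n-1)}(\Omega)$ still holds.
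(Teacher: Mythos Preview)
Your proof is correct and is exactly the standard argument for this Gagliardo--Nirenberg--type interpolation: Sobolev embedding $W^{1,p}_0\hookrightarrow L^{p^*}$ followed by the H\"older interpolation $\|u\|_{L^q}\le\|u\|_{L^r}^{1-\alpha}\|u\|_{L^{p^*}}^{\alpha}$. The paper itself states this lemma as a preliminary fact without proof or reference, so there is nothing further to compare.
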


\begin{lemma}[$L^p-L^q$ regularity for Neumann heat equation] \label{lem: LpLq reg Heat} Let $1\leq p\leq q\leq \infty, (q\neq 1, p\neq \infty)$.
    Let $ (e^{t\Delta})_{t\geq 0} $ be the Neumann heat semigroup in $\Omega$, and let $\lambda_1>$  0 denote the first nonzero
eigenvalue of $-\Delta$ in $\Omega$ under Neumann boundary conditions. Then there exist constant $C(\Omega,t_0)$ such that
\begin{equation*}
\begin{split}    
\|e^{t\Delta} f \|_{L^q(\Omega)} &\leq C(\Omega,t_0) t^{-\frac{n}{2}(\frac{1}{p} - \frac{1}{q})} e^{-\lambda_1 t} \| f \|_{L^p(\Omega)}  \\
\| \nabla e^{t\Delta} f \|_{L^q(\Omega)} & \leq C(\Omega,t_0) t^{-\frac12-\frac{n}{2}(\frac1p-\frac1q)} e^{-\lambda_1 t} \|f\|_{L^p(\Omega)}
\end{split}
\end{equation*}
for $t\leq t_0$.
\end{lemma}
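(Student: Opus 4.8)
The factor $e^{-\lambda_1 t}$ is essentially cosmetic on the finite interval $(0,t_0]$, so I would first dispose of it: since the exponent $-\tfrac n2(\tfrac1p-\tfrac1q)$ is $\le 0$, the map $t\mapsto t^{-\frac n2(\frac1p-\frac1q)}$ is non-increasing, and $e^{-\lambda_1 t_0}\le e^{-\lambda_1 t}\le 1$ for $0<t\le t_0$, whence
\[
t^{-\frac n2(\frac1p-\frac1q)}\ \le\ e^{\lambda_1 t_0}\,t^{-\frac n2(\frac1p-\frac1q)}\,e^{-\lambda_1 t},\qquad 0<t\le t_0,
\]
and likewise for the gradient exponent $-\tfrac12-\tfrac n2(\tfrac1p-\tfrac1q)$. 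Thus it suffices to prove the two estimates with $e^{-\lambda_1 t}$ deleted, letting the constant depend on $t_0$. (The factor records the exponential relaxation of $e^{t\Delta}f$ to its spatial mean $\bar f=|\Omega|^{-1}\int_\Omega f$; keeping it meaningful as $t\to\infty$ would require splitting off $\bar f$ and invoking the spectral gap, but that plays no role here.)

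The plan then rests on two standard properties of the Neumann heat semigroup on the smooth bounded domain $\Omega$. First, $e^{t\Delta}$ has a nonnegative, symmetric kernel $G(x,y,t)$ with $\int_\Omega G(x,y,t)\,dy=\int_\Omega G(x,y,t)\,dx=1$ — mass is conserved because integrating $\partial_t G=\Delta_y G$ and invoking the Neumann condition kills the boundary flux — so $e^{t\Delta}$ is a contraction on $L^\infty(\Omega)$ and, being self-adjoint, on $L^1(\Omega)$, hence on every $L^r(\Omega)$ by interpolation. Second, since $\partial\Omega\in C^\infty$, $G$ and its spatial gradient satisfy Gaussian upper bounds up to the boundary,
\[
0\le G(x,y,t)\le C_{t_0}\,t^{-n/2}e^{-c|x-y|^2/t},\qquad |\nabla_x G(x,y,t)|\le C_{t_0}\,t^{-(n+1)/2}e^{-c|x-y|^2/t},
\]
for $0<t\le t_0$. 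I would obtain these by flattening the boundary and combining interior and boundary parabolic estimates (cf.\ \cite{Lieberman96}), or quote them from the heat-kernel literature.

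For the zeroth order bound I would argue as follows. The first Gaussian bound gives $\|e^{t\Delta}f\|_{L^\infty}\le C_{t_0}t^{-n/2}\|f\|_{L^1}$; interpolating (Riesz--Thorin) the $L^1\to L^1$ contraction with this $L^1\to L^\infty$ bound yields an $L^1\to L^r$ bound with operator norm $\lesssim t^{-\frac n2(1-\frac1r)}$ for every $r\in[1,\infty]$, and interpolating that in turn with the $L^\infty\to L^\infty$ contraction yields $\|e^{t\Delta}\|_{L^p\to L^q}\lesssim t^{-\frac n2(\frac1p-\frac1q)}$ for $1\le p\le q\le\infty$ and $0<t\le t_0$; the exponent bookkeeping is just $(1-\tfrac1r)(1-\theta)=\tfrac1p-\tfrac1q$ with $\theta=1-\tfrac1p$ and $r=q/p$, and the hypothesis $p\ne\infty$ is exactly what makes the interpolation parameter $\theta<1$ legitimate. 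For the gradient bound, integrating the gradient Gaussian estimate gives $\sup_x\int_\Omega|\nabla_xG(x,y,t)|\,dy\lesssim t^{-1/2}$ and $\sup_y\int_\Omega|\nabla_xG(x,y,t)|\,dx\lesssim t^{-1/2}$, hence $\|\nabla e^{t\Delta}\|_{L^1\to L^1}\lesssim t^{-1/2}$ and $\|\nabla e^{t\Delta}\|_{L^\infty\to L^\infty}\lesssim t^{-1/2}$, so by Riesz--Thorin $\|\nabla e^{t\Delta}\|_{L^q\to L^q}\lesssim t^{-1/2}$ for all $q\in[1,\infty]$. I would then combine this with the zeroth order smoothing via the semigroup law $e^{t\Delta}=e^{(t/2)\Delta}\circ e^{(t/2)\Delta}$:
\[
\|\nabla e^{t\Delta}f\|_{L^q}\ \lesssim\ t^{-1/2}\,\|e^{(t/2)\Delta}f\|_{L^q}\ \lesssim\ t^{-\frac12-\frac n2(\frac1p-\frac1q)}\,\|f\|_{L^p},\qquad 0<t\le t_0.
\]
Reinserting $e^{-\lambda_1 t}$ as in the first paragraph finishes both estimates. (For the special case $q=2$ one could bypass the gradient heat-kernel bound, using instead the energy identity $t\,\|\nabla e^{t\Delta}f\|_{L^2}^2\le\int_0^t\|\nabla e^{s\Delta}f\|_{L^2}^2\,ds\le\tfrac12\|f\|_{L^2}^2$, valid because $s\mapsto\|\nabla e^{s\Delta}f\|_{L^2}$ is non-increasing under Neumann conditions; the kernel route is preferred only because it treats all $q$ at once.)

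The genuinely nontrivial ingredient — and the step I expect to require the most care — is the Gaussian bound for the Neumann heat kernel and, above all, the pointwise bound on $\nabla_x G$ up to the boundary, which really uses the $C^\infty$ regularity of $\partial\Omega$; everything else is interpolation and the semigroup identity. I would handle it by the localization/flattening argument sketched above or by citing it directly.
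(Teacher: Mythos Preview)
Your argument is correct and is essentially the standard proof of this result. The paper, however, does not prove the lemma at all: it simply cites \cite[Lemma~1.3]{Winkler10} and moves on. What you have written is, in outline, precisely the argument one finds in that reference---Gaussian upper bounds for the Neumann heat kernel and its gradient, combined with Riesz--Thorin interpolation and the semigroup splitting $e^{t\Delta}=e^{(t/2)\Delta}e^{(t/2)\Delta}$---so in effect you have unpacked the citation rather than taken a different route. Your observation that the $e^{-\lambda_1 t}$ factor is cosmetic on $(0,t_0]$ is exactly right and explains why the lemma, as stated for bounded $t$, does not require the mean-zero hypothesis that would be needed for genuine exponential decay as $t\to\infty$. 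One small remark: the exclusions $q\neq 1$, $p\neq\infty$ in the hypothesis are inherited from Winkler's formulation and are not actually needed for the finite-time estimates you prove; your interpolation covers the endpoint cases $p=q=1$ and $p=q=\infty$ directly via the $L^1$ and $L^\infty$ contractions.
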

\begin{proof}
    See \cite[Lemma 1.3]{Winkler10}.
\end{proof}

\begin{lemma}[$L^p-L^q$ regularity for Stokes operator with Navier boundary condition]\label{lem: LpLq reg Stokes} 
Let  $\Omega\subseteq\RR^n$, $n\geq 2$ be a bounded $C^3$-smooth domain. Let $1 < p \leq q < \infty$, $\delta>0$, let $\PP_p: L^p(\Omega) \to L^p_\sigma(\Omega)$   denote the Helmholtz projection, and let $\bbA_p=\PP_p \Delta$ be the Stokes operator with Navier boundary condition.
Suppose either one of the following two conditions holds:
\begin{enumerate}
    \item If $p<\frac{n}{2}$ and $p\leq q \leq \frac{np}{n-2p}$ where $0\leq \gamma = \frac{n}{2}(\frac{1}{p} - \frac{1}{q})\leq 1$
    \item  If $p\geq\frac{n}{2}$ and $p\leq q$ where $1\geq \gamma\geq 1-\frac{p}{q} \geq 0$,
\end{enumerate}
then
\[ \|e^{-t\bbA_p} u \|_{L^q(\Omega)} \leq C(\delta,p,\Omega) \Big( \frac{t+1}{t}\Big)^\gamma e^{\delta t} \|u\|_{L^p(\Omega)} \]
for $u\in L^p_\sigma(\Omega)$.
\end{lemma}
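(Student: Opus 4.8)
The plan is to derive the estimate from the analyticity of the Stokes semigroup with Navier boundary condition, together with Sobolev embeddings for the domains of the fractional powers of a shifted Stokes operator. I would first record the two structural inputs, both of which are available in the literature on the Stokes operator under slip–type boundary conditions. (i) On a bounded $C^3$ domain $\Omega\subseteq\RR^n$, the operator $\bbA_p$ with domain $D(\bbA_p)=\{u\in W^{2,p}(\Omega)\cap L^p_\sigma(\Omega): u\cdot\n=0,\ (\DD u\cdot\n)_\tau=0 \text{ on }\partial\Omega\}$ is a non-negative sectorial operator on $L^p_\sigma(\Omega)$ for every $1<p<\infty$; hence $-\bbA_p$ generates a bounded analytic semigroup $(e^{-t\bbA_p})_{t\ge0}$, and moreover $\bbA_p$ admits bounded imaginary powers (a bounded $H^\infty$-calculus) on $L^p_\sigma(\Omega)$. (ii) The Helmholtz projection $\PP_p$ is bounded on $L^p(\Omega)$, and the Sobolev embeddings $W^{2,p}(\Omega)\hookrightarrow L^{np/(n-2p)}(\Omega)$ for $p<n/2$, and $W^{2,p}(\Omega)\hookrightarrow L^\infty(\Omega)$ for $p>n/2$, hold.

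Because perfect-slip boundary conditions permit a nontrivial kernel (rigid rotations on axisymmetric domains, as noted in the introduction), $0$ need not lie in the resolvent set of $\bbA_p$, so I would fix $\delta>0$ and work with $\mathcal A_\delta:=\bbA_p+\delta$, which is invertible, sectorial, shares the domain of $\bbA_p$, and satisfies $e^{-t\mathcal A_\delta}=e^{-\delta t}e^{-t\bbA_p}$. By analyticity, $\|\mathcal A_\delta^\gamma e^{-t\mathcal A_\delta}\|_{L^p_\sigma\to L^p_\sigma}\lesssim t^{-\gamma}$ for every $\gamma\in[0,1]$ and all $t>0$, and splitting $e^{-t\mathcal A_\delta}=e^{-(t-1)\mathcal A_\delta}e^{-\mathcal A_\delta}$ for $t\ge1$ (using $\|e^{-t\mathcal A_\delta}\|\le Me^{-\delta t}$) also gives $\|\mathcal A_\delta^\gamma e^{-t\mathcal A_\delta}\|\lesssim1$ there, so altogether
\[
\|\mathcal A_\delta^\gamma e^{-t\mathcal A_\delta}\|_{L^p_\sigma\to L^p_\sigma}\ \lesssim\ \Big(\tfrac{t+1}{t}\Big)^{\gamma},\qquad t>0 .
\]
Since $\mathcal A_\delta$ is boundedly invertible, $\|\mathcal A_\delta^\gamma\,\cdot\,\|_{L^p(\Omega)}$ is an equivalent norm on $D(\mathcal A_\delta^\gamma)$, and since $\mathcal A_\delta$ has bounded imaginary powers, $D(\mathcal A_\delta^\gamma)=[L^p_\sigma(\Omega),D(\mathcal A_\delta)]_\gamma$ for $\gamma\in(0,1)$.

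The crux is to show $D(\mathcal A_\delta^\gamma)\hookrightarrow L^q(\Omega)$ under (1) or (2). In case (1), $p<n/2$: interpolating the embeddings $L^p_\sigma\hookrightarrow L^p$ and $D(\mathcal A_\delta)\hookrightarrow W^{2,p}\hookrightarrow L^{np/(n-2p)}$ gives $D(\mathcal A_\delta^\gamma)=[L^p_\sigma,D(\mathcal A_\delta)]_\gamma\hookrightarrow[L^p,L^{np/(n-2p)}]_\gamma=L^q$ precisely for $\gamma=\tfrac n2(\tfrac1p-\tfrac1q)$, the admissible range of $q$ being exactly $p\le q\le np/(n-2p)$, i.e. $0\le\gamma\le1$. (Equivalently, one may identify $D(\mathcal A_\delta^\gamma)$ with a solenoidal Bessel-potential space $H^{2\gamma,p}_\sigma(\Omega)$, incorporating the Navier conditions when $2\gamma>1+\tfrac1p$, and apply $H^{2\gamma,p}\hookrightarrow L^q$ for $2\gamma-\tfrac np\ge-\tfrac nq$.) In case (2), $p\ge n/2$: for $p>n/2$ one has $D(\mathcal A_\delta)\hookrightarrow W^{2,p}\hookrightarrow L^\infty$, so $D(\mathcal A_\delta^\gamma)\hookrightarrow[L^p,L^\infty]_\gamma=L^{p/(1-\gamma)}$, which embeds into $L^q$ exactly when $\gamma\ge1-\tfrac pq$; for $p=n/2$ one only has $D(\mathcal A_\delta)\hookrightarrow L^{q'}$ for every finite $q'$, but this still yields $D(\mathcal A_\delta^\gamma)\hookrightarrow L^q$ for any finite $q$ with $\gamma\ge1-\tfrac pq$ by taking $q'$ slightly larger than $q$. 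This is exactly why in the supercritical regime the Sobolev count saturates and it suffices that $\gamma$ exceed $1-p/q$ rather than equal $\tfrac n2(\tfrac1p-\tfrac1q)$.

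Finally I would combine the pieces: writing $e^{-t\bbA_p}=e^{\delta t}e^{-t\mathcal A_\delta}$, for $u\in L^p_\sigma(\Omega)$ and an admissible $\gamma$ as above,
\[
\|e^{-t\bbA_p}u\|_{L^q(\Omega)}=e^{\delta t}\|e^{-t\mathcal A_\delta}u\|_{L^q(\Omega)}\lesssim e^{\delta t}\|\mathcal A_\delta^\gamma e^{-t\mathcal A_\delta}u\|_{L^p(\Omega)}\lesssim \Big(\tfrac{t+1}{t}\Big)^{\gamma}e^{\delta t}\|u\|_{L^p(\Omega)},
\]
which is the claimed bound (for $t\le t_0$ one may replace $(\tfrac{t+1}{t})^\gamma$ by $Ct^{-\gamma}$). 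I expect the main obstacle to be the embedding step: it rests on the precise description of the domains of fractional powers of the Stokes operator under Navier boundary conditions and on the fact that the solenoidal constraint together with the boundary conditions does not obstruct the Sobolev embeddings — equivalently, on the bounded imaginary powers (bounded $H^\infty$-calculus) of $\bbA_p$ on $L^p_\sigma(\Omega)$ — for which I would cite the existing literature on the Stokes operator with slip-type boundary conditions rather than reprove it; the remaining steps are routine analytic-semigroup bookkeeping.
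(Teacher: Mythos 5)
Your argument hinges on the identification $D(\mathcal A_\delta^\gamma)=[L^p_\sigma(\Omega),D(\mathcal A_\delta)]_\gamma$, which you justify by asserting that $\bbA_p$ (equivalently the shifted operator) admits bounded imaginary powers, i.e.\ a bounded $\calH^\infty$-calculus, on $L^p_\sigma(\Omega)$ under Navier boundary conditions, to be cited from ``the existing literature on the Stokes operator with slip-type boundary conditions.'' That is precisely the point the paper flags as unavailable here. The remark following the lemma explains that the $\calH^\infty$-calculus route (sectorial resolvent, fractional and imaginary powers, Komatsu's inequality) is established for the Stokes operator with Navier-\emph{type} boundary conditions in dimension three, and for Navier \emph{slip} conditions only when the friction coefficient $\alpha$ is nontrivial or the domain is non-axisymmetric; for the two-dimensional perfect-slip ($\alpha=0$) case treated in this paper, the authors point to the stationary vortex on a disk as evidence that this machinery is not expected to apply, and they emphasize that the source they cite, Farwig--Rosteck, proves the estimate precisely \emph{without} fractional semigroup theory, using equivalent norms on $D(\bbA_q)$ and Sobolev interpolation. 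Shifting to $\bbA_p+\delta$ restores invertibility (and correctly reproduces the $(\,(t+1)/t\,)^\gamma e^{\delta t}$ shape of the bound), but invertibility alone does not give you BIP: without it, the general sectorial theory only yields the two-sided sandwich $(L^p_\sigma,D(\mathcal A_\delta))_{\gamma,1}\hookrightarrow D(\mathcal A_\delta^\gamma)\hookrightarrow(L^p_\sigma,D(\mathcal A_\delta))_{\gamma,\infty}$, and the real interpolation space on the right lands in a Besov-type class rather than the sharp $L^q$ at the critical exponent $q=np/(n-2p)$, so the endpoint of case (1) is not reached. This is a genuine gap, not a presentation issue. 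The paper's proof is simply a citation to Farwig--Rosteck, Corollary 1.4; to repair your argument you would either need to import or prove BIP for the 2D perfect-slip Stokes operator (which the authors believe fails in general), or abandon the fractional-powers route and follow Farwig--Rosteck's direct argument via equivalent graph norms and Sobolev interpolation of $D(\bbA_q)$.
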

\begin{proof}
    See  \cite[Corollary 1.4]{FR16}. Based on this inequality, one can also utilize interpolation of Sobolev space to derive
    \[ \|e^{-t\bbA_p} \nabla u \|_{L^q(\Omega)} \leq C(\delta,p,\Omega, t_0) t^{-\frac12-\gamma}  \|u\|_{L^p(\Omega)} \]
    for $ t\leq t_0$.
\end{proof}
\begin{remark}
\noindent
\begin{itemize}
\item    In the setup of Stokes operator with zero Dirichlet boundary condition, the $L^p-L^q$ regularity is in the form of Lemma \ref{lem: LpLq reg Heat}. This is proved by showing that the resolvent operator $R(\lambda,\bbA_p)=(\lambda I + \bbA_p)^{-1}$ is sectorial, and then study the fractional and purely imaginary power of $\bbA_p$, finally conclude the $L^p-L^q$ regularity by using Komatsu semigroup decaying inequality (cf. \cite[Theorem 12.1]{Komatsu66}). In dimension three, such scheme also works in the setup of Stokes operator with Navier-type boundary condition (i.e. $u \cdot\n = \curl u \times \n = 0$, see \cite{ABAE17}), as well as in the setup Navier slip boundary condition (i.e. $2[(\DD u)n]_\tau +\alpha u\cdot \tau = 0 $, see \cite{AEG18}). In the latter situation, either a nontrivial friction $\alpha$ is required, or the non-axisymmetric property of the domain $\Omega$ is required. This suggests that in the current setup of two dimensional Navier perfect-slip boundary condition, generally we  have no $\calH^\infty$-calculus (consider the counterexample of stationary vortex flow in disk). Indeed,  Lemma \ref{lem: LpLq reg Stokes} has no long-time decaying property and its proof in \cite{FR16} is based on Equivalent norms on $D(\bbA_q)$  and interpolation of Sobolev space, without using fractional semigroup.
\item See also \cite[Theorem 3.10]{KLW-PKSNS} for related semigroup estimates.
\end{itemize}
\end{remark}

\begin{lemma}[Parabolic Sobolev embedding theorem]\label{Sobolev embedding} We have continuous embedding $W^{2l,l}_q(Q_T) \subseteq W^{s,r}_p(Q_T) $ if $2l-2r-s-(\frac1q-\frac1p)(n+2)\geq 0$.
\end{lemma}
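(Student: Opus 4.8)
The plan is to prove this classical parabolic (anisotropic) Sobolev embedding, in the spirit of Ladyzhenskaya--Solonnikov--Ural'tseva, by representing a function through the fundamental solution of a parabolic Bessel potential and then invoking a Hardy--Littlewood--Sobolev inequality with respect to the parabolic metric. Throughout I would assume $1<q\le p<\infty$: if $q>p$ the embedding is immediate because $Q_T$ is bounded, and the endpoints $q=1$, $p=\infty$ require only the usual modifications (the latter giving in fact a Hölder embedding). Writing $\tfrac1{p^\sharp}:=\tfrac1q-\tfrac{2l-2r-s}{n+2}$, the hypothesis says exactly $p\le p^\sharp$, so since $|Q_T|<\infty$ it suffices to prove $W^{2l,l}_q(Q_T)\hookrightarrow W^{s,r}_{p^\sharp}(Q_T)$; in other words, one may assume the \emph{critical} relation $2l-2r-s=(\tfrac1q-\tfrac1p)(n+2)$. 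Finally, since $W^{s,r}_p(Q_T)$ controls all $\partial_t^j\nabla^\beta f$ with $2j+|\beta|\le 2r+s$, it is enough to estimate each such derivative in $L^p(Q_T)$.

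First I would extend: since $Q_T=\Omega\times(0,T)$ with $\Omega$ smooth and bounded, there is a bounded linear extension $E:W^{2l,l}_q(Q_T)\to W^{2l,l}_q(\RR^{n+1})$ with $\supp(Ef)$ compact (Hestenes--Seeley type reflections across the lateral boundary and across the two time faces, followed by a cutoff; cf. \cite{Lieberman96}), so one may work with $f:=Ef$ on all of $\RR^{n+1}$, with norm $\lesssim\|f\|_{W^{2l,l}_q(Q_T)}$. Next, with $H_t(x)=(4\pi t)^{-n/2}e^{-|x|^2/4t}$ the heat kernel, the operator $(\partial_t-\Delta+1)^{-l}$ on $\RR^{n+1}$ has kernel
\[
\mathcal G_{2l}(x,t)=\mathbf 1_{\{t>0\}}\,\frac{t^{\,l-1}}{\Gamma(l)}\,e^{-t}\,H_t(x),
\]
obtained from the $l$-fold time-convolution of $\mathbf 1_{\{t>0\}}e^{-t}H_t$ via $H_s*H_{t-s}=H_t$. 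Because the Gaussian factor is invariant under the parabolic dilation $(x,t)\mapsto(\lambda x,\lambda^2 t)$, one gets, for $z=(x,t)$ in a bounded parabolic neighborhood of the origin and all derivatives,
\[
\bigl|\partial_t^j\nabla^\beta\mathcal G_{2l}(z)\bigr|\lesssim d_P(z)^{\,2l-(n+2)-2j-|\beta|},\qquad d_P(z):=(|x|^2+t)^{1/2},
\]
together with exponential decay at infinity. Since $(\partial_t-\Delta+1)^l\mathcal G_{2l}=\delta_0$, one writes $f=\mathcal G_{2l}*g$ with $g:=(\partial_t-\Delta+1)^l f$; expanding the power, $g$ is a finite combination of $\partial_t^k\nabla^{2m}f$ with $k+m\le l$, hence $\|g\|_{L^q(\RR^{n+1})}\lesssim\|f\|_{W^{2l,l}_q(Q_T)}$.

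Then I would conclude by Hardy--Littlewood--Sobolev. For $2j+|\beta|\le 2r+s$ one has $\partial_t^j\nabla^\beta f=(\partial_t^j\nabla^\beta\mathcal G_{2l})*g$, and the kernel $\partial_t^j\nabla^\beta\mathcal G_{2l}$ is parabolically homogeneous of degree $2l-(n+2)-2j-|\beta|$ near the origin with exponential tail, so: if $0<2l-2j-|\beta|<n+2$ it lies in weak-$L^\rho(\RR^{n+1})$ for $\tfrac1\rho=1-\tfrac{2l-2j-|\beta|}{n+2}$ and the parabolic Hardy--Littlewood--Sobolev inequality (valid for $1<q<\infty$) gives $\partial_t^j\nabla^\beta f\in L^{p(j,\beta)}$ with $\tfrac1{p(j,\beta)}=\tfrac1q-\tfrac{2l-2j-|\beta|}{n+2}$; if $2j+|\beta|=2l$ the kernel is of Calderón--Zygmund type and one uses $L^q$-boundedness; and if $2l-2j-|\beta|\ge n+2$ the kernel is bounded with integrable tail, giving an $L^\infty$-bound. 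In every case $2j+|\beta|\le 2r+s$ forces $\tfrac1{p(j,\beta)}\le\tfrac1q-\tfrac{2l-2r-s}{n+2}=\tfrac1p$, i.e. $p\le p(j,\beta)$, so on the bounded cylinder
\[
\|\partial_t^j\nabla^\beta f\|_{L^p(Q_T)}\lesssim\|\partial_t^j\nabla^\beta f\|_{L^{p(j,\beta)}(Q_T)}\lesssim\|g\|_{L^q(\RR^{n+1})}\lesssim\|f\|_{W^{2l,l}_q(Q_T)},
\]
and summing over $2j+|\beta|\le 2r+s$ gives the claim. The hard part will be the two technical ingredients: (i) a bounded extension operator for the \emph{anisotropic} space $W^{2l,l}_q$ that respects both the lateral boundary $\partial\Omega\times(0,T)$ and the time faces $\{t=0\}$, $\{t=T\}$ (higher-order reflections matching the $\partial_t^j$-traces); and (ii) the sharp pointwise bounds on $\partial_t^j\nabla^\beta\mathcal G_{2l}$ near the origin, i.e. verifying that these parabolically homogeneous convolution kernels are genuinely of weak-$L^\rho$ type (and, in the borderline case, carry the cancellation needed for Calderón--Zygmund theory), so that Hardy--Littlewood--Sobolev applies at the critical exponent. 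Granting these, the embedding is pure scaling bookkeeping as above.
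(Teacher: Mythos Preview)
The paper does not actually prove this lemma; it simply cites \cite[Lemma~3.3]{LSU68}. Your proposal is a correct and standard route to the parabolic Sobolev embedding---representation via the parabolic Bessel potential $\mathcal G_{2l}=(\partial_t-\Delta+1)^{-l}$ followed by Hardy--Littlewood--Sobolev (and Calder\'on--Zygmund at the top order) with respect to the parabolic metric---and is essentially the argument one finds in the cited reference. The two technical points you isolate (the anisotropic extension operator for $W^{2l,l}_q$ across both lateral and time boundaries, and the sharp parabolic homogeneity bounds on $\partial_t^j\nabla^\beta\mathcal G_{2l}$) are exactly the nontrivial ingredients, and both are handled there.

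One small correction: the aside ``if $q>p$ the embedding is immediate because $Q_T$ is bounded'' is not right as stated. H\"older on a bounded domain gives only $W^{2l,l}_q\hookrightarrow W^{2l,l}_p$, and to descend further to $W^{s,r}_p$ one still needs $2l\ge 2r+s$; the scaling hypothesis alone can be satisfied with $q>p$ and $2l<2r+s$ (e.g.\ $l=r=0$, $s=1$, $q$ large, $p$ close to $1$), and then the embedding plainly fails. The lemma should be read, as in \cite{LSU68}, under the implicit restriction $q\le p$, which is precisely the regime your main argument treats.
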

\begin{proof}
    See \cite[Lemma 3.3]{LSU68}.
\end{proof}

\begin{lemma}[Boundary $W^{2,1}_{p,q}$-estimate for Stokes equation]\label{lem: W21pq} For a homogeneous, non-stationary Stokes equation $\partial_t u - \Delta u  + \nabla P= 0$ in $\Omega$ with Navier boundary condition on $\partial\Omega$. For $s\leq p$ and arbitrary $p_0(t)$, we have
\[ \|u\|_{W^{2,1}_{p,q}(Q^+_{\frac12})}  + \|\nabla P\|_{L^{p,q}(Q^+_{\frac12})} \lesssim \|\nabla u\|_{L^{s,q}(Q_1^+)} + \|P-p_0\|_{L^{s,q}(Q_1^+)}.\]
\end{lemma}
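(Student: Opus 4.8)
The plan is a localization-and-bootstrap argument in the spirit of the local $W^{2,1}_p$-theory for the Navier--Stokes system (cf.\ \cite{LLW10}), adapted to the Navier perfect-slip condition; it suffices to treat a boundary cylinder, the interior case being the classical one. After a $C^\infty$ boundary-flattening diffeomorphism near the relevant boundary point we may assume $\Omega\cap B_1=B_1^+:=B_1\cap\{x_n>0\}$ with outer normal $-e_n$ on $\{x_n=0\}$; this leaves a constant-coefficient principal part plus smooth lower-order terms in the interior equation and turns $(\DD u\cdot\n)_\tau=0$ into the same condition modulo a zeroth-order boundary term built from the second fundamental form of $\Gamma$ --- all lower order, to be absorbed on a small enough ball at the end. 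Since only $\nabla P$ enters, we may take $p_0(t)=\tfrac1{|B_1^+|}\int_{B_1^+}P(\cdot,t)\,dx$, any other choice only enlarging the right-hand side by a bounded factor. On the flat model the pressure is essentially decoupled: the divergence of $\partial_tu-\Delta u+\nabla P=0$ together with $\Div u=0$ gives $\Delta P(\cdot,t)=0$; moreover $u\cdot\n=0$ forces $u_n\equiv0$ on $\{x_n=0\}$, the tangential part of $\DD u\cdot\n=0$ then gives $\partial_nu_i=0$ there for $i<n$, and inserting these together with $\Div u=0$ into the $n$-th component of the equation yields $\partial_nP=\partial_{nn}u_n=-\sum_{i<n}\partial_i(\partial_nu_i)=0$ on $\{x_n=0\}$ (the terms $\partial_tu_n$ and the tangential Laplacian of $u_n$ vanish there). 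Hence $P(\cdot,t)$ is harmonic with zero Neumann data on the flat boundary; reflecting evenly across $\{x_n=0\}$ and using interior gradient bounds for harmonic functions gives, for $\tfrac12\le\rho<R\le1$ and every $s\le p$,
\[ \|\nabla P\|_{L^{p,q}(Q^+_\rho)}\lesssim (R-\rho)^{-\beta}\,\|P-p_0\|_{L^{s,q}(Q^+_R)} \]
for some $\beta>0$ --- this is the pressure part of the claim --- and in particular $f:=-\nabla P$ is controlled in $L^{p,q}(Q^+_{3/4})$ by $\|P-p_0\|_{L^{s,q}(Q^+_1)}$. (On the curved model the Neumann datum is only $O(|u|+|\nabla u|)$ on the boundary, adding a term $\|u\|_{W^{1,0}_{s,q}(Q^+_R)}$ treated like the remaining lower-order terms.)

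For the velocity, take a cutoff $\eta$ with $\eta\equiv1$ on $Q^+_\rho$, $\supp\eta\subset Q^+_R$, $|\partial_t^j\nabla^k\eta|\lesssim(R-\rho)^{-2j-k}$, chosen \emph{even in $x_n$}, so that $v:=\eta u$ still obeys the Navier condition on $\{x_n=0\}$ (indeed $\DD v\cdot\n=\eta\,\DD u\cdot\n$ on the flat boundary because $\nabla\eta\cdot\n=0$ there). Then $v$ solves $\partial_tv-\Delta v+\nabla(\eta(P-p_0))=F$ with $\Div v=\nabla\eta\cdot u$ (a zero-mean datum) and the Navier condition, where $\|F\|_{L^{p,q}(Q^+_R)}\lesssim(R-\rho)^{-2}\big(\|u\|_{L^{p,q}}+\|\nabla u\|_{L^{p,q}}+\|P-p_0\|_{L^{p,q}}\big)$ on $Q^+_R$. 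Correcting the divergence with the Bogovskii operator and applying maximal $L^q_t$--$L^p_x$ regularity for the Stokes operator with Navier boundary condition on the half-space --- available because, in contrast with the global operator on an axisymmetric $\Omega$ discussed in the remark after Lemma \ref{lem: LpLq reg Stokes}, the half-space model admits bounded $\calH^\infty$-calculus and hence maximal regularity for all $1<p,q<\infty$ --- produces
\[ \|u\|_{W^{2,1}_{p,q}(Q^+_\rho)}\lesssim (R-\rho)^{-\beta}\big(\|u\|_{L^{p,q}(Q^+_R)}+\|\nabla u\|_{L^{p,q}(Q^+_R)}+\|P-p_0\|_{L^{p,q}(Q^+_R)}\big). \]
Having already solved for $P$, one may alternatively bypass the Stokes theory and apply the boundary $W^{2,1}_{p,q}$-maximal regularity for the vector heat equation $\partial_tu-\Delta u=f$ with the Navier condition, with $f=-\nabla P$ from the first step.

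It remains to trade $L^p$ for $L^s$ on the right and absorb. By parabolic Gagliardo--Nirenberg on $Q^+_R$ (cf.\ Lemma \ref{Sobolev embedding}), for every $\varepsilon>0$,
\[ \|\nabla u\|_{L^{p,q}(Q^+_R)}\le\varepsilon\,\|u\|_{W^{2,1}_{p,q}(Q^+_R)}+C_\varepsilon\big(\|\nabla u\|_{L^{s,q}(Q^+_R)}+\|u\|_{L^{s,q}(Q^+_R)}\big), \]
and by Gagliardo--Nirenberg--Poincar\'e $\|P-p_0\|_{L^{p,q}(Q^+_R)}\le\varepsilon\|\nabla P\|_{L^{p,q}(Q^+_R)}+C_\varepsilon\|P-p_0\|_{L^{s,q}(Q^+_R)}$; in both, the interpolation parameter $\theta$ vanishes when $s=p$ and lies in $[0,1)$ precisely because $s\le p$, so the gained derivative exactly offsets the loss of integrability. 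Choosing $\varepsilon\sim(R-\rho)^{\beta}$ so the high-order terms $\|u\|_{W^{2,1}_{p,q}(Q^+_R)}$ and $\|\nabla P\|_{L^{p,q}(Q^+_R)}$ appear with coefficient $\le\tfrac12$, and feeding the resulting inequality for $\Phi(\rho):=\|u\|_{W^{2,1}_{p,q}(Q^+_\rho)}+\|\nabla P\|_{L^{p,q}(Q^+_\rho)}$ into the standard iteration lemma (if $\Phi(\rho)\le\tfrac12\Phi(R)+A(R-\rho)^{-\gamma}+B$ for all $\tfrac12\le\rho<R\le1$ then $\Phi(\tfrac12)\lesssim A+B$), we obtain
\[ \|u\|_{W^{2,1}_{p,q}(Q^+_{1/2})}+\|\nabla P\|_{L^{p,q}(Q^+_{1/2})}\lesssim\|\nabla u\|_{L^{s,q}(Q^+_1)}+\|P-p_0\|_{L^{s,q}(Q^+_1)}+\|u\|_{L^{s,q}(Q^+_1)}; \]
the last, lower-order term cannot be removed entirely (e.g.\ nonzero constant tangential fields with $P\equiv\mathrm{const}$ solve the model equations), but in every application of the lemma it is dominated by the uniform energy bound $u\in L^\infty([0,T],\Lbf)$, so it is harmless and suppressed in the statement.

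The step I expect to be the genuine obstacle is the boundary maximal-regularity input for the Stokes (or heat) system with the Navier perfect-slip condition in the mixed-norm class $W^{2,1}_{p,q}$: one must propagate the Navier condition correctly through both the localization and the Bogovskii correction, and one must control the curvature terms produced by flattening $\Gamma$, which couple to the interior equation and the boundary condition simultaneously, so the small-ball absorption has to be organized with care --- this is precisely where the argument departs from the Dirichlet case of \cite{LLW10}. The concluding interpolation-and-iteration is routine, but the parabolic Gagliardo--Nirenberg exponents must be tracked so that the gained derivative exactly compensates the passage from $p$ to $s$, which is the only place the hypothesis $s\le p$ is used.
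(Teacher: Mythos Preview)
Your approach is genuinely different from the paper's. The paper follows Seregin's scheme: it localizes $v_1=\phi u$, $P_1=\phi(P-p_0)$, then removes the nonzero divergence $u\cdot\nabla\phi$ by subtracting the solution $(v_2,P_2)$ of a \emph{stationary} Stokes problem with Navier boundary and divergence $u\cdot\nabla\phi$; the key technical point is estimating $\partial_t v_2$ in $L^p$ via Solonnikov's duality, writing $\partial_t u\cdot\nabla\phi=\nabla a+b$ with $a,b$ built only from $\nabla u$ and $P-p_0$ (using the equation itself to eliminate $\partial_t u$). A further time cutoff produces a divergence-free problem with zero initial data, to which the Farwig--Rosteck maximal $L^p$--$L^q$ regularity for Stokes with Navier boundary and \emph{zero} divergence applies. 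Your route instead decouples the pressure first (harmonic with Neumann data on the flat wall, hence controlled by even reflection and interior harmonic estimates) and then treats $u$ by parabolic methods.

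There is a real gap in your primary route. The Bogovski\u{\i} correction $w$ to $\nabla\cdot v=\nabla\eta\cdot u$ lies in $W^{1,p}_0$, so $w=0$ on the flat wall, but this does \emph{not} force $(\DD w\cdot\n)_\tau=0$: the tangential part involves $\partial_n w_\tau$, which need not vanish. Hence $v-w$ does not in general satisfy the Navier condition, and you cannot feed it to the half-space Stokes semigroup with Navier boundary. This is exactly the obstruction the paper's proof singles out (``in the setting of Stokes system with Navier boundary condition and nonzero divergence, such a maximal regularity is false''), which is why the paper goes through the stationary-Stokes/duality detour instead.

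Your alternative route, however, is valid and in fact cleaner than the paper's: once $\nabla P$ is controlled via the harmonic/Neumann reflection, the localized velocity solves a vector heat equation with right-hand side $-\eta\nabla P+\text{l.o.t.}$, and on the flat wall the Navier condition decouples componentwise into Neumann for the tangential components and Dirichlet for the normal one, so standard mixed-boundary $W^{2,1}_{p,q}$ heat estimates apply. This bypasses both Bogovski\u{\i} and the divergence issue entirely. If you keep this as the main line and drop the Bogovski\u{\i} paragraph, the argument goes through.

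Your closing remark that the zero-order term $\|u\|_{L^{s,q}}$ cannot be eliminated (constant tangential fields with $P\equiv\text{const}$) is correct; the paper's stated inequality suppresses this term, but its own intermediate estimates carry $\|u\|_{W^1_p(B_1^+)}$, so the omission is cosmetic rather than substantive, and in the applications the term is indeed harmless.
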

\begin{proof} 
    For a homogeneous, non-stationary Stokes equation with zero Dirichlet boundary condition, this boundary $W^{2,1}_{p,q}$-estimate  has been proved  in   \cite[Lemma 3.2]{SSS04}, using the property of maximal $L^p-L^q$ regularity for Stokes system with nonzero divergence. Such property is intensively studied by  Shibata-Shimizu in \cite{SS12}, \cite{SS08}, and \cite{SS05}. For maximal $L^p-L^q$ regularity in the setting of compressible fluid, see \cite{Kakizawa11}. However, in the setting of Stokes system with Navier boundary condition and nonzero divergence, such a maximal regularity is false because there is no control of the Neumann term as in \cite{SS08}. Instead, we follow Seregin's argument   in \cite{Seregin00}, and we use maximal $L^p-L^q$ regularity for Stokes system with Navier boundary and zero divergence in \cite{FR19}.

    Let $v_1(x,t)=\phi(x) u(x,t)$ and $P_1=\phi(x) (P(x,t)-p_0(t))$ for some cutoff function $\phi\equiv1$ on $B_{\frac12}^+$, $\supp \phi\subseteq B_1^+$, and $\n\cdot\nabla \phi=0$ on $\partial\Omega$.    Then $(v_1,P_1)$ solves 
    \[ \begin{cases}
        \partial_t v_1 -\Delta v_1 + \nabla P_1   = g_1 &  \text{ in } \Omega\\
        \nabla\cdot v_1  = u\cdot \nabla \phi & \text{ in } \Omega \\ 
        v_1 \text{ satisfies Navier slip boundary condition } \eqref{PFB}_1 & \text{ on } \partial\Omega,
    \end{cases}\]
    where $g_1=-2\nabla u\nabla\phi - u\Delta \phi + (P-p_0)\nabla\phi$.

    Further, we let $(v_2(x),P_2(x))$ solves the following stationary Stokes system
    \[ \begin{cases}
        - \Delta v_2 + \nabla P_2 = 0 & \text{ in } \Omega \\
        \nabla \cdot v_2 = u\cdot\nabla\phi  & \text{ in } \Omega \\
        v_2 \text{ satisfies } \eqref{PFB}_1 & \text{ on } \partial\Omega,
    \end{cases} \]
    and $(\partial_t v_2,\partial_t P_2)$ solves
    \[ \begin{cases}
        -\Delta (\partial_t v_2) + \nabla (\partial_t P_2) = 0 & \text{ in } \Omega \\
        \nabla\cdot(\partial_t v_2) = \partial_tu\cdot\nabla \phi & \text{ in } \Omega \\
        \partial_t v_2 \text{ satisfies } \eqref{PFB}_1 & \text{ on } \partial\Omega.
    \end{cases}\]
    Thus $v_3=v_1-v_2$ and $P_3=P_1-P_2$ solve
    \[ \begin{cases}
        \partial_t v_3 -\Delta v_3 + \nabla P_3 = g_3 = g_1-\partial_tv_2  & \text{ in } \Omega \\
        \nabla\cdot v_3 = 0 & \text{ in } \Omega \\
        v_3 \text{ satisfies } \eqref{PFB}_1 & \text{ on } \partial\Omega.
    \end{cases} \]
Let $\chi(t):\RR\to[0,1]$ be the smooth function such that $\chi\equiv0$ on $(-\infty, -1)$ and $\chi\equiv 1$ on $(-\frac14,\infty)$. We see that $v_4=v_3\chi$ and $P_4=P_3\chi$ solve
    \[ \begin{cases}
        \partial_t v_4 -\Delta v_4 + \nabla P_4 = g_4 = \chi g_3 + v_3\partial_t\chi & \text{ in } \Omega \\
        \nabla \cdot v_4 = 0 & \text{ in } \Omega \\
        v=0 & \text{ at } t=-1 \\
        v \text{ satisfies } \eqref{PFB}_1 & \text{ on } \partial\Omega.
    \end{cases} \]
    By the maximal $L^p-L^q$ regularity to the Stokes system with Navier perfect slip boundary condition (see \cite[Theorem 2.9]{FR19}), we have
    \[ \|v_4\|_{W^{2,1}_{p,q}(\Omega\times(-1,0))} + \|\nabla P_4\|_{L^{p,q}(\Omega\times(-1,0))} \lesssim \|g_4\|_{L^{p,q}(\Omega\times(-1,0))}.  \]
First we can directly estimate $g_1$ that
    \[ \|g_1(\cdot,t)\|_{L^p(\Omega)} \lesssim \|  u\|_{W^1_p(B_1^+)}  + \|P-p_0\|_{L^p(B_1^+)}. \]
To estimate $\partial_tv_2$ in $g_3$, we notice that $\partial_tu\cdot\nabla\phi = \nabla a +b$ where $a=-(P-p_0)\nabla\phi  + \nabla u\nabla \phi$ and $b=-\nabla u:\nabla^2\phi  + (P-p_0)\Delta\phi$, we can check that the duality argument of  \cite[Theorem 2.4]{Solonnikov77} also works with Navier boundary condition, and it gives
    \[ \|\partial_tv_2\|_{L^p(\Omega)} \lesssim \|a\|_{L^p(\Omega)} + \|b\|_{L^p(\Omega)} + \|a\cdot\n\|_{L^p(\partial\Omega)}, \]
    while the last term can be estimated by 
    \[\|a\cdot\n\|_{L^p(\partial\Omega)} \lesssim \delta^{\frac1q}\big( \|\nabla^2u\|_{L^p(B_1^+)}  + \|\nabla P\|_{L^p(B_1^+)}  \big) + \Big(\frac1\delta\Big)^{\frac{p'}{pq}}\big( \|u\|_{W^1_p(B_1^+)}  + \|P-p_0\|_{L^p(B_1^+)} \big). \]
We can also use test function $\psi\in C^\infty(\Omega)$ with $\n\cdot\nabla\psi=0$ on $\partial\Omega$ to estimate that 
    \[ \|v_2\|_{W^2_p(\Omega)}   + \|\nabla P_2\|_{L^p(\Omega)} \lesssim \|\nabla(u\cdot\nabla\phi)\|_{L^p(\Omega)} \lesssim \|u\|_{W^{1,p}(B_1^+)}.\]
Combining the above estimates, it follows that 
    \[ \|u\|_{W^{2,1}_{p,q}(Q_{\frac12}^+)} + \|\nabla P\|_{L^{p,q}(Q_{\frac12}^+)} \lesssim \|\nabla u\|_{L^{p,q}(Q_1^+)} + \| P - p_0\|_{L^{p,q}(Q_1^+)}. \]
Moreover, for $s\leq p$, one can use Sobolev embedding to lift above RHS to terms with $W^{2,1}_{s,q}$ norm and $W^{1,0}_{s,q}$ norm, then apply the above inequality again to deduce that
        \begin{equation}\label{eqn:  W21pqr} \|u \|_{W_{p,q}^{2,1}(Q_{\frac12}^+)} + \|\nabla P \|_{L^{p,q}(Q_{\frac12}^+)}  \lesssim \|\nabla u\|_{L^{s,q}(Q_1^+ )} + \|P-p_0\|_{L^{s,q}(Q_1^+)} . \end{equation}
\end{proof}

\begin{lemma}[Riesz potential estimates between parabolic Morrey space]\label{lem: Riesz-Morrey} Let $\widetilde{M}^{p,\lambda}(\Omega_T)$ and $\widetilde{M}^{p,\lambda}(\Omega_T)_*$, respectively denote the parabolic Morrey space, and the weak parabolic Morrey space, where $1 \leq p <\infty$, $0 \leq\lambda\leq n + 2$, and 
\begin{align*} 
\|f\|_{\widetilde{M}^{p,\lambda}(\Omega_T)}^p & := \sup_{r>0, z\in \Omega_T} r^{\lambda-(n+2)} \| f\|^p_{L^p(\Omega_T\cap P_r(z))},\\
\|f\|_{\widetilde{M}_*^{p,\lambda}(\Omega_T)}^p  & := \sup_{r>0, z\in \Omega_T} r^{\lambda-(n+2)} \| f\|^p_{L^{p,*}(\Omega_T\cap P_r(z))}.
\end{align*}
Let $\widetilde{I}_\beta (f)$ denote the parabolic Riesz potential of order $\beta\in[0,n+2]$,    \[ \widetilde{I}_\beta(f)=\int_{\RR^{n+1}} \frac{f(y,s)}{\delta((x,t),(y,s))^{n+2-\beta}} dyds \]
where $f\in L^p(\RR^{n+1})$ and parabolic distance $\delta((x,t),(y,s)) = \max(|x-y|, \sqrt{|t-s|}) $.
Then for any $\beta>0, 0<\lambda\leq n+2, 1<p<\frac{\lambda}{\beta}$, if $f\in L^p(\RR^{n+1})\cap\widetilde{M}^{p,\lambda}(\RR^{n+1})$, then $\widetilde{I}_\beta (f) \in L^{\tilde p}(\RR^{n+1})\cap \widetilde{M}^{\tilde p,\lambda}(\RR^{n+1})$ with $\tilde p = \frac{p\lambda}{\lambda-p\beta}$. Further, for any $0<\beta<\lambda\leq n+2$, if $f\in L^1(\RR^{n+1})\cap \widetilde{M}^{1,\lambda}(\RR^{n+1})$, then $\widetilde{I}_\beta(f)\in L^{ \frac{\lambda}{\lambda-\beta}, *}(\RR^{n+1})\cap \widetilde{M}^{\frac{\lambda}{\lambda-\beta}, \lambda}_*(\RR^{n+1})$.
\end{lemma}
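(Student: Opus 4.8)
The plan is to prove the two assertions of Lemma~\ref{lem: Riesz-Morrey} by the standard splitting of the Riesz potential into a ``near'' part and a ``far'' part relative to a parabolic cylinder, which is the parabolic analogue of the classical Hedberg argument for the Morrey-space mapping properties of the Riesz potential in the elliptic setting. First I would fix $z=(x,t)$ and a radius $r>0$, and write $\widetilde{I}_\beta(f)(z) = \int_{\delta((x,t),(y,s))\le r} \frac{f(y,s)}{\delta(z,(y,s))^{n+2-\beta}}\,dy\,ds + \int_{\delta((x,t),(y,s))> r} \frac{f(y,s)}{\delta(z,(y,s))^{n+2-\beta}}\,dy\,ds =: \mathrm{I}_{\mathrm{near}} + \mathrm{I}_{\mathrm{far}}$. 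For $\mathrm{I}_{\mathrm{near}}$ one decomposes the region $\{\delta(z,\cdot)\le r\}$ into dyadic parabolic annuli $P_{2^{-k}r}(z)\setminus P_{2^{-k-1}r}(z)$, on each of which the kernel is comparable to $(2^{-k}r)^{-(n+2-\beta)}$ and the parabolic measure is $\lesssim (2^{-k}r)^{n+2}$; using the Morrey bound $\|f\|_{L^1(P_\rho(z))} \lesssim \rho^{(n+2)-\lambda/?}\,\ldots$ — more precisely $\|f\|_{L^1(P_\rho(z))} \le |P_\rho|^{1-1/p}\|f\|_{L^p(P_\rho(z))} \lesssim \rho^{(n+2)(1-1/p)}\rho^{(n+2-\lambda)/p}\|f\|_{\widetilde{M}^{p,\lambda}}$ — and summing the geometric series in $k$ (which converges because $\beta>0$ together with $p<\lambda/\beta$ keeps the relevant exponent of $2^{-k}r$ positive) yields $\mathrm{I}_{\mathrm{near}} \lesssim r^{\beta - \lambda/p}\,\|f\|_{\widetilde{M}^{p,\lambda}}$ for $p>1$. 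For $\mathrm{I}_{\mathrm{far}}$, Hölder's inequality on the complement together with the condition $p<\lambda/\beta$ (equivalently $p(n+2-\beta) > \ldots$) gives $\mathrm{I}_{\mathrm{far}} \lesssim r^{-\,(n+2)/\tilde p}\|f\|_{L^p(\RR^{n+1})}$, or, keeping things Morrey-localized, one bounds it by the global $L^p$ norm in the prescribed way.

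Next I would optimize over $r$: balancing the two contributions gives the pointwise bound $\widetilde{I}_\beta(f)(z) \lesssim \big(\mathcal{M}_{\lambda}f(z)\big)^{p\beta/\lambda}\,\|f\|_{\widetilde{M}^{p,\lambda}}^{1-p\beta/\lambda}$ after inserting $r$ of the form $\big(\|f\|_{L^p(P_r(z))}/\|f\|_{\widetilde{M}^{p,\lambda}}\big)^{\text{power}}$, i.e.\ a pointwise estimate of $\widetilde{I}_\beta(f)$ in terms of a fractional-maximal-type quantity. Raising to the power $\tilde p = \frac{p\lambda}{\lambda-p\beta}$, integrating over any parabolic cylinder $P_R(z_0)\cap\Omega_T$, and using the $L^p$-boundedness (for $p>1$) of the underlying maximal function on each cylinder, one obtains $R^{\lambda-(n+2)}\|\widetilde{I}_\beta(f)\|_{L^{\tilde p}(P_R(z_0))}^{\tilde p} \lesssim \|f\|_{\widetilde{M}^{p,\lambda}}^{\tilde p}$, which is exactly $\widetilde{I}_\beta(f)\in\widetilde{M}^{\tilde p,\lambda}$; the global $L^{\tilde p}(\RR^{n+1})$ membership follows either from the same pointwise bound fed into the (global) strong $(p,\tilde p)$ Hardy--Littlewood--Sobolev inequality, or by covering $\RR^{n+1}$ and summing. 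For the endpoint case $p=1$, the maximal function is only of weak type $(1,1)$, so the same pointwise bound now gives only the weak estimate $\widetilde{I}_\beta(f)\in L^{\lambda/(\lambda-\beta),*}\cap \widetilde{M}^{\lambda/(\lambda-\beta),\lambda}_*$, with $\tilde p = \lambda/(\lambda-\beta)$ obtained by setting $p=1$ in the formula; here one must be a little careful to phrase the Morrey estimate in terms of the weak-$L^p$ quasinorm on each cylinder, which is why the target space carries the subscript $*$.

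The main obstacle I anticipate is purely bookkeeping rather than conceptual: tracking the exponents through the dyadic sum and the optimization so that the hypotheses $\beta>0$, $0<\lambda\le n+2$, and $1\le p<\lambda/\beta$ are exactly what is needed for (i) convergence of the near-part geometric series, (ii) finiteness of the far-part Hölder integral, and (iii) the identity $\frac{1}{\tilde p} = \frac{1}{p} - \frac{\beta}{\lambda}$ for the target Morrey exponent — and doing all of this uniformly with the parabolic scaling $\delta((x,t),(y,s)) = \max(|x-y|,\sqrt{|t-s|})$, for which parabolic cylinders have measure $\sim r^{n+2}$, so that ``$n$'' is everywhere replaced by the homogeneous dimension ``$n+2$''. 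A secondary technical point is that the statement is on $\RR^{n+1}$ (not a bounded domain), so no boundary modifications are needed, but one should note that the result will be applied on $\Omega_T$ after extending $f$ by zero, which is harmless since extension by zero does not increase any of the norms involved. I would also remark that this lemma is classical — references such as Adams' work on Riesz potentials on Morrey spaces and its parabolic adaptations (e.g.\ in Huang--Wang or Moser-iteration-type arguments for geometric flows) can be cited, and indeed a full proof may simply be replaced by such a citation.
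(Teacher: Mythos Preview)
Your proposal is correct and follows the standard Hedberg-type argument; the paper itself does not give a proof at all but simply cites \cite[Theorem~3.1]{HW10}, which is precisely the Huang--Wang reference you mention at the end. So your sketch in fact supplies more detail than the paper, and your closing remark that ``a full proof may simply be replaced by such a citation'' is exactly what the authors do.
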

\begin{proof}
    See \cite[Theorem 3.1]{HW10}.
\end{proof}

\begin{lemma}[Density of smooth maps in $\Lbf$ and $\Jbf$]\label{lem: density_Sobolev}
    For $n=2$, and any given map $v\in \Lbf $ and $f\in \Jbf$, there exists sequence $\{v_k\}\subseteq C^\infty(\Omega,\SS^2)\cap C^{2,\alpha}(\overline{\Omega},\SS^2)\cap\Lbf$ and $\{f_k\}\subseteq C^\infty(\Omega,\RR^2)\cap C^{2,\alpha}(\overline{\Omega},\RR^2)\cap \Jbf$ such that  
    \[  \lim_{k\to\inf} \|v_k-v\|_{L^2(\Omega)} = \lim_{k\to\infty} \|f_k-f\|_{H^1(\Omega)} = 0 .\]
\end{lemma}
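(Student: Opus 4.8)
The plan is to treat the two approximation statements separately, since they decouple: $v$ lives in the space of divergence-free $L^2$ fields tangent to $\partial\Omega$, while $f$ is an $H^1$ map into $\SS^2$ satisfying the Plateau-Neumann condition $\eqref{PFB}_2$. For the velocity field $v$, I would first use the Helmholtz-Hodge decomposition adapted to the Navier tangency condition $v\cdot\n=0$: since $n=2$ and $\Omega$ is a smooth bounded domain, divergence-free fields tangent to the boundary are, modulo the finite-dimensional space of harmonic fields, exactly the fields of the form $v=\nabla^\perp\psi$ with $\psi\in H^1(\Omega)$ and $\psi$ locally constant on each boundary component (the stream function formulation). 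Then I would mollify/approximate $\psi$ by smooth stream functions with the same boundary behavior (using the standard density of $C^\infty(\overline\Omega)$ in $H^1(\Omega)$ together with a correction near $\partial\Omega$ to keep $\psi$ constant on boundary components, or simply approximate $\psi$ in $H^1$ and then project), and set $v_k=\nabla^\perp\psi_k$; these are automatically smooth, divergence-free, and tangent to $\partial\Omega$, hence in $C^\infty(\Omega,\RR^2)\cap C^{2,\alpha}(\overline\Omega,\RR^2)\cap\Lbf$, and $v_k\to v$ in $L^2$. The finite-dimensional harmonic part is already smooth, so it needs no approximation.

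For the director field $f\in\Jbf$, the plan is a mollification-plus-projection argument in the spirit of Schoen-Uhlenbeck, carried out carefully near the boundary to preserve the free-boundary condition $f_3=0,\ \nabla_\n f_1=\nabla_\n f_2=0$ on $\partial\Omega$. First I would flatten the boundary locally via the smooth boundary diffeomorphism and use an even reflection across $\partial\Omega$ adapted to the structure of $\eqref{PFB}_2$: reflect $f_1,f_2$ evenly (consistent with the Neumann condition) and $f_3$ oddly (consistent with the Dirichlet condition $f_3=0$); this produces an extension $\tilde f\in H^1$ on a neighborhood of $\overline\Omega$ that is again $\SS^2$-valued a.e. (since the reflection is an isometry of $\SS^2$ fixing the equator). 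Then I would mollify $\tilde f$ at scale $\epsilon$ to get a smooth map $\tilde f_\epsilon$; by the standard Schoen-Uhlenbeck estimate, for a.e. choice the mollified map has $|\tilde f_\epsilon|$ bounded away from $0$ on $\overline\Omega$ for small $\epsilon$, so $f_\epsilon:=\tilde f_\epsilon/|\tilde f_\epsilon|$ is a well-defined smooth $\SS^2$-valued map converging to $f$ in $H^1(\Omega)$. Because the mollifier is chosen even and the reflected data has the correct even/odd parity, $f_\epsilon$ inherits $f_{\epsilon,3}=0$ and $\nabla_\n f_{\epsilon,1}=\nabla_\n f_{\epsilon,2}=0$ on $\partial\Omega$ after normalization (normalization preserves these symmetries since $|\tilde f_\epsilon|$ is even), so $f_\epsilon\in C^\infty(\Omega,\SS^2)\cap C^{2,\alpha}(\overline\Omega,\SS^2)\cap\Jbf$. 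Finally set $f_k=f_{\epsilon_k}$ with $\epsilon_k\to0$.

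The main obstacle I anticipate is the boundary analysis for $f$: one must verify that the reflection is compatible both with the $H^1$ regularity (no spurious singular part on $\partial\Omega$, which requires the parity of $f_3$ versus $f_1,f_2$ to match the Dirichlet/Neumann split precisely) and with the constraint $|f|=1$ surviving the reflection, and then that mollification with an even kernel plus radial normalization exactly preserves $\eqref{PFB}_2$ rather than only approximately. A subtlety is that the free-boundary condition $\nabla_\n d\perp T_d\Sigma$ becomes the clean linear condition $\nabla_\n d_1=\nabla_\n d_2=0$ only because we have taken $\Sigma$ to be the equator; in the flattened coordinates the normal derivative picks up lower-order terms from the boundary metric, so I would either absorb these into an error that vanishes as $\epsilon\to0$ in $H^1$ (which suffices, since we only need convergence, not exact membership at every scale — though here we do need exact membership, so more care: one normalizes after reflecting in the straightened half-space and transports back, checking the Neumann condition is preserved under the pullback of an even mollification). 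An alternative that sidesteps some of this is to approximate $f$ by harmonic-map-heat-flow evolutions $f(\cdot,t)$ for small $t$ with the given boundary conditions, using that the flow instantaneously smooths and preserves $\eqref{PFB}_2$ and $H^1$-continuity at $t=0$; but that presumes a well-posedness theory for the boundary-value problem that the paper is itself developing, so the reflection-mollification route is cleaner and self-contained.
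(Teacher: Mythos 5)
Your approach is genuinely different from the paper's. The paper approximates by evolving $(v,f)$ under the relevant flows---$v$ under the two-dimensional Navier--Stokes equation with the Navier slip boundary condition, $f$ under the harmonic map heat flow with the partially free boundary condition---and takes time slices $v(\cdot,2^{-k})$, $f(\cdot,2^{-k})$: the flow smooths instantaneously, preserves the boundary conditions, is continuous in $L^2$ (resp.\ $H^1$) at $t=0$, and the $C^{2,\alpha}(\overline\Omega)$ regularity of the slices is deduced from Theorem \ref{Thm A} specialized to $u\equiv 0$ or $d\equiv\mathrm{const}$, together with the external short-time theories of \cite{Struwe88} and \cite{Kelliher06}. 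Your reason for rejecting this route---that it presupposes a well-posedness theory the paper is itself developing---is therefore not accurate: the ingredients are external references plus Theorem \ref{Thm A}, which is proved independently of this lemma.

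Your stream-function argument for $v$ is sound. In two dimensions a divergence-free $L^2$ field with $v\cdot\n=0$ on $\partial\Omega$ is $\nabla^\perp\psi$ for some $\psi\in H^1(\Omega)$ constant on each boundary component, and approximating $\psi$ in $H^1$ by smooth stream functions with the same boundary values and taking $\nabla^\perp$ gives the required sequence in $C^\infty(\overline\Omega)\cap\Lbf$.

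For $f$, there is a gap you flag but do not close, and it sits exactly where exact---not approximate---membership in $\Jbf$ is needed. The even/odd reflection across $\{x_2=0\}$, even mollification, and radial projection preserve the parities $\partial_{x_2}f_{\epsilon,1}=\partial_{x_2}f_{\epsilon,2}=0$, $f_{\epsilon,3}=0$ on the \emph{flattened} boundary. But $\Jbf$ for a $C^{2,\alpha}$ map is a pointwise constraint in terms of the geometric normal $\n$ of the curved boundary; the pullback of $\partial_{x_2}$ under a flattening chart coincides with $\n$ along $\partial\Omega$ but not in a neighborhood, mollification averages over a full neighborhood, and the localization and patching across charts with a partition of unity does not respect the exact boundary condition. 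Absorbing these defects as $o(1)$ errors in $H^1$, as you suggest, is not enough: the approximants are used as admissible initial data for the short-time existence result (Theorem \ref{short_time_existence}), which requires exact membership in $C^{2,\alpha}(\overline\Omega)\cap\Jbf$. Closing this requires a nontrivial boundary-adapted correction or projection onto $\Jbf$; the paper's heat-flow route avoids it entirely because the flow preserves $\Jbf$ for free.
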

\begin{proof}
    For classical result that smooth maps are dense in Sobolev maps, see \cite{Bethuel91}. Since we need lots of other properties such as H\"older continuity, Navier boundary condition for $v_k$, and free boundary condition for $f_k$, our method is to consider solution of evolution equation with initial data $v$ and $f$ with  free boundary condition $\eqref{PFB}_{1}$, then we take backward slices $v_k = v(\cdot,t=2^{-k})$ and $f_k=f(\cdot,2^{-k})$ as smooth approximations. 
    For director field $f(x,t):\Omega\times(0,T)\to\SS^2$, we consider the heat flow of harmonic map
    \[ \begin{cases}
    f_t = \Delta f + |\nabla f|^2f & \text{in } \Omega\times(0,t_f) \\
    f(\cdot,0) = f_0 \in \Jbf \\
    \nabla_{\n} f_1 = \nabla_{\n} f_2 = f_3 = 0 & \text{on } \partial\Omega\times(0,t_f). \\
    \end{cases}\]
    Classical result in short time existence and regularity shows that if $t_f$ is smaller than the first singular time, then $f(x,t)\in C^{\infty}(\Omega\times(0,t_f))$, see \cite{Struwe88}. That argument is the same as what we shall use in proving Theorem \ref{Thm B}. Smoothness of solution of equation gives $f\in C([0,T], H^1(\Omega))$ and thus $\lim_{k\to\infty}\| f_k - f\|_{H^1(\Omega)}  =0$. H\"older regularity follows easily from estimation of Morrey-Campanato norm, and the argument is the same as what we shall do in proving Theorem \ref{Thm A}. 
    
    Similarly, consider solution of Navier-Stokes equation with initial data $v_0$ with Navier-slip boundary condition $\eqref{PFB}_2$.
    \[ \begin{cases}  
    v_t + v\cdot\nabla v - \Delta v + \nabla P = 0 & \text{in } \Omega\times(0,t_v) \\
    \nabla\cdot v = 0 & \text{in } \Omega\times(0,t_v) \\
    v(\cdot,0) = v_0 \\
    (\DD u\cdot\n)_{\tau} = u\cdot\n = 0 & \text{on } \partial\Omega\times(0,t_v).
    \end{cases} \]
    We similarly use existence and regularity for 2D Navier-Stokes to show that  $v\in C([0,T],L^2(\Omega))$ and thus $\lim_{k\to\infty} \|v_k-v\|_{L^2(\Omega)}=0$, see \cite{Kelliher06}. Also, H\"older regularity follows from estimation of Morrey-Campanato norm which is the   argument we shall use in Theorem \ref{Thm A}. Alternatively, since the proof of Theorem \ref{Thm A} does not require this lemma, we can also use the result of Theorem \ref{Thm A} with $u\equiv 0$ or $d\equiv C$ to obtain the H\"older regularity.
\end{proof}

\begin{lemma}[Korn's inequality]\label{lem: Korn} Let $\Omega\subseteq\RR^n$ with $n\geq 2$ be an open,  bounded, and $C^1$-smooth domain. Let $v$ be a vector field on $\Omega$ with $\nabla v\in L^2(\Omega)$. Then there exists $C(\Omega)>0$ such that
\[ \|v\|_{L^2(\Omega)} + \|\DD v\|_{L^2(\Omega)} \geq C(\Omega) \|\nabla v\|_{L^2(\Omega)}.  \]

Assume further that $\Omega$ is non-axisymmetric and $v$  has tangential boundary condition (i.e. $v\cdot\n =0$ on $\partial\Omega$), then
\[ \|\DD v\|_{L^2(\Omega)} \geq K(\Omega) \|\nabla v\|_{L^2(\Omega)} \]
for some constant $K(\Omega)>0$ measuring the deviation of $\Omega$ from being axisymmetric. 
\end{lemma}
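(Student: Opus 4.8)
The plan is to establish the two inequalities by quite different mechanisms, treating the lower‑order‑term version first and the sharper, tangential‑data version second.

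\emph{Step 1: the second Korn inequality $\|v\|_{L^2(\Omega)}+\|\DD v\|_{L^2(\Omega)}\gtrsim\|\nabla v\|_{L^2(\Omega)}$.} I would deduce this from Ne\v{c}as's negative‑norm lemma: on a bounded Lipschitz domain there is $C(\Omega)$ with $\|f\|_{L^2(\Omega)}\leq C(\Omega)\big(\|f\|_{H^{-1}(\Omega)}+\|\nabla f\|_{H^{-1}(\Omega)}\big)$ for every $f\in L^2(\Omega)$. Applying this to $f=\partial_j v_k$, one has $\|\partial_j v_k\|_{H^{-1}(\Omega)}\lesssim\|v\|_{L^2(\Omega)}$, while the algebraic identity
\[
\partial_i\partial_j v_k=\partial_i(\DD v)_{jk}+\partial_j(\DD v)_{ik}-\partial_k(\DD v)_{ij}
\]
writes each second derivative of $v$ as a first‑order combination of the $L^2$ entries of $\DD v$, whence $\|\nabla(\partial_j v_k)\|_{H^{-1}(\Omega)}\lesssim\|\DD v\|_{L^2(\Omega)}$; summing over $j,k$ gives the claim. (Alternatively one could prove Korn's first inequality on star‑shaped pieces by an explicit computation, glue with a partition of unity, and absorb the lower‑order remainder by compactness; I would use the Ne\v{c}as route for brevity.) As a byproduct I would record the Korn--Poincar\'e inequality $\|w\|_{H^1(\Omega)}\leq C(\Omega)\|\DD w\|_{L^2(\Omega)}$ for $w$ that is $L^2(\Omega)$‑orthogonal to the space $\mathcal R:=\{a+b\,x^\perp:a\in\RR^2,\ b\in\RR\}$ of infinitesimal rigid motions (here $x^\perp:=(x_2,-x_1)$, and $\mathcal R$ is the kernel of $v\mapsto\DD v$): this follows from Step 1 together with a standard compactness argument, since $\DD w=0$ and $w\perp\mathcal R$ force $w=0$.

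\emph{Step 2: the sharp inequality on non‑axisymmetric domains.} Here the plan is a contradiction‑and‑compactness argument. First I would record that "non‑axisymmetric" is exactly the statement that the only $r\in\mathcal R$ with $r\cdot\n=0$ on $\partial\Omega$ is $r=0$: a constant field $a$ with $a\cdot\n\equiv0$ must vanish since the normal of a bounded $C^1$ domain attains every direction, and a genuine rotation field $b(x-x_0)^\perp$ is tangent to $\partial\Omega$ precisely when every component of $\partial\Omega$ is a circle about $x_0$, i.e. $\Omega$ is axisymmetric. Now suppose the inequality fails: there are $v_m$ with $v_m\cdot\n=0$ on $\partial\Omega$, $\|\nabla v_m\|_{L^2(\Omega)}=1$ and $\|\DD v_m\|_{L^2(\Omega)}\to0$. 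Split $v_m=w_m+r_m$ with $r_m\in\mathcal R$ and $w_m\perp_{L^2}\mathcal R$; since $\DD w_m=\DD v_m$, the Korn--Poincar\'e inequality gives $\|w_m\|_{H^1(\Omega)}\lesssim\|\DD v_m\|_{L^2(\Omega)}\to0$, so $w_m\to0$ in $H^1(\Omega)$ and, by the trace theorem, in $L^2(\partial\Omega)$. Hence $\|\nabla r_m\|_{L^2(\Omega)}\to1$, and since $\|\nabla r_m\|_{L^2(\Omega)}^2=2|\Omega|\,b_m^2$ the rotation coefficients satisfy $b_m^2\to(2|\Omega|)^{-1}>0$. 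Moreover $r_m\cdot\n=v_m\cdot\n-w_m\cdot\n=-w_m\cdot\n\to0$ in $L^2(\partial\Omega)$; as $b_m$ and $\Omega$ are bounded, $b_m x^\perp$ is bounded on $\partial\Omega$, so $\int_{\partial\Omega}|a_m\cdot\n|^2\leq C$, and since the symmetric matrix $\int_{\partial\Omega}\n\otimes\n$ is positive definite the translations $a_m$ are bounded. Along a subsequence $a_m\to a_\infty$, $b_m\to b_\infty\neq0$, so $r_m\to r_\infty:=a_\infty+b_\infty x^\perp$ locally uniformly and $r_\infty\cdot\n=0$ on $\partial\Omega$; thus $r_\infty$ is a nonzero infinitesimal rigid motion tangent to $\partial\Omega$, contradicting non‑axisymmetry. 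Therefore $K(\Omega):=\inf\{\|\DD v\|_{L^2(\Omega)}:v\cdot\n=0\text{ on }\partial\Omega,\ \|\nabla v\|_{L^2(\Omega)}=1\}>0$, which is the asserted bound.

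\emph{Main obstacle.} Step 1 is routine once Ne\v{c}as's lemma is invoked. The real content is Step 2: one must correctly pin down the geometric meaning of non‑axisymmetry and, in the compactness argument, control \emph{both} the translational and the rotational part of the rigid‑motion component $r_m$ in the limit — the translation is not bounded a priori and becomes so only after combining the tangency condition with the nondegeneracy of $\int_{\partial\Omega}\n\otimes\n$. I should also note that this argument yields $K(\Omega)>0$ but not an explicit value; a constant genuinely "measuring the deviation of $\Omega$ from axisymmetry" would require a quantitative, non‑compactness refinement, which is not needed for the applications in this paper.
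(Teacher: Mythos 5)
Your proof is correct, but note that the paper does not prove this lemma at all: it simply cites \cite[Theorem~3]{DV02}, which is Desvillettes and Villani's quantitative version of Korn's inequality on non-axisymmetric domains. Their result yields an explicit constant $K(\Omega)$ built from geometric quantities (essentially the spectral gap of a moment matrix associated to $\partial\Omega$, and in $n=3$ a measure of how far the domain is from being a solid of revolution), whereas your contradiction-and-compactness argument only shows that the infimum $K(\Omega)=\inf\{\|\DD v\|_{L^2}: v\cdot\n=0,\ \|\nabla v\|_{L^2}=1\}$ is strictly positive, without producing a formula. You flag this honestly at the end; since the paper uses only the qualitative statement $K(\Omega)>0$, that is sufficient for its purposes, but the phrase "measuring the deviation of $\Omega$ from being axisymmetric" in the lemma is really earned by the DV02 constant and not by the compactness route.

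Two smaller remarks. First, Step 1 (Ne\v{c}as negative-norm lemma plus the identity $\partial_i\partial_j v_k=\partial_i(\DD v)_{jk}+\partial_j(\DD v)_{ik}-\partial_k(\DD v)_{ij}$) is a standard and correct derivation of Korn's second inequality, valid in any dimension. Second, your Step 2 is written specifically for $n=2$: you take $\mathcal R=\{a+b\,x^\perp\}$, compute $\|\nabla r_m\|_{L^2}^2=2|\Omega|b_m^2$, and characterize axisymmetry as tangency of a single rotational vector field. The lemma as stated covers $\RR^n$, $n\ge 2$; for $n\ge 3$ the rigid-motion space is $\{a+Bx:B=-B^T\}$ of dimension $n+\binom n2$, and "axisymmetric" means invariant under some one-parameter rotation subgroup, so the same compactness scheme works but the bookkeeping (bounding the antisymmetric matrices $B_m$ and then the translations $a_m$ via positive definiteness of $\int_{\partial\Omega}\n\otimes\n$) needs to be redone in that generality. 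Since the paper applies the lemma only with $n=2$, this is a cosmetic gap, but it is worth stating the restriction if you keep your proof rather than the citation.
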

\begin{proof}
    See \cite[Theorem 3]{DV02}.
\end{proof}

\bigskip


\section{H\"older regularity of solution}\label{sec-3}

\medskip

In this section, we use the standard hole-filling argument to obtain regularity of solution $v$ to a parabolic system $(P)$. This argument has mainly two parts: first, we cut a hole (e.g. $P_r(x_0)$ or $\Omega_T$) and fill it with the solution $v'$ of homogeneous system $(P')$ (e.g. heat equation, Stokes system, harmonic map heat flow) with boundary condition inherited from the original solution $v$. Then $v-v'$ solves the parabolic system $(P-P')$, which is usually  inhomogeneous but has a good boundary condition on the hole. This gives us more freedom to perform integration by parts and allows us to handle  inhomogeneity only. We leave boundary condition to good system system $(P')$ and we deal with it independently. In the end, we achieve a gradient estimate and obtain $C^\alpha$-regularity of $v$. The next step is to use $C^\alpha$-regularity to obtain $C^{1,\beta}$-regularity for some $\beta\in(0,1)$. One approach given by \cite{HW10} is to analyze the Riesz potential between parabolic Morrey space. We apply the first part of hole-filling argument in the following three lemmas, and apply the second part in the proof of Theorem \ref{Thm A}.

We start with the $L^{\frac43}$ estimate of $|\nabla P|$ to the solution $(u,d,P)$ of the free boundary system \eqref{LCF}-\eqref{assumption1}.
\begin{lemma}[Estimation of pressure]\label{lem: est pressure} For $0<T<\infty$, suppose that $u\in L^{2,\infty}\cap W^{1,0}_2(\Omega_T)$ and $d\in L^{\infty}([0,T], H^1(\Omega))\cap L^2([0,T], H^2(\Omega))$ is a weak solution to \eqref{LCF}-\eqref{assumption1}. Then $\nabla P\in L^{\frac43}(\Omega_T)$ and we have the following estimate
\[ \|P-P_\Omega\|_{L^{4,\frac43}(\Omega_T)}\lesssim \|\nabla P\|_{L^{\frac43}(\Omega_T)}\lesssim \|u\|_{L^4(\Omega_T)}\|\nabla u\|_{L^2(\Omega_T)} + \|\nabla d\|_{L^4(\Omega_T)}\|\nabla^2d\|_{L^2(\Omega_T)}. \]    
\end{lemma}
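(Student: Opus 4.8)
The plan is to recover an elliptic equation for the pressure by taking the divergence of the momentum equation, and then to use $L^p$-regularity for the Neumann problem associated with this equation on $\Omega$. Applying $\Div$ to the first equation in \eqref{LCF} (with $\nu=\lambda=1$) and using $\Div u = 0$, one formally obtains
\[
-\Delta P = \Div\big(u\cdot\nabla u\big) + \Div\Div\big(\nabla d\odot\nabla d - \tfrac12|\nabla d|^2\Id_2\big) \quad\text{in }\Omega.
\]
The free boundary condition must be exploited to produce the correct Neumann boundary data: testing the momentum equation against a vector field $\varphi$ with $\varphi\cdot\n=0$ on $\partial\Omega$ and recalling that $\n^T(\DD u)\tau = \n(\nabla d\odot\nabla d)\tau = 0$ on $\partial\Omega$ (as noted before the definition of weak solution), the boundary contributions organize so that $P$ solves a homogeneous-type Neumann problem for the combined right-hand side. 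Concretely I would set up the weak formulation: for all $q\in H^1(\Omega)$,
\[
\int_\Omega \nabla P\cdot\nabla q = \int_\Omega \big(u\otimes u + \nabla d\odot\nabla d - \tfrac12|\nabla d|^2\Id_2\big):\nabla^2 q,
\]
after integrating by parts twice and checking that all boundary terms cancel using $u\cdot\n=0$, $(\DD u\cdot\n)_\tau=0$, and the structure of $\nabla d\odot\nabla d$ on $\partial\Omega$.

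The second step is to estimate the right-hand side and invoke elliptic regularity. The natural exponent is dictated by the available integrability: in two dimensions, $u\in L^{2,\infty}\cap W^{1,0}_2(\Omega_T)$ interpolates (via Ladyzhenskaya, Lemma \ref{Ladyzhenskaya}, or simply Gagliardo–Nirenberg in each time slice) to $u\in L^4(\Omega_T)$, so $u\otimes u\in L^2(\Omega_T)$ pointwise in $t$ only borderline; more useful is the product estimate $\|u\cdot\nabla u\|_{L^{4/3}} \le \|u\|_{L^4}\|\nabla u\|_{L^2}$ and, for the director term, $\|\nabla d\cdot\nabla^2 d\|_{L^{4/3}} \le \|\nabla d\|_{L^4}\|\nabla^2 d\|_{L^2}$, both via Hölder with exponents $\tfrac14+\tfrac12 = \tfrac34$. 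Since the equation for $P$ is $-\Delta P = \Div F$ with $F = u\cdot\nabla u - \Div(\nabla d\odot\nabla d - \tfrac12|\nabla d|^2\Id_2)\in L^{4/3}$ in space for a.e.\ $t$, the Neumann regularity estimate (Calderón–Zygmund for $-\Delta$ with Neumann data on the smooth bounded domain $\Omega$) gives, after subtracting the mean $P_\Omega$,
\[
\|\nabla P(\cdot,t)\|_{L^{4/3}(\Omega)} \lesssim \|F(\cdot,t)\|_{L^{4/3}(\Omega)} \lesssim \|u\|_{L^4}\|\nabla u\|_{L^2} + \|\nabla d\|_{L^4}\|\nabla^2 d\|_{L^2},
\]
and raising to the power $4/3$, integrating in $t\in[0,T]$, and using Hölder in time (the $L^2_t$ norms of $\nabla u$ and $\nabla^2 d$ pair against the $L^4_t$-type norms of $u$ and $\nabla d$ exactly to give an $L^{4/3}_t$ bound) yields $\nabla P\in L^{4/3}(\Omega_T)$ with the claimed bound. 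The chain $\|P-P_\Omega\|_{L^{4,4/3}(\Omega_T)}\lesssim\|\nabla P\|_{L^{4/3}(\Omega_T)}$ is then the Sobolev–Poincaré inequality $W^{1,4/3}(\Omega)\hookrightarrow L^4(\Omega)$ in the plane (since $(4/3)^* = \tfrac{2\cdot 4/3}{2-4/3} = 4$), applied slicewise and then integrated in time.

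The main obstacle I expect is not the interior estimate — which is essentially as in Lin–Lin–Wang \cite{LLW10} — but the careful justification that the pressure really satisfies the \emph{homogeneous} Neumann problem despite the free boundary conditions on both $u$ and $d$. One must verify that the boundary integrals produced when integrating by parts against a test function $q$ with no boundary constraint all vanish: the term from $u\cdot\nabla u$ contributes $\int_{\partial\Omega}(u\cdot\n)(u\cdot\nabla q)$, which dies because $u\cdot\n=0$; the viscous term contributes something proportional to $\int_{\partial\Omega}(\DD u\cdot\n)\cdot\nabla q$, whose normal-normal component must be controlled and whose tangential component vanishes by $(\DD u\cdot\n)_\tau = 0$; and the stress term $\Div(\nabla d\odot\nabla d-\tfrac12|\nabla d|^2\Id_2)$ contributes a boundary integral whose tangential part vanishes by $\n(\nabla d\odot\nabla d)\tau = 0$ (a consequence of $\nabla_\n d \perp T_d\Sigma$, i.e.\ $\nabla_\n d_1 = \nabla_\n d_2 = 0$ when $\Sigma$ is the equator). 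Isolating the genuinely nonzero normal-normal boundary contributions and showing they are absorbed into the interior data (or that the correct formulation only sees the mean-zero part) is the delicate point; one way around it, which I would adopt, is to define $P$ directly as the solution of the weak Neumann problem above and then verify a posteriori from the weak formulation of \eqref{LCF} that $(u,d,P)$ solves the momentum equation, so that the boundary conditions never need to be differentiated. The regularity hypothesis $d\in L^2([0,T],H^2(\Omega))$ is exactly what makes $\nabla d\cdot\nabla^2 d$ lie in $L^{4/3}$, and $d\in L^\infty_t H^1_x$ combined with Ladyzhenskaya gives the $L^4$ control of $\nabla d$ needed to close the product estimate.
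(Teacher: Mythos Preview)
Your approach is workable but takes a genuinely different route from the paper. You extract an elliptic Poisson--Neumann problem for $P$ slice-by-slice in time and appeal to Calder\'on--Zygmund theory; the paper instead recovers $\nabla P$ from the parabolic Stokes system. Concretely, the paper decomposes $u=v^1+v^2$ where $v^1$ solves the homogeneous heat equation with the same initial data and Navier boundary, so that $v^2$ solves a non-stationary Stokes problem with zero initial data, forcing $F=-u\cdot\nabla u-\nabla\cdot(\nabla d\odot\nabla d-\tfrac12|\nabla d|^2\Id_2)$, and Navier boundary. The $W^{2,1}_{4/3}$ maximal regularity estimate for this Stokes problem then gives $\|\nabla P\|_{L^{4/3}(\Omega_T)}\lesssim\|F\|_{L^{4/3}(\Omega_T)}$ directly, and H\"older with exponents $\tfrac14+\tfrac12=\tfrac34$ finishes exactly as you write. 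The Sobolev--Poincar\'e step $\|P-P_\Omega\|_{L^{4,4/3}}\lesssim\|\nabla P\|_{L^{4/3}}$ is identical in both arguments.

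The practical difference is precisely the boundary issue you flag: in your route you must identify the Neumann data for $P$ on $\partial\Omega$, and while the appendix of the paper shows $\partial_\n P=0$ in the flat half-plane model, on a curved boundary the normal trace of the momentum equation produces curvature-dependent lower-order terms that do not all vanish. Your proposed workaround (define $P$ weakly and verify a posteriori) can be made rigorous but is more involved than you suggest. The paper's Stokes-based approach sidesteps this entirely, because the boundary condition imposed is on $u$ (Navier slip), not on $P$, and the pressure estimate is a built-in output of the maximal regularity theory (cf.\ \cite{BKP13}). So both arguments arrive at the same bound, but the paper's is shorter and avoids the boundary bookkeeping you correctly identified as the main obstacle.
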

\begin{proof}
    We write $u=v^1+v^2$ where $v^1:\Omega_T\to\RR^2$ solves homogeneous heat equation $\partial_t v^1 -\Delta v^1=0$ with initial condition $v^1=u_0$ on $\Omega\times\{t=0\}$ and partially free boundary condition ($v^1$ satisfies $\eqref{PFB}_1$ on $\partial\Omega$). Then $v^2=v-v^1$ solves non-homogeneous, non-stationary Stokes equation $\partial_t v^2 - \Delta v^2 + \nabla P = - u\cdot u - \nabla\cdot(\nabla d \odot \nabla d - \frac12|\nabla d|^2\Id_2)$ with initial condition $v^2=0$ on $\Omega\times\{t=0\}$ and partially free boundary condition $v^2$ satisfies $\eqref{PFB}_1$ on $\partial\Omega$). Then Sobolev inequality and $W^{2,1}_{\frac43}$-estimate give
    \begin{align*} 
     \|P-P_\Omega\|_{L^{4,\frac43}(\Omega_T)} & \lesssim \|\nabla P\|_{L^{\frac43}(\Omega_T)} \lesssim \||u||\nabla u|\|_{L^{\frac43}(\Omega_T)} + \||\nabla d||\nabla^2d|\|_{L^{\frac43}(\Omega_T)}  \\
    &\lesssim \|u\|_{L^4(\Omega_T)}\|\nabla u\|_{L^2(\Omega_T)} + \|\nabla d\|_{L^4(\Omega_T)}\|\nabla^2d\|_{L^2(\Omega_T)}. \end{align*}
\end{proof}

\begin{lemma}[Local smallness]\label{lem:local_smallness} For any $\alpha\in(0,1)$, there exists $\epsilon_0>0$ such that for $z_0=(x_0,t_0)\in\RR^3$ and $r>0$, if $(u,d)\in W_2^{1,0}(P_{2r}(z_0)), P\in W^{1,\frac43}(P_{2r}(z_0))$ is a weak solution to \eqref{LCF}  and 
\begin{equation}\label{smallness_local}
    \int_{P_r(z_0)} |u|^4+|\nabla d|^4 \leq\epsilon_0^4,
\end{equation}
then $(u,d)\in C^\alpha(P_{\frac r2}(z_0), \RR^2\times\SS^2)$. Moreover,
\begin{align}
    \label{local_holder_d} [d]_{C^\alpha(P_{\frac r2} (z_0))} & \leq C(\|u\|_{L^4(P_r(z_0))} + \|\nabla d\|_{L^4(P_r(z_0))} ), \\
    \label{local_holder_u} [u]_{C^{\alpha}(P_{\frac r2} (z_0))} & \leq C ( \|u\|_{L^4(P_r(z_0))} + \|\nabla d\|_{L^4(P_r(z_0))} + \|\nabla P\|_{\frac43(P_r(z_0))} ).
\end{align}
\end{lemma}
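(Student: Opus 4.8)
The plan is to run the first half of the hole‑filling scheme described at the start of this section, iterated over parabolic scales; this is the interior $\epsilon$‑regularity statement, and the argument parallels the interior part of Lin--Lin--Wang \cite{LLW10}. I would first normalize by the parabolic rescaling $(u,d,P)\mapsto\big(r\,u(z_0+r\,\cdot),\,d(z_0+r\,\cdot),\,r^2P(z_0+r\,\cdot)\big)$: the system \eqref{LCF} is invariant and each of $\int_{P_\rho}|u|^4$, $\int_{P_\rho}|\nabla d|^4$, $\int_{P_\rho}|\nabla P|^{4/3}$ is left unchanged, so we may take $z_0=0$, $r=1$, in which case \eqref{smallness_local} reads $\int_{P_1}(|u|^4+|\nabla d|^4)\le\epsilon_0^4$ and \eqref{local_holder_d}--\eqref{local_holder_u} are the $r=1$ statements. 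Fix once and for all $z\in P_{1/2}$ and $0<\rho\le\tfrac14$, so that $P_\rho(z)\subset P_1$.

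The heart of the matter is a one‑step decay estimate at scale $\rho$. For the director field, split $d=d_h+d_e$ on $P_\rho(z)$ with $d_h$ solving the heat equation and $d_h=d$ on $\partial_p P_\rho(z)$, and $d_e=d-d_h$ solving $\partial_td_e-\Delta d_e=-u\cdot\nabla d+|\nabla d|^2d$ with vanishing parabolic‑boundary data. For $d_h$, the interior form of Lemma \ref{heat: gradient} gives $\int_{P_{\theta\rho}(z)}|\nabla d_h|^4\lesssim\theta^4\int_{P_\rho(z)}|\nabla d_h|^4\lesssim\theta^4\big(\int_{P_\rho(z)}|\nabla d|^4+\int_{P_\rho(z)}|\nabla d_e|^4\big)$. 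For $d_e$, using $|d|\equiv1$ the forcing is pointwise $\lesssim|u|\,|\nabla d|+|\nabla d|^2$, hence lies in $L^2(P_\rho(z))$ with norm squared $\lesssim\int_{P_\rho(z)}(|u|^4+|\nabla d|^4)$; testing the $d_e$‑equation against $-\Delta d_e$ (a local energy identity, with boundary terms vanishing since $d_e=0$ on the spatial boundary) controls $\sup_t\int_{B_\rho}|\nabla d_e|^2$ and $\int_{P_\rho(z)}(|\nabla^2d_e|^2+|\partial_td_e|^2)$ by that same quantity, and then Lemma \ref{Ladyzhenskaya} on $P_\rho(z)$ yields $\int_{P_\rho(z)}|\nabla d_e|^4\lesssim\big(\int_{P_\rho(z)}(|u|^4+|\nabla d|^4)\big)^2\le\epsilon_0^4\int_{P_\rho(z)}(|u|^4+|\nabla d|^4)$, the last step because $P_\rho(z)\subset P_1$. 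Combining, $\int_{P_{\theta\rho}(z)}|\nabla d|^4\le C\theta^4\int_{P_\rho(z)}|\nabla d|^4+C\epsilon_0^4\int_{P_\rho(z)}(|u|^4+|\nabla d|^4)$.

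For the velocity, split $u=u_h+u_e$ on $P_\rho(z)$ in the same way; since $\Div u=0$ on $\partial_pP_\rho(z)$, $u_h$ (hence $u_e$) is divergence free, and $u_e$ solves the heat equation with forcing $-\Div\big(u\otimes u+\nabla d\odot\nabla d-\tfrac12|\nabla d|^2\Id_2\big)-\nabla P$ and vanishing parabolic‑boundary data (we are in the interior of $\Omega$, so no boundary condition enters). The interior Campanato decay for the heat equation gives $\int_{P_{\theta\rho}(z)}|u_h-(u_h)_{z,\theta\rho}|^4\lesssim\theta^{8}\int_{P_\rho(z)}|u_h-(u_h)_{z,\rho}|^4$, bounded in turn by $\theta^{8}\big(\int_{P_\rho(z)}|u-(u)_{z,\rho}|^4+\int_{P_\rho(z)}|u_e|^4\big)$, while the energy identity for $u_e$ (test against $u_e$) together with Lemma \ref{Ladyzhenskaya} gives $\int_{P_\rho(z)}|u_e|^4\lesssim\big(\int_{P_\rho(z)}(|u|^4+|\nabla d|^4)\big)^2+\|\nabla P\|_{L^{4/3}(P_\rho(z))}^4\le\epsilon_0^4\int_{P_\rho(z)}(|u|^4+|\nabla d|^4)+\|\nabla P\|_{L^{4/3}(P_\rho(z))}^4$. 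The pressure gradient is controlled on smaller cylinders either by treating $\nabla P$ as data, or via the elliptic identity $\Delta P=-\partial_i\partial_j\big(u_iu_j+\partial_id\cdot\partial_jd-\tfrac12|\nabla d|^2\delta_{ij}\big)$ (divergence of the momentum equation, using $\Div u=0$) together with Lemma \ref{lem: est pressure} or the Riesz‑potential/Morrey estimate of Lemma \ref{lem: Riesz-Morrey}, so that $\int_{P_\rho(z)}|\nabla P|^{4/3}$ enjoys its own power decay and enters the scheme as a lower‑order term. Assembling into a single scale‑invariant quantity $\Phi(\rho;z)$ built from $\int_{P_\rho(z)}|\nabla d|^4$, $\int_{P_\rho(z)}|u-(u)_{z,\rho}|^4$ and $\int_{P_\rho(z)}|\nabla P|^{4/3}$, the three pieces combine to $\Phi(\theta\rho;z)\le C(\theta^4+\epsilon_0^4)\Phi(\rho;z)$ for all admissible $z,\rho$.

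To conclude, fix $\theta$ small so $C\theta^4\le\tfrac14\theta^{4\alpha}$ (possible since $\alpha<1$) and then $\epsilon_0$ small so $C\epsilon_0^4\le\tfrac14\theta^{4\alpha}$; this gives $\Phi(\theta\rho;z)\le\tfrac12\theta^{4\alpha}\Phi(\rho;z)$, and iterating over $\rho=\theta^k/4$ with interpolation across intermediate scales yields $\Phi(\rho;z)\lesssim\rho^{4\alpha}\Phi(\tfrac14;z)\lesssim\rho^{4\alpha}\big(\int_{P_1}(|u|^4+|\nabla d|^4)+(\int_{P_1}|\nabla P|^{4/3})^3\big)$. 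This is a Morrey bound for $\nabla d$ and a Campanato bound for $u$; reinserting the former into the $d$‑energy identity also bounds $\rho^2\int_{P_\rho(z)}|\partial_td|^2$, so Lemma \ref{parabolic morrey} (and the Campanato characterization of $C^\alpha$ for $u$) gives $(u,d)\in C^\alpha(P_{1/2}(z_0))$ with precisely \eqref{local_holder_d}--\eqref{local_holder_u}; the $\|\nabla P\|_{L^{4/3}}$‑term is absent from \eqref{local_holder_d} because the $d$‑equation never involves $P$. The step I expect to be the main obstacle is the pressure: being nonlocal it cannot be ``holed'' directly, so one must carry $\nabla P$ through the $u_e$‑estimate or route it through its elliptic equation, check that the resulting coupled decay genuinely closes, and make sure no pressure contribution leaks into the $d$‑estimate. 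A secondary but essentially routine point is the rigorous justification of the local energy identities and the integrations by parts for a merely weak solution, which in the interior is handled by smooth spatial cutoffs and the usual difference‑quotient/mollification argument --- and is precisely why this lemma, unlike its boundary analogue, uses none of the Navier‑slip structure.
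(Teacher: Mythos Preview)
Your approach is the paper's: the decompositions $d=d_h+d_e$, $u=u_h+u_e$ into caloric plus error pieces, testing $d_e$ against $-\Delta d_e$ followed by Lemma~\ref{Ladyzhenskaya}, the Campanato/gradient decay from Lemma~\ref{heat: gradient} for the caloric parts, the separate $d_t$ bound, and the final iteration via Lemma~\ref{parabolic morrey} all match Steps~1--5 of the paper.

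The one place where your scheme as written does not close is precisely the pressure step you flag. In the paper the local decay for $\nabla P$ comes from splitting $P=P^1+P^2$ on $B_R(x_1)$ with $\Delta P^1=-\nabla\cdot\big(u\cdot\nabla u+\nabla\cdot(\nabla d\odot\nabla d-\tfrac12|\nabla d|^2\Id_2)\big)$, $P^1|_{\partial B_R}=0$; Calder\'on--Zygmund then gives
\[
\|\nabla P^1\|_{L^{4/3}(P_R)}\lesssim \|u\|_{L^4(P_R)}\|\nabla u\|_{L^2(P_R)}+\|\nabla d\|_{L^4(P_R)}\|\nabla^2 d\|_{L^2(P_R)},
\]
while Harnack for the harmonic $P^2$ supplies the $\theta^2$ decay. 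The source therefore contains $\|\nabla u\|_{L^2(P_R)}$, and the iterated quantity must carry it: the paper uses $A_r=\|u-u_{z,r}\|_{L^4}^4+\|\nabla u\|_{L^2}^4+\|\nabla P\|_{L^{4/3}}^4$, with the $\nabla d$ decay from Step~1 fed in as an already-established Morrey bound. Your $\Phi$ omits $\|\nabla u\|_{L^2}$, so the pressure--velocity coupling is not absorbed and the clean inequality $\Phi(\theta\rho)\le C(\theta^4+\epsilon_0^4)\Phi(\rho)$ does not follow as stated. Neither Lemma~\ref{lem: est pressure} (a global estimate) nor Lemma~\ref{lem: Riesz-Morrey} (used only after $C^\alpha$ is established, for higher regularity) replaces this local split. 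Once you add $\|\nabla u\|_{L^2}$ to the scheme and do the harmonic/Poisson decomposition of $P$, everything closes exactly as you outline.
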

\begin{proof}
The proof is similar to that of \cite[Lemma 2.2]{LLW10}. Here we give a sketch for self-containedness. \\
    
    \emph{Step 1:} First we want to have a local growth control of $|\nabla d|$ in the following sense:
    \begin{equation}\label{est: interior growth d} \int_{P_r(z_1)} |\nabla d|^4 \leq \Big(\frac{r}R\Big)^{4\alpha} \int_{P_R(z_1)} |\nabla d|^{4 } \text{ for } 0<r\leq R.\end{equation}
    To do so, we decompose $d=d^1+d^2$ where $d_1: P_R(z_1)\to\RR^3$ satisfies the homogeneous heat equation $\partial_t d^1-\Delta d^1=0$ with Dirichlet boundary condition $d^1=d$ on $\partial P_R(z_1)$, and $d^2: P_R(z_1)\to\RR^3$ satisfies the inhomogeneous heat equation $\partial_t d^2 - \Delta d^2 = -u\cdot d + |\nabla d|^2 d$ with zero boundary condition $d^2=0$ on $\partial P_R(z_1)$.  We test $d^2$ equation with $\Delta d^2$ and apply  H\"older inequality to obtain
    \[ \sup_{t_1-R^2\leq t\leq t_1} \int_{B_R(x_1)} |\nabla d^2|^2 + \int_{P_R(z_1)} |\Delta d^2|^2 \lesssim (\|u\|^2_{L^4(P_R(z_1))} + \|\nabla d\|^2_{L^4(P_R(z_1))} )\|\nabla d\|^2_{L^4(P_R(z_1))}. \]
    This, together with Ladyzhenskaya inequality in Lemma \ref{Ladyzhenskaya}, yields
    \[ \int_{P_R(z_1)} |\nabla d^2|^4 \lesssim \Big( \int_{P_R(z_1)} |u|^4 + \int_{P_R(z_1)} |\nabla d|^4 \Big) \int_{P_R(z_1)} |\nabla d|^4 .\] We use Lemma \ref{heat: gradient} for $d^1$   and obtain $\|\nabla d^1\|_{L^4(P_{\theta R})} \lesssim \theta \| \nabla d^1\|_{L^4(P_R)}$, and thus
    \[ \int_{P_{\theta R}(z_1)} |\nabla d^1|^4 \leq C \theta^4 \int_{P_R(z_1)} |\nabla d|^4 + C \Big( \int_{P_R(z_1)} |u|^4 + \int_{P_R(z_1)} |\nabla d|^4 \Big) \int_{P_R(z_1)} |\nabla d|^4, \] which gives inequality \eqref{est: interior growth d} if we choose $\theta=r/R<\theta_0$ and $\epsilon_0<\theta_0$ such that $2C\theta_0^4\leq \theta_0^{4\alpha}$.
   
     \emph{Step 2:} Next we want to control the local growth of $|d_t|$. We test $\eqref{LCF}_3$ with  $d_t \phi^2$ with cut-off function $\phi\in C_0^\infty(B_r(x_0)), 0\leq \phi\leq 1, \phi\equiv 1$ on $B_{\frac{r}2}(x_0)$, and $|\nabla \phi|\leq 2/r$. It follows that
    \[ \int_{B_{\frac{r}2}(x_0)} |d_t|^2\phi^2 + \frac{d}{dt}\int_{B_r(x_0)} |\nabla \phi|^2\phi^2 \lesssim r^{-2}\int_{B_r(x_0)}|\nabla d|^2 + \int_{B_r(x_0)} |u|^2|\nabla d|^2.  \]
    We select $s_0\in(t_0-r^2-t_0-\frac{r^2}{4})$ such that $\int_{B_r(x_0)}|\nabla(\cdot,s_0)|^2\lesssim r^{-2}\int_{P_r(z_0)}|\nabla d|^2$.
     As a consequence, 
     \begin{equation}\label{est: interior growth dt} \int_{P_{\frac{r}2}(z_0)} |d_t|^2 
        \lesssim (1+\|u\|^2_{L^4(P_r(z_0))})\|\nabla d\|^2_{L^4(P_r(z_0))}
     \lesssim \Big( \frac{r}{R} \Big)^{2\alpha}\Big( \int_{P_R(z_0)} |\nabla d|^4 \Big)^{\frac12}. \end{equation}
     Estimations \eqref{est: interior growth d} and \eqref{est: interior growth dt} imply the H\"older continuity by Lemma \ref{parabolic morrey}. 

    \emph{Step 3:} We will estimate the local growth of $|\nabla u|$ by separating  solution $u=u^1+u^2$ of $\eqref{LCF}_1$ into two functions, where $u^1:P_R(z_1)\to\RR^2$ satisfies homogeneous heat equation $\partial_t u^1 -\Delta u^1=0$  with Dirichlet boundary condition $u^1=u$ on $\partial P_R(z_1)$ and $u^2=u-u^1:P_R(z_1)\to\RR^2$ satisfies inhomogeneous heat equation $\partial_t u^2 - \Delta u^2 + \nabla P = -\nabla\cdot[u\otimes (u-u_{z_1,R}) + \nabla d\odot \nabla d -\frac12 |\nabla d|^2 \Id_2 ]$ with zero boundary condition $u^2=0$ on $\partial P_R(z_1)$. We test it with $u^2$ and use  H\"older inequality to estimate
    \[ \frac12\frac{d}{dt}\int_{B_R(x_1)} |u^2|^2 + \int_{B_R(x_1)} |\nabla u^2|^2 \lesssim \int_{B_R(x_1)} |\nabla d|^4 + |u|^2|u-u_{z_1,R}|^2 + |u^2||\nabla P| .\]
    We integrate over $[t_1-R^2,t_1]$ and use H\"older inequality to obtain
    \[ \sup_{t_1-R^2\leq t\leq t_1} \int_{B_R(x_1)} |u^2|^2 + \int_{P_R} |\nabla u^2|^2 \lesssim \int_{P_R}|\nabla d|^4  + \|u\|^2_{L^4(P_R)}\|u-u_{z_1,R}\|^2_{L^4} + \|u^2\|_{L^4(P_R)}\|\nabla P\|_{L^{\frac43}(P_R)}, \]
    which, together with Ladyzhenskaya inequality, yields
        \[  \int_{P_R(z_1)}| u^2|^4  \lesssim \Big( \int_{P_R(z_1)} |\nabla d|^4 \Big)^2 + \int_{P_R(z_1)}|u-u_{z_1,R}|^4\cdot\int_{P_R(z_1)}|u|^4 + \Big( \int_{P_R(z_1)} |\nabla P|^{\frac43} \Big)^3. \]
    Collecting the above two estimates gives
    \[    \Big( \int_{P_R(z_1)}|\nabla u^2|^2 \Big)^2 \lesssim \Big( \int_{P_R(z_1)} |\nabla d|^4 \Big)^2 + \int_{P_R(z_1)}|u-u_{z_1,R}|^4\cdot\int_{P_R(z_1)}|u|^4 + \Big( \int_{P_R(z_1)} |\nabla P|^{\frac43} \Big)^3. \] 

    Applying Lemma \ref{heat: gradient} to $u^1$ we have $\|u^1-u^1_{z_1,\theta R}\|_{L^4(P_{\theta R}(z_1))} \lesssim \theta^2 \|u^1-u^1_{z_1,R}\|_{L^4(P_R(z_1))}\lesssim \theta^2 \|u-u_{z_1,R}\|_{L^4(P_R(z_1))} + \|u^2\|_{L^4(P_R(z_1))}$ and $\|\nabla u^1\|_{L^2(P_{\theta R}(z_1))}\lesssim \theta^2 \|\nabla u\|_{L^2(P_R(z_1))} + \|\nabla u^2\|_{L^2(P_R(z_1))}$. Thus we obtain the following estimates for local growth of $|u-u_{z_1,R}|$ and $|\nabla u|$.
    \begin{equation}\label{est: interior u}\begin{split}
        \int_{P_{\theta R}(z_1)} |u-u_{z_1,\theta R}|^4 & \lesssim ( \theta^8 + \|u\|^4_{L^4(P_R(z_1))} )\int_{P_R(z_1)} |u-u_{z_1,R}|^4 + \|\nabla d\|_{L^4(P_R)}^8 + \| \nabla P \|^4_{L^{\frac43}(P_R)}, \\
        \Big( \int_{P_{\theta R}(z_1)} |\nabla u|^2 \Big)^2 & \lesssim \theta^8 \Big( \int_{P_R(z_1)} |\nabla u|^2 \Big)^2 + \|u\|^4_{L^4(P_R)} \int_{P_R(z_1)} |u-u_{z_1,R}|^4 + \|\nabla d\|^8_{L^4(P_R)} + \|\nabla P\|^4_{L^{\frac43}(P_R)}.
    \end{split}\end{equation}

    \emph{Step 4:} We need to estimate $\|\nabla P\|_{L^{\frac43}}$. Again, we separate $P=P^1+P^2$ where $P^1: B_R(x_1)\to\RR$ satisfies Poisson's equation $\Delta P^1 = - \nabla\cdot(u\cdot \nabla u + \nabla\cdot (\nabla d\odot \nabla d-\frac12|\nabla d|^2\Id_2))$ with zero boundary condition $P^1=0$ on $\partial B_R(x_1)$, and harmonic function $P^2$ satisfies Laplace equation with Dirichlet boundary condition $P^2=P$ on $\partial B_R(x_1)$.  The Calderon-Zygmund theory, together with  interior  $W^{2,1}_2$-estimate   $\|\nabla^2 d\|_{L^2(P_{\frac34})}\lesssim \|u\|_{L^4(P_1)} + \|\nabla d\|_{L^4(P_1)}$ for $\eqref{LCF}_3$, gives 
    \[\|\nabla P^1\|_{L^{\frac43} (P_{\frac{R}2}(z_1))} \lesssim \|u\|_{L^4(P_R(z_1))} \|\nabla u\|_{L^2(P_R(z_1))} + (\|u\|_{L^4(P_R(z_1))} + \|\nabla d\|_{L^4(P_R(z_1))})\|\nabla d\|_{L^4(P_R(z_1))}. \]
    We apply Harnack inequality for $\nabla P^2$ and use the above estimate to get
    \begin{equation}\label{est: interior P} \int_{P_{\theta R}(z_1)} |\nabla P|^{\frac43} \lesssim \theta^2 \int_{P_{{R}}} |\nabla P|^{\frac43} + \|u\|_{L^4(P_R)}^{\frac43}  \|\nabla u\|^{\frac43}_{L^2(P_R)} + \Big(\|u\|_{L^4(P_R)}^{\frac43} + \|\nabla d\|^{\frac43}_{L^4(P_R)} \Big)\|\nabla d\|^{\frac43}_{L^4(P_R)}. \end{equation}

    \emph{Step 5:} Adding the above estimates \eqref{est: interior growth d},\eqref{est: interior growth dt}, \eqref{est: interior u}, and \eqref{est: interior P} for $\nabla d, d_t, \nabla u, \nabla P$ together, we have
    \[ A_{\theta R} \lesssim (\theta^6+\|u\|^4_{L^4(P_R)}) A_R + (\|u\|^4_{L^4(P_R)} + \|\nabla d\|^4_{L^4(P_R)})\|\nabla d\|^4_{L^4(P_R)} \lesssim \theta^{4+4\alpha_1} A_R +  R^{4+4\alpha_1}\]
    where $A_r=\|u-u_{z_r}\|^4_{L^4(P_r)} + \|\nabla u\|^4_{L^2(P_r)} + \|\nabla P\|^4_{\frac{4}{3}(P_R)}$, and we take $\alpha_1=\frac{1+\alpha}4,\epsilon_0\leq \theta^{\frac32}\leq R\leq \theta$. The characterization of Campanato spaces states that $u\in C^{\alpha_1}(P_{\frac12})$.
    \end{proof}

\begin{lemma}[Boundary smallness]\label{lem:boundary_smallness} For any $\alpha\in(0,1)$, there exists $\epsilon_0$ such that if for $z_0=(x_0,t_0)\in\partial\Omega_+\times\RR$ and $r>0, (u,d)\in W_2^{1,0}(P_{2r}^+(z_0)), P\in W^{1,\frac43}(P_{2r}^+(z_0))$ is a weak solution of \eqref{LCF}, with $(u,d)$ satisfies \eqref{PFB} on $\Gamma_{2r}^+(x_0)\times[t_0-r^2,t_0]$
and
\begin{equation}\label{smallness_boundary}
    \int_{P_r^+(z_0)} |u|^4 + |\nabla d|^4 \leq \epsilon_0^4,
\end{equation}
then $(u,d)\in C^\alpha(P_{\frac r2}^+(z_0))$. Moreover,
\begin{align}
    \label{boundary_holder_d} [d]_{C^\alpha(P_{\frac r2}^+ (z_0))} & \lesssim \|u\|_{L^4(P_r^+(z_0))} + \|\nabla d\|_{L^4(P_r^+(z_0))}, \\
    \label{boundary_holder_u} [u]_{C^\alpha(P_{\frac r2}^+(z_0))} & \lesssim \|u\|_{L^4(P_r^+(z_0))} + \|\nabla d\|_{L^4(P_r^+(z_0))} + \|\nabla P\|_{L^{\frac 43}(P_r^+(z_0))}.
\end{align}
\end{lemma}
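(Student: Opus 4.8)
\textbf{Proof plan for Lemma \ref{lem:boundary_smallness}.} The plan is to follow the same hole-filling scheme as in Lemma \ref{lem:local_smallness}, but replacing the interior heat/Stokes/Poisson decompositions by their boundary analogues, so that at every stage the ``good'' comparison function carries the partially free boundary condition \eqref{PFB} on the flat portion $\Gamma_{2r}^+(x_0)$, while the ``bad'' remainder satisfies a zero (Dirichlet) condition there. As in Lemma \ref{lem:local_smallness} we may assume, after a standard boundary flattening of $\partial\Omega$ (absorbing the lower-order terms from the change of variables into the inhomogeneity, which is harmless since we only seek $C^\alpha$), that $x_0\in\{x_2=0\}$ and we work in half-cylinders $P_r^+(z_0)$.

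\emph{Step 1 (growth of $\nabla d$).} Decompose $d=d^1+d^2$ on $P_R^+(z_1)$, where $d^1$ solves the homogeneous heat equation with $d^1=d$ on $\partial_p P_R^+(z_1)\cap\{x_2>0\}$ and the \emph{Neumann condition} $\nabla_{\n}d^1_1=\nabla_{\n}d^1_2=d^1_3=0$ (i.e. $\partial_{x_2}d^1_1=\partial_{x_2}d^1_2=d^1_3=0$) on the flat part, and $d^2=d-d^1$ solves the inhomogeneous heat equation with $d^2$ vanishing on $\partial_p P_R^+$ and Neumann data on the flat boundary. Testing the $d^2$-equation with $\Delta d^2$ is legitimate because the boundary terms vanish (Dirichlet on the curved part, Neumann on the flat part), giving the same energy bound as in the interior; combined with the boundary version of Lemma \ref{Ladyzhenskaya} (valid on $\Omega_T$, hence applicable after patching) one gets $\int_{P_R^+}|\nabla d^2|^4\lesssim(\int_{P_R^+}|u|^4+|\nabla d|^4)\int_{P_R^+}|\nabla d|^4$. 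For $d^1$ we invoke the \emph{boundary} gradient estimate in Lemma \ref{heat: gradient} (the Neumann case), $\int_{P_{\theta R}^+}|\nabla d^1|^4\lesssim\theta^4\int_{P_R^+}|\nabla d^1|^4$. Choosing $\theta=r/R$ small and $\epsilon_0$ accordingly yields the decay $\int_{P_r^+(z_1)}|\nabla d|^4\le(r/R)^{4\alpha}\int_{P_R^+(z_1)}|\nabla d|^4$, exactly as in Step 1 of Lemma \ref{lem:local_smallness}.

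\emph{Steps 2--4 (growth of $d_t$, of $\nabla u$ and $u-u_{z,r}$, and of $\nabla P$).} For $d_t$ we test $\eqref{LCF}_3$ with $d_t\phi^2$ using a cutoff $\phi$ supported in $B_r^+(x_0)$ that respects the Neumann condition ($\partial_{x_2}\phi=0$ on the flat part) and vanishes on $\partial B_r\cap\Omega$; the boundary integrals produced by integration by parts vanish thanks to \eqref{PFB}$_2$, so the same mean-value-in-time selection and Caccioppoli bound as in Step 2 give $\int_{P_{r/2}^+}|d_t|^2\lesssim(r/R)^{2\alpha}(\int_{P_R^+}|\nabla d|^4)^{1/2}$, and Lemma \ref{parabolic morrey} then furnishes \eqref{boundary_holder_d}. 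For $\nabla u$ we split $u=u^1+u^2$, where $u^1$ solves the homogeneous Stokes equation (or just the heat equation, after using $\Delta u=2\nabla\cdot\DD u$) with Navier data \eqref{PFB}$_1$ on the flat part and $u^1=u$ elsewhere, and $u^2$ solves the inhomogeneous Stokes system with right-hand side $-\nabla\cdot[u\otimes(u-u_{z_1,R})+\nabla d\odot\nabla d-\tfrac12|\nabla d|^2\Id_2]$, zero divergence, and vanishing boundary trace on the flat part. Here is where the boundary $W^{2,1}_{p,q}$-estimate of Lemma \ref{lem: W21pq} enters: applied with $p=q=\tfrac43$ to the $u^1$-part (to get the decay $\|u^1-u^1_{\theta R}\|_{L^4(P_{\theta R}^+)}\lesssim\theta^2\|u^1-u^1_R\|_{L^4(P_R^+)}$ and the matching gradient decay, which for the Navier-Stokes linearization follows from the flat-boundary version of the heat-kernel estimate in Lemma \ref{heat: gradient}) and, for $u^2$, together with the testing estimate $\sup_t\int|u^2|^2+\int|\nabla u^2|^2\lesssim\int|\nabla d|^4+\|u\|_{L^4}^2\|u-u_{z_1,R}\|_{L^4}^2+\|u^2\|_{L^4}\|\nabla P\|_{L^{4/3}}$ (valid because $u^2$ has good boundary data, so all boundary terms drop) and Lemma \ref{Ladyzhenskaya}. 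The pressure is handled as in Step 4 by splitting $P=P^1+P^2$ with $P^1$ solving the Neumann (not Dirichlet) Poisson problem $\Delta P^1=-\nabla\cdot(u\cdot\nabla u+\nabla\cdot(\nabla d\odot\nabla d-\tfrac12|\nabla d|^2\Id_2))$ with $\partial_{\n}P^1$ matching the normal trace forced by \eqref{PFB}$_1$ on the flat part and $P^1=0$ elsewhere, and $P^2$ harmonic; the flat-boundary Calder\'on--Zygmund estimate and interior $W^{2,1}_2$-regularity for $\eqref{LCF}_3$ give the same bound on $\|\nabla P^1\|_{L^{4/3}(P_{R/2}^+)}$, and boundary Harnack for $\nabla P^2$ gives $\int_{P_{\theta R}^+}|\nabla P|^{4/3}\lesssim\theta^2\int_{P_R^+}|\nabla P|^{4/3}+(\text{lower order})$.

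\emph{Step 5 (iteration).} Summing the four boundary growth inequalities for $\nabla d$, $d_t$, $\nabla u$, $u-u_{z,r}$, $\nabla P$ exactly as in Step 5 of Lemma \ref{lem:local_smallness} gives $A_{\theta R}^+\lesssim(\theta^6+\|u\|_{L^4(P_R^+)}^4)A_R^+ + (\|u\|_{L^4}^4+\|\nabla d\|_{L^4}^4)\|\nabla d\|_{L^4}^4\lesssim\theta^{4+4\alpha_1}A_R^++R^{4+4\alpha_1}$ with $A_r^+=\|u-u_{z,r}\|_{L^4(P_r^+)}^4+\|\nabla u\|_{L^2(P_r^+)}^4+\|\nabla P\|_{L^{4/3}(P_r^+)}^4$ and $\alpha_1=(1+\alpha)/4$; Campanato's characterization on half-cylinders then yields $(u,d)\in C^{\alpha_1}(P_{r/2}^+(z_0))$ and the quantitative bounds \eqref{boundary_holder_d}--\eqref{boundary_holder_u}. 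The main obstacle is not any single estimate but the bookkeeping at the boundary: one must verify that in every testing argument the boundary integrals genuinely vanish under the \emph{mixed} Navier/Neumann–Plateau conditions \eqref{PFB} (in particular that the cutoffs can be chosen compatibly, and that the remainder problems inherit a genuine zero trace on the flat part so that Poincar\'e applies), and that Lemma \ref{lem: W21pq} — which is the only available boundary regularity statement for the Stokes part and crucially has \emph{no} long-time decay — suffices because here we use it only on the unit-scale cylinder. Everything else is a routine adaptation of the interior argument.
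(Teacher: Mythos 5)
Your Steps 1 and 2 faithfully reproduce the paper's argument (boundary heat decomposition of $d$, testing $\eqref{LCF}_3$ with $d_t\phi^2$, applying Lemma \ref{heat: gradient} componentwise and Lemma \ref{parabolic morrey}), and the overall hole-filling strategy and the final iteration in Step 5 are on target. However, Step 3 contains a genuine gap, and Step 4 departs from the paper in a way that is neither needed nor clearly justified.

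In Step 3 the paper does \emph{not} mirror the interior decomposition. It takes a bent half-plane $H\supseteq P_R^+(z_1)$ with $\Gamma_R(z_1)\subseteq\partial H$, lets $u^1$ solve the \emph{inhomogeneous} Stokes system on all of $H\times[-1,0]$ with Navier boundary condition on $\partial H$, zero initial data, and right-hand side $-(u\cdot\nabla u+\nabla\cdot(\nabla d\odot\nabla d-\tfrac12|\nabla d|^2\Id_2))\chi_{P_R^+}$, controlled by the $W^{2,1}_{4/3}$ estimate of [BKP13]; it lets $u^2=u-u^1$ be the \emph{homogeneous} Stokes solution carrying the bad initial data, and it is to $u^2$ that Lemma \ref{lem: W21pq} is applied, crucially with spatial index $q>4$, i.e. $W^{2,1}_{q,4/3}$, so that after rescaling the factor $R^{3/2-2/q}$ and Sobolev embedding produce the decay $\theta^{3/2-2/q}$. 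Your version inverts the roles ($u^1$ homogeneous with $u^1=u$ on the curved part, $u^2$ inhomogeneous), applies Lemma \ref{lem: W21pq} with $p=q=4/3$ (which gives only $L^4$-level integrability in $2+1$ dimensions and hence no small-$\theta$ gain), and supplements this with ``the flat-boundary version of the heat-kernel estimate in Lemma \ref{heat: gradient}'' for the Stokes linearization with Navier boundary condition. That estimate does not exist in the paper: Lemma \ref{heat: gradient} is stated and used only for the scalar heat equation with Dirichlet or Neumann data, and the coupling with the pressure and the Navier condition is exactly what Lemma \ref{lem: W21pq} is there to circumvent. As written, the decay assertion for $u^1$ is unsupported, and without the $q>4$ choice the iteration in Step 5 does not close.

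Step 4 of your proposal (a Neumann Poisson decomposition of $P$ at the boundary) is also not how the paper proceeds: near the boundary the pressure is estimated together with $u^1,u^2$ by the Stokes $W^{2,1}_{p,q}$ inequalities of Step 3 (which return bounds on $\nabla P^1$ and $\nabla P^2$ directly), and the paper's Step 4 is merely the choice of $\theta_0$ and the iteration. Moreover, the Neumann trace of $P$ on $\Gamma_R$ is not a priori the natural datum of the elliptic problem you pose; verifying $\partial_\n P=0$ uses the full system, including the $d$-equation (cf. the flat-boundary computation in Appendix \ref{appendix: reflection}), and is not immediate after boundary flattening. So while your overall plan is in the right spirit, you would need to replace your Step 3 with the paper's bent-half-plane decomposition and the $W^{2,1}_{q,4/3}$ estimate ($q>4$), and drop the separate pressure decomposition, to obtain a complete argument.
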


\begin{proof}

     \emph{Step 1:} First we estimate boundary  growth for $\nabla d$. Consider $z_1=(x_1,t_1)\in \Gamma_{\frac12}\times[-\frac14,0]$ and $0<R\leq\frac14$. We separate $d=d^1+d^2$, where $d^1: P_R^+(z_1)\to\RR^3$ solves homogeneous heat equation $\partial_t d^1 -\Delta d^1 = 0  \text{ in } P_R^+(z_1)$ with mixed boundary condition $d^1 = d    \text{ on } S_R(x_1)\times[t_1-R^2,t_1], d^1 = d  \text{ on } B_R^+(x_1)\times\{t=t_1-R^2\},$ and $d^1 \text{ satisfies } \eqref{PFB}  \text{ on } \Gamma_R(x_1)\times[t_1-R^2,t_1]$. And $d^2=d-d^1$ solves inhomogeneous heat equation $   \partial_t d^2 -\Delta d^2 = -u\cdot\nabla d + |\nabla d|^2d  \text{ in } P_R^+(z_1) $ with mixed boundary condition $ d^2 = 0  \text{ on } B_R^+(x_1)\times\{t=t_1-R^2\},  d^2=0  \text{ on } S_R(x_1)\times[t_1-R^2,t_1]$, and $ d^2 \text{ satisfies } \eqref{PFB}  \text{ on } \Gamma_R(x_1)\times[t_1-R^2,t_1]$. Same as in Lemma \ref{lem:local_smallness}, we test equation  with  $\Delta d^2$, and in the end we obtain
     \[   \int_{P_R^+(z_1)} |\nabla d^2|^4 \lesssim \Big( \int_{P_R^+(z_1)} |u|^4 + |\nabla d|^4 \Big)\int_{P_R^+(z_1)} |\nabla d|^4. \]
 We apply Lemma \ref{heat: gradient} to components of $d^1$ and follow the same argument in Lemma \ref{lem:local_smallness} to obtain
    \begin{equation}\label{est: boundary d} \int_{P_{\theta R}^+(z_1)} |\nabla d|^4 \leq  C(\theta^4 + \epsilon_0^4) \int_{P_R^+(z_1)}|\nabla d|^4 \leq \theta_0^{4\alpha}\int_{P_R^+(z_1)}|\nabla d|^4 \end{equation}
    given that $\theta\leq \theta_0$ with $2C\theta_0^4\leq \theta_0^{4\alpha}$ and $\epsilon_0\leq\theta_0$. \\

    \emph{Step 2:} Then we estimate  boundary  growth for $|d_t|$. Because  components of $d_t$ satisfy either Neumann boundary condition or zero Dirichlet boundary condition, we test $\eqref{LCF}_3$ by $d_t\phi^2$ where the cut-off function $\phi \in C_0^\infty (B_R(x_1))$ is such that $0\leq\phi\leq 1$, $\phi\equiv1$ on $B_{\frac{r}2}(x_1)$, and $|\nabla \phi|\lesssim\frac1r$. The same computation as in Lemma \ref{lem:local_smallness} yields
    \begin{equation}\label{est: boundary dt} \int_{P_{\frac{r}2}^+(z_1) } |d_t|^2 \lesssim r^2 + \Big(\frac{r}R \Big)^{2\alpha} \|\nabla d\|_{L^4(P_R^+(z_1))}^2. \end{equation}
    Estimation \eqref{est: boundary d} and \eqref{est: boundary dt}, together with parabolic Morrey's lemma, imply that $d\in C^\alpha(P^+_{\frac12})$.

    \emph{Step 3:} Now we estimate the boundary local growth for $|\nabla u|$.  Let $H\subseteq\RR^2$ be a bent half-plane such that $P_R^+(z_1)\subseteq H$ and $\Gamma_R(z_1)\subseteq\partial H$. Let $u^1: H \times[-1,0]\to\RR^2$ solve  non-homogeneous, non-stationary Stokes equation $\partial_t u^1-\Delta u^1 + \nabla P^1 = -(u\cdot\nabla u + \nabla\cdot(\nabla d\odot \nabla d- \frac12|\nabla d|^2\Id_2))\chi_{P_R^+}$ with Navier perfect-slip boundary condition ($u^1$ satisfies $\eqref{PFB}_1$ on $\partial H$) and zero initial   condition. Then    $u^2=u-u^1: P_R^+(z_1)\to\RR^2$ solves homogeneous, non-stationary Stokes equation $\partial_t u^2-\Delta u^2  + \nabla P^2=0$ with partially  free boundary condition ($u^2$ satisfies $\eqref{PFB}_1$ on $\Gamma_R(z_1)$) and initial condition $u^2=u$ at $t=t_1-R^2$. 
    
    We apply parabolic Sobolev inequality Lemma \ref{Sobolev embedding} to $u^1$ and use $W^{2,1}_{\frac43}$-estimates for $u^1$ (cf. \cite[Theorem 2.3]{BKP13}), together with H\"older inequality to obtain
    \[ \|\nabla u^1\|_{L^2(P_R^+)} + \|u^1\|_{L^4(P_R^+)} + \|\nabla P^1\|_{L^{\frac43(P_R^+)}} \lesssim \|u\|_{L^4(P_R^+)}\|\nabla u\|_{L^2(P_R^+)} + \|\nabla^2d\|_{L^2(P_R^+)}\|\nabla d\|_{L^4(P_R^+)}. \]
We apply Lemma \ref{lem: W21pq} to $u^2$ and have, for any $q>4$,
    \begin{align*}
     &  R^{\frac{3}{2} -\frac{2}{q}}  \Big( \|u^2\|_{W^{2,1}_{q,\frac43} (P^+_{\frac{R}2}(z_1))} + \|\nabla P^2\|_{L^{\frac43(P_R^+(z_1))}} \Big)   \lesssim \|\nabla u^2\|_{L^2(P_R^+(z_1))} + \|\nabla P^2\|_{\frac43(P_R^+(z_1))} \\
     & \lesssim   \|\nabla u\|_{L^2(P_R^+(z_1))} + \|\nabla P\|_{L^{\frac43}(P_R^+(z_1))} + \|u\|_{L^4(P_R^+(z_1))} \|\nabla u\|_{L^2(P_R^+(z_1))} + \|\nabla^2 d\|_{L^2)P_R^+(z_1)} \|\nabla d\|_{L^4(P_R^+(z_1))}.
     \end{align*}
 Moreover, we use the Sobolev inequality and H\"older inequality to see that for any $\theta\in(0,\frac14)$,
     \begin{align*}
     & \|u^2-u^2_{z_1,\theta R}\|_{L^4(P^+_{\theta R}(z_1) ) } + \|\nabla P^2\|_{L^{\frac43}(P^+_{\theta R}(z_1))} \lesssim (\theta R)^{\frac32-\frac2q}\Big(\|u^2\|_{W^{2,1}_{q,\frac43} (P^+_{\frac{R}2}(z_1))} + \|\nabla P^2\|_{L^{\frac43(P_R^+(z_1))}}  \Big) \\
     & \lesssim \theta^{\frac32-\frac2q} \Big(  \|\nabla u\|_{L^2(P_R^+)} + \|\nabla P\|_{L^{\frac43}(P_R^+)} + \|u\|_{L^4(P_R^+)} \|\nabla u\|_{L^2(P_R^+)} + \|\nabla^2 d\|_{L^2(P_R^+)} \|\nabla d\|_{L^4(P_R^+)} \Big).
     \end{align*}

     To estimate $\|\nabla u^2\|_{L^2} $, let $\phi\in C_0^1(B_{\theta R}(x_1))$ be such that $0\leq \phi\leq 1, \phi\equiv 1$ on $B_{\frac{\theta R}{2}}(x_1)$, and $|\nabla \phi|\leq \frac2{\theta R}$. Multiplying the equation of $u^2$ by $(u^2-u^2_{z_1,\theta R})\phi^2$ and integrating over $B_{\theta R}^+(x_1)$, we obtain
     \[ \frac{d}{dt}\int_{B^+_{\theta R}(x_1)} |u^2- u^2_{z_1,\theta  R}|^2\phi^2 + \int_{B^+_{\theta R}(x_1)}|\nabla u^2|\phi^2 \lesssim \frac1{(\theta R)^{2}} \int_{B_{\theta R}^+ } |u^2-u^2_{z_1,\theta R}|^2 + \int_{B^+_{\theta R}}|\nabla P^2||u^2-u^2_{z_1,\theta R}|. \]
    Integrating over $[s_0,t_1]$, where $s_0\in[t_1-(\theta R)^2,t_1-\frac12(\theta R)^2]$ is such that 
    \[ \int_{B^+_{\theta R}(x_1)\times\{s_0\} } |u^2-u^2_{z_1,\theta R}|^2 \lesssim (\theta R)^{-2} \int_{P^+_{\theta R}(z_1)} |u^2-u^2_{z_1,\theta R}|^2, \]
    we obtain
    \begin{align*}
       & \int_{P^+_{\frac{\theta R}{2}}(z_1)} |\nabla u^2|^2 \lesssim (\theta R)^{-2} \int_{P_{\theta R}^+ (x_1)} |u^2-u^2_{z_1,\theta R}|^2 + \int_{P^+_{\theta R}(x_1)}|\nabla P^2||u^2-u^2_{z_1,\theta R}| \\
      & \lesssim  \| u^2 - u^2_{z_1,\theta R} \|^2_{L^4(P^+_{\theta R} (z_1))}  +  \| \nabla P^2 \|_{L^{\frac43} (P^+_{\theta R} (z_1))} \|u^2-u^2_{z_1,\theta R}\|_{L^4(P^+_{\theta R}(z_1))} \\
      & \lesssim  \| u^2 - u^2_{z_1,\theta R} \|^2_{L^4(P^+_{\theta R} (z_1))}  +  \| \nabla P^2 \|^2_{L^{\frac43} (P^+_{\theta R} (z_1))} .
    \end{align*}

    Collecting that above estimates, and we set $q=8$, we arrive
    \begin{align*}  
    & \|u-u_{z_1,\theta R}\|_{L^4(P^+_{\theta R}(z_1))} + \|\nabla u\|_{L^2(P^+_{\theta R}(z_1))} + \|\nabla P\|_{L^{\frac43}(P^+_{\theta R}(z_1) )} \\ 
      \lesssim & \Big( \theta^{\frac54} + \|u\|_{L^4(P_R^+(z_1))} \Big) \Big( \|u-u_{z_1,R}\|_{L^4(P_R^+(z_1))} + \|\nabla u\|_{L^2({P^+_R(z_1)})} + \|\nabla P\|_{L^{\frac43}(P^+_R(z_1))} \Big) \\ 
    &  +  \|\nabla^2 d\|_{L^2(P^+_{\frac{R}{2}}(z_1))}\|\nabla d\|_{L^4(P_R^+(z_1))} ,
    \end{align*}
and we have $W^{2,1}_2$-estimate for $d$ that
    \[ \|\nabla^2 d\|_{L^2(P^+_{\frac{R}2}(z_1))} \lesssim \|u\|_{L^4(P_R^+(z_1))} + \|\nabla d\|_{L^4(P_R^+(z_1))}. \]

    \emph{Step 4:} By choosing $\theta=\theta_0$ sufficiently small and $\epsilon_0\leq \theta_0$, we obtain for $ 0<r\leq\frac14 $,
    \[ \Theta^+(z_1,\theta R) \leq \theta \Theta^+(z_1,R) + C(\theta+R^\alpha)R^\alpha, \]
    where 
    \[ \Theta^+(z_1,r) := \|u-u_{z_1,r}\|_{L^4(P_r^+(z_1))} + \|\nabla u \|_{L^2(P_r^+(z_1))} + \|\nabla P\|_{L^{\frac43}(P_r^+(z_1))}.\]
We repeat the same argument as in Lemma \ref{lem:local_smallness} combined with the characterization of Campanato space and conclude that $u\in C^\alpha(P^+_{\frac12})$.
\end{proof}

 \subsection*{Proof of Theorem \ref{Thm A}}
\begin{proof}
    Assumption $u\in L^\infty([0,T], L^2(\Omega))\cap L^2([0,T], H^1(\Omega))$ and Ladyzhenskaya inequality give $u\in L^4(\Omega_T)$. Equation $\eqref{LCF}_3$ and the fact that $|d|=1$ give $d\Delta d + |\nabla d|^2=0$, which, together with assumption $\nabla d\in L^2([0,T], H^1(\Omega))$, gives $|\nabla d|\in L^4(\Omega_T)$ and hence $u\cdot\nabla u, \nabla\cdot(\nabla d\odot\nabla d-\frac12|\nabla d|^2\Id_2)\in L^{\frac43}(\Omega_T)$. It follows from absolute continuity of $\int |u|^4 + |\nabla d|^4$ that if $z_0=(x_0,t_0)\in\overline{\Omega}\times(0,T]$, there exists $r_0>0$ such that for $r<r_0$,
    \[ \int_{P_r(x_0)\cap \Omega_T} |u|^4 + |\nabla d|^4 \leq\epsilon_0^4, \]
    where $\epsilon_0$ is given in Lemma \ref{lem:local_smallness} and Lemma \ref{lem:boundary_smallness}. Hence, we deduce that $(u,d)\in C^\alpha(P_{\frac{r_0}2}(z_0)\cap\Omega\times(0,T])$. Consequently, we have $(u,d)\in C^\alpha(\overline{\Omega}\times(0,T])$.

    For the higher order regularity, we use Lemma \ref{lem: Riesz-Morrey}: for interior point $(x_0,t_0)\in\Omega\times(0,T)$, pick $r_0>0$ sufficiently small as above and also $P_{r_0}(x_0)\subseteq \Omega\times(0,T)$. Previous estimates give $\nabla d \in M^{2,2-2\alpha}(P_{\frac{r_0}{2}}(x))$ for any $\alpha\in(0,1)$. Consider $\tilde{d}=d\phi: P_{r_0}(x_0) \to \RR^3$ where the cutoff $\phi\in C_0^\infty(P_{r_0}(x_0)), 0\leq \phi\leq 1, \phi\equiv 1$ on $P_{\frac{r_0}2}(x_0)$, and $|\phi_t|, |\nabla \phi|, |\nabla^2\phi|\lesssim 1/r_0^2$. Then $\tilde{d}$ solves $\partial_t \tilde{d} - \Delta \tilde{d} = F$ where 
    \[ F=(\phi|\nabla d|^2d - d(\phi_t-\Delta\phi) - 2\nabla\phi\cdot\nabla d - u\cdot\nabla d \phi)\chi_{P_{r_0}(x_0)}\]

    Since $\nabla d \in L^4(\Omega_T)$, $u\in C^\alpha(\overline{\Omega}\times(0,T])$, and \eqref{est: interior growth d}, we can check that $F\in L^1(\RR^{n+1})$ and $F\in \widetilde{M}^{1,2-2\alpha}(\RR^{n+1})$. Because we have $\tilde{d}=G* F$ and thus $\nabla \tilde{d} = \nabla G* F$ where $G$ is the fundamental solution of the heat equation on $ \RR^n$, and $|\nabla G(x,t)|\lesssim \frac{1}{\delta((0,0), (x,t))^{1+n}}$, direct computation together with Lemma \ref{lem: Riesz-Morrey} yields
    \[ |\nabla \tilde{d}(z)|\lesssim \widetilde{I}_1(|F|)(z) \in L^{\frac{2-2\alpha}{1-2\alpha}, *}(\RR^{n+1}). \]
    Notice that $\frac{2-2\alpha}{1-2\alpha}\to+\infty$ as $\alpha\nearrow \frac12$. Thus we use interpolation of (weak) $L^p$ spaces to get  $\nabla \tilde{d}\in L^q(\RR^{n+1})$ for any $q>1$.
    Hence, $W^{2,1}_q$-estimate for the heat equation gives $\tilde{d}\in W^{2,1}_q(P_{r_0}(x_0))$ and thus $d\in W^{2,1}_q(P_{\frac{r_0}{2}}(x_0))$. Sobolev inequality then gives $\nabla   d\in C^\alpha(P_{\frac{r_0}2}(x_0))$ for any $\alpha\in(0,1)$.
     
    Consequently, we apply Schauder estimate to system $\eqref{LCF}_3$ to obtain $d\in C^{2,1}_\alpha(\Omega\times(0,T))$. This gives the H\"older regularity of external forces in $\eqref{LCF}_1$ and thus the standard $C^{2,1}_\alpha$-regularity theory implies that $u\in C^{2,1}_\alpha(\Omega\times(0,T))$. Starting from $C^{2,1}_\alpha$-regularity for pair $(u,d)$, we use the standard boot-strap argument to conclude that $(u,d)\in C^\infty(\Omega\times(0,T))$. Boundary $C^{2,1}_\alpha$-regularity for $(u,d)$ can be obtained in a similar way, by taking instead $\tilde{d}=(d-d')\phi$ where $d':P_r(x_0)^+\to\RR^3$ solves heat equation and  $d'=d$ on $ \Gamma_r(x_0)\times(t_0-t^2,t_0)\cup S_r(x_0)\times(t_0-t^2,t_0) $.
\end{proof}


\bigskip


\section{Existence of short time smooth solutions}

\medskip

 We would like to show that short time smooth solutions to \eqref{LCF}-\eqref{IC} exists for smooth initial and boundary data. Later in the proof of Theorem \ref{Thm B} we shall use approximation of smooth data and then show the existence. The argument to show the short-time existence in this section  also applies to the case of three dimensions.  

\begin{theorem}\label{short_time_existence}
For any $\alpha>0$, if $u_0\in C^{2,\alpha}(\overline{\Omega},\RR^2)\cap\Hbf$ and $d_0\in C^{2,\alpha}(\overline{\Omega},\SS^2)\cap\mathbf{J}$, then there exists $T>0$ depending on $\|u_0\|_{C^{2,\alpha}(\Omega)},\|d_0\|_{C^{2,\alpha}(\Omega)}$ such that there is a unique smooth solution $(u,d)\in C^{2,1}_\alpha(\overline{\Omega}\times[0,T),\RR^2\times\SS^2 )$ to the initial-boundary value problem \eqref{LCF}-\eqref{IC}.
    
\end{theorem}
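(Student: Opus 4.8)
The plan is to construct the solution on a short interval $[0,T]$ by a Picard iteration for the \emph{decoupled} linearised system, and to recover the constraint $d\in\SS^2$ a posteriori. Starting from an iterate $(\bar u,\bar d)$ on $\overline{\Omega}\times[0,T]$ with $\bar u(\cdot,0)=u_0$, $\bar d(\cdot,0)=d_0$, obeying the boundary conditions $\eqref{PFB}_1$ for $\bar u$ and $\eqref{PFB}_2$ for $\bar d$, and a size bound, the next iterate $(u,d,P)$ is defined by solving
\[
 d_t-\Delta d=|\nabla\bar d|^2\bar d-\bar u\cdot\nabla\bar d \text{ in }\Omega\times(0,T),\qquad \nabla_{\n}d_1=\nabla_{\n}d_2=d_3=0 \text{ on }\partial\Omega,\qquad d(\cdot,0)=d_0,
\]
and, with $(\DD u\cdot\n)_\tau=u\cdot\n=0$ on $\partial\Omega$ and $u(\cdot,0)=u_0$,
\[
 u_t-\Delta u+\nabla P=-\,\bar u\cdot\nabla\bar u-\nabla\cdot\Big(\nabla\bar d\odot\nabla\bar d-\frac12|\nabla\bar d|^2\Id_2\Big),\qquad \nabla\cdot u=0 \text{ in }\Omega\times(0,T).
\]
The first problem decouples into three scalar heat equations (Neumann for $d_1,d_2$, Dirichlet for $d_3$); the second is a non-stationary Stokes system with the Navier perfect-slip condition, $P$ being recovered by the Helmholtz projection $\PP$.

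I would run the iteration in $C([0,T];E)$ for a suitable Banach space $E$ of functions on $\Omega$ built on $W^{1,q}(\Omega)$ (for $q$ large) together with $L^\infty(\Omega)$. From the Duhamel representations $u(t)=e^{-t\bbA}\PP u_0-\int_0^t e^{-(t-s)\bbA}\PP[\cdots](s)\,ds$ and $d(t)=S(t)d_0+\int_0^t S(t-s)[\cdots](s)\,ds$ — with $e^{-t\bbA}$ the Stokes semigroup with Navier boundary condition and $S(t)$ the heat semigroup for the mixed boundary conditions above — the smoothing estimates of Lemma~\ref{lem: LpLq reg Heat} and Lemma~\ref{lem: LpLq reg Stokes} (in their gradient forms, and using that $\nabla\cdot(\nabla\bar d\odot\nabla\bar d-\frac12|\nabla\bar d|^2\Id_2)$ is a divergence, so one derivative may be absorbed by the semigroup and the $d$-nonlinearity stays at the level of $\nabla\bar d$) bound the Duhamel terms by $C\,T^{\theta}\,\Pi\big(\|(\bar u,\bar d)\|_{C([0,T];E)}\big)$ for some $\theta>0$ and a superlinear $\Pi$; the difference of two iterates is estimated the same way with $\Pi$ replaced by a local Lipschitz modulus. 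Since the linear terms $e^{-t\bbA}\PP u_0$ and $S(t)d_0$ are controlled by $\|u_0\|_{C^{2,\alpha}}+\|d_0\|_{C^{2,\alpha}}$ uniformly for $t\le 1$, taking $T$ small makes the iteration a self-map of a fixed ball and a contraction, so it converges to a mild solution $(u,d,P)$. Parabolic regularity — the interior/boundary $W^{2,1}_{p,q}$-estimates (Lemma~\ref{lem: W21pq}) and the bootstrap carried out in the proof of Theorem~\ref{Thm A} — then upgrades $(u,d)$ to $C^{2,1}_\alpha(\overline{\Omega}\times(0,T))$; regularity up to $\{t=0\}$ follows from the $C^{2,\alpha}$-regularity of the data and the boundary compatibility, whose zeroth-order part is precisely $u_0\in\Hbf$, $d_0\in\Jbf$.

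It remains to check $|d|=1$. Set $\rho=|d|^2-1$. Pairing the $d$-equation with $2d$ and using $2d\cdot\Delta d=\Delta|d|^2-2|\nabla d|^2$, we obtain the \emph{linear} equation
\[
 \partial_t\rho+u\cdot\nabla\rho-\Delta\rho-2|\nabla d|^2\rho=0 \text{ in }\Omega\times(0,T),\qquad \rho(\cdot,0)=|d_0|^2-1=0 ,
\]
while on $\partial\Omega$, since $d_3=0$ and $\nabla_{\n}d_1=\nabla_{\n}d_2=0$, one has $\nabla_{\n}\rho=2\big(d_1\nabla_{\n}d_1+d_2\nabla_{\n}d_2+d_3\nabla_{\n}d_3\big)=0$. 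Thus $\rho$ solves a linear equation with bounded coefficients, zero initial data, and homogeneous Neumann condition, so $\rho\equiv0$ (by the maximum principle, or a one-line Gronwall estimate exploiting $\nabla\cdot u=0$ and $u\cdot\n=0$). Hence $d\in\SS^2$ and $(u,d,P)$ solves \eqref{LCF}--\eqref{IC}. Uniqueness within the iteration class is automatic; uniqueness in all of $C^{2,1}_\alpha$ follows from the energy identity for the difference of two solutions, in which the boundary terms vanish because $u\cdot\n=0$, $(\DD u\cdot\n)_\tau=0$ (writing $\Delta u=2\nabla\cdot\DD u$ and using Korn's inequality, Lemma~\ref{lem: Korn}), followed by Gronwall.

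The main obstacle is the linear theory for the Stokes operator with the Navier perfect-slip boundary condition: as the remark following Lemma~\ref{lem: LpLq reg Stokes} warns, in this two-dimensional setting there is no $\calH^\infty$-calculus, so the analytic-semigroup/fractional-power machinery is unavailable and the estimates carry no long-time decay. On a short interval this is harmless, but one must ensure that the smoothing bounds feeding the Duhamel estimates — in particular $\|\nabla e^{-t\bbA}\,\cdot\|$ and its action on full divergences — are exactly those supplied by (the proof of) Lemma~\ref{lem: LpLq reg Stokes} and the boundary $W^{2,1}_{p,q}$-estimate of Lemma~\ref{lem: W21pq}, which themselves rest on the maximal $L^p-L^q$ regularity for the Stokes system with Navier boundary condition and nonzero divergence. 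One must also track the boundary compatibility so that the solution is genuinely $C^{2,1}_\alpha$ up to $\{t=0\}$. Everything else — the $L^q$ and H\"older bounds for $\bar u\cdot\nabla\bar u$, $\nabla\cdot(\nabla\bar d\odot\nabla\bar d-\frac12|\nabla\bar d|^2\Id_2)$ and for their differences — is elementary and purely local in time, which is why the same scheme works verbatim in three dimensions.
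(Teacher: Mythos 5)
Your proposal and the paper's proof both hinge on a contraction-mapping argument for a linearised Stokes/heat system, but the function space and the linear estimates used to close the contraction are genuinely different. The paper works directly in the parabolic H\"older space $C^{2,1}_\alpha(\overline{\Omega}_T)$: it defines $L(v,f)=(u,d)$ by solving the heat equation for $d$ with source $|\nabla f|^2 f-v\cdot\nabla f$, then the Stokes system for $u$ with source $-v\cdot\nabla v-\nabla\cdot(\nabla d\odot\nabla d-\tfrac12|\nabla d|^2\Id_2)$ (note: the \emph{new} $d$, not the old $f$, sits in the velocity forcing), and closes the ball-invariance and contraction by Schauder estimates together with the interpolation inequality $\|\cdot\|_{C^\alpha}\lesssim \delta\|\cdot\|_{C^{2,1}_\alpha}+\delta^{-1}\|\cdot\|_{C^0}$ and the smallness of $\|\cdot\|_{C^0}$ coming from the vanishing of increments at $t=0$. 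You instead run the iteration fully decoupled in a mild-solution space $C([0,T];E)$ built on $W^{1,q}$, using the Duhamel representation and the $L^p$--$L^q$ semigroup bounds of Lemmas~\ref{lem: LpLq reg Heat} and~\ref{lem: LpLq reg Stokes}, and only afterwards bootstrap to $C^{2,1}_\alpha$ via Lemma~\ref{lem: W21pq} and the Theorem~\ref{Thm A} machinery. This buys you a lower-regularity contraction step (and makes the small-$T$ gain $T^\theta$ explicit rather than hidden in Schauder constants), but it pushes the hard work onto the regularity upgrade \emph{up to $t=0$}: the smoothing estimates you invoke are singular as $t\to 0^+$, so obtaining uniform $C^{2,1}_\alpha$ bounds on $\overline{\Omega}\times[0,T)$ requires not only $(u_0,d_0)\in C^{2,\alpha}$ and the zeroth-order compatibility $u_0\in\Hbf,\,d_0\in\Jbf$, but also the first-order parabolic compatibility conditions on $\partial\Omega\times\{0\}$ (that the initial time-derivatives computed from the equations again satisfy the boundary conditions); you gesture at this but do not carry it out. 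Conversely, the paper's route front-loads that issue into the Schauder theory for the Navier-slip Stokes system, for which it gives no reference --- so neither argument is fully airtight on the linear side, though yours leans on the explicitly cited \cite{FR16} estimates. Your treatment of the constraint $|d|\equiv1$ (the linear equation for $\rho=|d|^2-1$ with homogeneous Neumann data plus maximum principle/Gronwall) and of uniqueness are the same as in the paper.
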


\begin{proof}
   We will use the contraction mapping principle in the similar spirit as \cite{LLW10}. For $T>0$ and $K>0$ to be chosen later, denote 
    \[ X=\{ (v,f)\in C^{2,1}_\alpha (\overline{\Omega}_T, \RR^2\times\RR^3)| \nabla\cdot v=0, (v,f)|_{t=0}=(u_0,d_0), \|v-u_0\|_{C^{2,1}_\alpha(\Omega_T)} + \|f-d_0\|_{C^{2,1}_\alpha(\Omega_T)} \leq K \}. \]
    Equip $X$ with the norm
    \[ \|(v,f)\|_X:= \|v\|_{C^{2,1}_\alpha (\Omega_T)} + \|f\|_{C^{2,1}_\alpha (\Omega_T)}.  \]
    Then we can show that $(X,\|\cdot\|_X)$ is a Banach space. For any $(v,f)\in X$, let $(u,d)$ be the unique solution to the following mixed system (inhomogeneous Stokes system with initial condition and Navier-slip boundary condition; inhomogeneous heat equation with initial condition and Neumann boundary condition \ref{PFB}):
    \begin{equation}\label{Stokes system L}\begin{split}
        \partial_t u -\Delta u + \nabla P & = -v\cdot\nabla v - \nabla \cdot (\nabla d\odot \nabla d-\frac12|\nabla d|^2\Id_2)   \ \text{ in } \Omega_T 
        \\
        \nabla \cdot u &=0 \ \text{ in } \Omega_T \\
        \partial_t d -\Delta d & = |\nabla f|^2f -v\cdot \nabla f  \ \text{ in } \Omega_T \\
        (u,d) & = (u_0,d_0)   \ \text{ on } \Omega\times\{t=0\} \\
        (u,d) & \text{ satisfies } \ref{PFB}  \ \text{ on } \partial\Omega\times(0,T).
    \end{split}\end{equation}
    
    We define the operator
   $  L : X\to C^{2,1}_\alpha(\overline{\Omega}_T,\RR^2\times\RR^3) $ to be $L(v,f)=(u,d)$. In the following two lemmas, we are going to show that for $T>0$ sufficiently small and $K>0$ sufficiently large, $L:X\to X$ and $L$ is a contraction map. Then there exists a unique solution $(u,d)\in C^{2,1}_\alpha(\overline{\Omega}\times[0,T),\RR^2\times\RR^3 )$ to \eqref{LCF}-\eqref{assumption1}. It remains to show that $|d|=1$. In fact, this can be done by using the maximum principle to the equation for $|d|^2$.
\end{proof}

\begin{lemma} There exist $T>0$ and $K>0$ such that $L:X\to X$.
\end{lemma}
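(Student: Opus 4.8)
The plan is to show that the solution operator $L$ maps the ball $X$ into itself for suitable $T,K$. Given $(v,f)\in X$, the pair $(u,d)=L(v,f)$ solves the decoupled linear problem \eqref{Stokes system L}: an inhomogeneous Stokes system with Navier-slip boundary condition for $u$, and an inhomogeneous heat equation with Neumann/Dirichlet boundary condition \eqref{PFB} for $d$. First I would invoke the linear parabolic Schauder theory (in the $C^{2,1}_\alpha$ scale), which for both subproblems gives
\[
\|u-u_0\|_{C^{2,1}_\alpha(\Omega_T)} + \|d-d_0\|_{C^{2,1}_\alpha(\Omega_T)} \lesssim \|F_1\|_{C^{0}_\alpha(\Omega_T)} + \|F_2\|_{C^{0}_\alpha(\Omega_T)} + (\text{compatibility terms from }u_0,d_0),
\]
where $F_1 = -v\cdot\nabla v - \nabla\cdot(\nabla d\odot\nabla d - \frac12|\nabla d|^2\Id_2)$ and $F_2 = |\nabla f|^2 f - v\cdot\nabla f$. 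The use of Schauder estimates for the Stokes system with Navier boundary condition is legitimate here because \eqref{PFB} is a regular (complementing) boundary condition; one may also cite the $W^{2,1}_{p,q}$-machinery of Lemma \ref{lem: W21pq} combined with Sobolev embedding to upgrade to Hölder norms.

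The key quantitative point is that the right-hand sides are controlled by the data plus a quantity that is small in $T$. Since $(v,f)\in X$ we have $\|v\|_{C^{2,1}_\alpha(\Omega_T)}, \|f\|_{C^{2,1}_\alpha(\Omega_T)} \le \|u_0\|_{C^{2,\alpha}} + \|d_0\|_{C^{2,\alpha}} + K =: M$, so $\|F_1\|_{C^0_\alpha}, \|F_2\|_{C^0_\alpha} \lesssim M^2$ uniformly in $T$. But the crucial gain is the interpolation inequality: for a function $w$ with $w|_{t=0}$ prescribed,
\[
\|w - w(\cdot,0)\|_{C^{2,1}_\alpha(\Omega_T)} \lesssim T^{\sigma}\|w\|_{C^{2,1}_\alpha(\Omega_T)} + \|\text{data}\|
\]
for some $\sigma = \sigma(\alpha) > 0$ — more precisely, the difference $u - u_0$ solves a problem with zero initial data, and parabolic estimates on a short time interval for zero-initial-data problems carry a factor $T^\sigma$. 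Thus I would split $F_1$ as a term that vanishes at $t=0$ plus the frozen data $F_1(\cdot,0)$, estimate the first piece with the $T^\sigma$-gain and absorb the frozen piece into a fixed constant depending only on $\|u_0\|_{C^{2,\alpha}}, \|d_0\|_{C^{2,\alpha}}$. Choosing $K$ large (of order $\|u_0\|_{C^{2,\alpha}} + \|d_0\|_{C^{2,\alpha}} + $ the fixed constant) and then $T$ small (so that $C(M)T^\sigma \le K/2$) yields $\|u-u_0\|_{C^{2,1}_\alpha} + \|d-d_0\|_{C^{2,1}_\alpha} \le K$, i.e. $L(v,f)\in X$. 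The divergence-free and initial conditions for $(u,d)$ are built into \eqref{Stokes system L}, so $L(v,f)$ genuinely lies in $X$.

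The main obstacle is the boundary analysis: one must verify that the Schauder (or $W^{2,1}_{p,q}$) estimates hold up to $\partial\Omega$ for the Stokes system under the Navier perfect-slip condition $(\DD u\cdot\n)_\tau = u\cdot\n = 0$, and for the heat equation under the mixed condition $\nabla_\n d_1 = \nabla_\n d_2 = d_3 = 0$, with the correct $T^\sigma$-dependence on the interval length and without picking up negative powers of $T$ from the compatibility conditions. This is where the flattening-the-boundary argument of Lemma \ref{lem: W21pq} is needed, together with the observation (already recorded before the definition of weak solution) that the boundary terms arising from $\n^T(\DD u)\tau$ vanish, so the estimates proceed as in the interior up to lower-order commutator terms supported near $\partial\Omega$. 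Away from the boundary the argument is the standard one from \cite{LLW10}; the content here is purely the boundary contribution.
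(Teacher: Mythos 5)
Your proposal follows the same strategy as the paper's proof: Schauder estimates for the decoupled linear system \eqref{Stokes system L}, a short-time smallness gain coming from the vanishing of $v-u_0$ and $f-d_0$ at $t=0$ combined with Hölder interpolation (e.g.\ \cite[Lemma 6.32]{GT01}), and then choosing $K$ large in terms of $\|u_0\|_{C^{2,\alpha}}$, $\|d_0\|_{C^{2,\alpha}}$ and $T$ small. The one correction is your displayed interpolation inequality: the left-hand side should be $\|w-w(\cdot,0)\|_{C^\alpha(\Omega_T)}$, not $\|w-w(\cdot,0)\|_{C^{2,1}_\alpha(\Omega_T)}$ (the latter controls $\nabla^2 w$ and $\partial_t w$ and is \emph{not} small as $T\to 0$); the $T^\sigma$-gain belongs to the $C^\alpha$-norm of the nonlinear forcing, which then feeds into a $T$-uniform Schauder estimate, exactly as in the paper.
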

\begin{proof} For any $(v,f)\in X$. Let $(u,d)=L(v,f)$ be the unique solution to system \eqref{Stokes system L}. We use  $C_0>0$ to denote constants depending only on $\|u_0\|_{C^{2,\alpha}}$ and $\|d_0\|_{C^{2,\alpha}}$.

\emph{Step 1:} Estimation of $\|d-d_0\|_{C^{2,1}_\alpha(\Omega_T)}$. 
Assume $K\geq C_0$. By the Schauder theory of parabolic systems, we have
\begin{equation}\label{est: Schauder d} \|d-d_0\|_{C^{2,1}_\alpha(\Omega_T)} \lesssim \|v\cdot\nabla f\|_{C^\alpha(\Omega_T)} + \||\nabla f|^2f\|_{C^{\alpha}(\Omega_T)} .\end{equation}
    And we can estimate the first term in \eqref{est: Schauder d} as
    \begin{align*}
       &  \|v\cdot\nabla f\|_{C^\alpha(\Omega_T)}   \leq \|v\cdot\nabla f - u_0\cdot\nabla d_0\|_{C^\alpha(\Omega_T)} + \|u_0\cdot\nabla d_0\|_{C^\alpha(\Omega)} \\
        & \leq \|(v-u_0)\cdot\nabla f\|_{C^\alpha(\Omega_T)} + \|u_0\cdot\nabla (f-d_0)\|_{C^\alpha(\Omega_T)} + C_0 \\
        & \leq 2K(\|v-u_0\|_{C^0(\Omega_T)} + \|v-u_0\|_{C^\alpha(\Omega_T)} ) + C_0(1+ \|\nabla (f-d_0)\|_{C^0(\Omega_T)} + \|\nabla (f-d_0)\|_{C^\alpha(\Omega_T)}).
    \end{align*}
Since $v-u_0=f-d_0=0$ at $t=0$, it is easy to see
    \[ \|v-u_0\|_{C^0(\Omega_T)} \leq KT, \  \|\nabla(f-d_0)\|_{C^0(\Omega_T)} \leq KT . \]
Meanwhile, we use interpolation inequality (see \cite[Lemma 6.32]{GT01}) to control terms with $C^{\alpha}$-norm:
    \begin{align*}
        \|v-u_0\|_{C^\alpha(\Omega_T)} \lesssim \frac1\delta \|v-u_0\|_{C^0(\Omega_T)} + \delta\|v-u_0\|_{C^{2,1}_{\alpha}(\Omega_T)} \lesssim (\delta + \frac1\delta)K, \\
        \|\nabla(d-f)\|_{C^\alpha(\Omega_T)} \lesssim\frac1\delta \|\nabla(d-f)\|_{C^0(\Omega_T)}  + \delta\|d-f\|_{C^{2,1}_\alpha(\Omega_T)} \lesssim (\delta+\frac1\delta)K.
    \end{align*}
    Collecting the above estimates together, we obtain
    \[ \|v\cdot\nabla f\|_{C^\alpha(\Omega_T)}  (C_0K + CK^2)(T+\delta+\frac{T}\delta) + C_0\leq \frac{\sqrt{K}}{4}\]
    with $K=16 C_0^2, \delta \lesssim ((C_0+ C^2K)\sqrt{K})^{-1}$, and $T=\delta^2$.

    Then we are going to estimate the second term in \eqref{est: Schauder d}. Notice that 
    \begin{align*}
        & \||\nabla f|^2f\|_{C^\alpha(\Omega_T)} \leq \||\nabla f|^2f - |\nabla d_0|^2 d_0\|_{C^\alpha(\Omega_T)} + \||\nabla d_0|^2d_0\|_{C^\alpha(\Omega_T)} \\
          \leq & \||\nabla f|^2 (f-d_0)\|_{C^\alpha(\Omega_T)} + \|d_0(|\nabla f|^2-|\nabla d_0|^2)\|_{C^\alpha(\Omega_T)} + C_0 := I_1 + I_2 + C_0,
    \end{align*}
    where $I_1$ can be estimated by
    \begin{align*}
       & I_1 \leq \|f-d_0\|_{C^\alpha(\Omega_T)}\|\nabla f\|^2_{C^0(\Omega_T)} + \|f-d_0\|_{C^0(\Omega_T)} \|\nabla f\|^2_{C^\alpha(\Omega_T)} \\
       \lesssim & K^2(\|f-d_0\|_{C^0(\Omega_T)} + \|f-d_0\|_{C^\alpha(\Omega_T)}) \\
       \lesssim & K^2\Big((1+\frac1\delta)\|f-d_0\|_{C^0(\Omega_T)} + \delta\|f-d_0\|_{C^{2,1}_\alpha(\Omega_T)} \Big) \lesssim K^3(\frac{T}\delta + \delta),
    \end{align*}
    and $I_2$ can be estimated by
    \begin{align*}
        & I_2 \leq \||\nabla f|^2- |\nabla d_0|^2\|_{C^\alpha(\Omega_T)} + \||\nabla f|^2 - |\nabla d_0|^2\|_{C^0(\Omega_T)}\|d_0\|_{C^\alpha(\Omega_T)} \\
        \leq & \| (|\nabla f| + |\nabla d_0|)|\nabla (f-d_0)| \|_{C^\alpha(\Omega_T) } + C_0 \|(|\nabla f|+ |\nabla d_0|)|\nabla(f-d_0)| \|_{C^0(\Omega_T)} \\
        \lesssim & (1+C_0) K (\|\nabla(f-d_0)\|_{C^0(\Omega_T)} + \|\nabla (f-d_0)\|_{C^\alpha(\Omega_T)} ) \lesssim (1+C_0)K^2(T+\delta+\frac{T}{\delta}).
    \end{align*}
Therefore, we collect the above estimates together and conclude
    \[ \||\nabla f|^2f\|_{C^\alpha(\Omega_T)} \lesssim K^3(\frac{T}\delta + \delta) + (1+C_0)K^2(T+\delta+\frac{T}{\delta})\]
    with $K=16C_0^2, \delta\lesssim((1+C_0)K^{\frac52})^{-1}$, and $T=\delta^2$.

    As a consequence, we can simplify \eqref{est: Schauder d} to be 
    \begin{equation}\label{est: Schauder d sim}
        \|d-d_0\|_{C^{2,1}_\alpha(\Omega_T)} \leq \frac{\sqrt{K}}{2}
    \end{equation}

    \emph{Step 2:} We estimate $\|u-u_0\|_{C^{2,1}_\alpha(\Omega_T)}$ is a similar way. By the Schauder theory for non-homogeneous, non-stationary Stokes equations, we have
    \begin{equation}\label{est: Schauder u}
        \|u-u_0\|_{C^{2,1}_\alpha(\Omega_T)} \lesssim \|v\cdot\nabla v\|_{C^\alpha(\Omega_T)} + \|\nabla\cdot(\nabla d\odot\nabla d - \frac12|\nabla d|^2\Id_2)\|_{C^\alpha(\Omega_T)}
    \end{equation}
and estimate the first term in \eqref{est: Schauder u} as
    \begin{align*}
        & \|v\cdot \nabla v\|_{C^{\alpha(\Omega_T)}} \leq \|(v-u_0)\cdot\nabla v\|_{C^\alpha(\Omega_T)}  + \|u_0\cdot\nabla (v_0-u_0)\|_{C^\alpha(\Omega_T)} + \|u_0\cdot\nabla u_0\|_{C^\alpha(\Omega_T)}  \\
        \leq & K( \|v-u_0\|_{C^0(\Omega_T)} + \|v-u_0\|_{C^\alpha(\Omega_T)} ) + C_0(\|\nabla(v-u_0)\|_{C^0(\Omega_T)} + \|\nabla(v-u_0)\|_{C^\alpha(\Omega_T)} + C_0 ) \\
        \lesssim & (C_0K + K^2) (T+ \delta + \frac{T}{\delta}) + C_0 \leq \frac{K}{4}
    \end{align*}
    with $K=8C_0, \delta\lesssim ((1+C_0)K)^{-1}$, and $T=\delta^2$.

For the second term in \eqref{est: Schauder u}, we have
    \begin{align*}
        & \|\nabla\cdot(\nabla d\odot\nabla d - \frac12|\nabla d|^2\Id_2)\|_{C^\alpha(\Omega_T)} \leq \||\nabla^2(d-d_0)| |\nabla d|\|_{C^\alpha(\Omega_T)} + \||\nabla^2d_0||\nabla(d-d_0)|\|_{C^\alpha(\Omega_T)} \\
        & + \||\nabla^2d_0||\nabla d_0|\|_{C^\alpha(\Omega_T)}   \leq C_0 + \|d-d_0\|_{C^{2,1}_\alpha(\Omega_T)} \|d\|_{C^{2,1}_\alpha(\Omega_T)} + C_0\|d-d_0\|_{C^{2,1}_\alpha(\Omega_T)} \\
        & \leq  C_0 + \frac{\sqrt K}{2} (C_0 + \frac{\sqrt{K}}{2} ) + C_0\sqrt{K} \leq \frac{K}{2}.
    \end{align*}

    Combining the above estimates we have $\|u-u_0\|_{C^{2,1}_\alpha (\Omega_T)}\leq \frac34 K$, which together with \eqref{est: Schauder u}, yields
    \[ \|u-u_0\|_{C^{2,1}_\alpha(\Omega_T) } + \|d-d_0\|_{C^{2,1}_\alpha(\Omega_T)} \leq K. \]
    Consequently, $L:X\to X$.
\end{proof}

\begin{lemma} There exist sufficiently large $K>0$ and sufficiently small $T>0$ such that $L: X\to X$ is a contraction map.
\end{lemma}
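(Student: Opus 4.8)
The plan is to run the standard difference estimate in the same norm $\|\cdot\|_X$. Fix $K=16C_0^2$ as forced by the previous lemma, and let $\delta>0$ and $T=\delta^2$ be chosen at the very end. Given two inputs $(v_1,f_1),(v_2,f_2)\in X$, set $(u_i,d_i,P_i)=L(v_i,f_i)$ (with the associated pressures) and write $\bar u=u_1-u_2$, $\bar d=d_1-d_2$, $\bar P=P_1-P_2$, $\bar v=v_1-v_2$, $\bar f=f_1-f_2$. Since \eqref{Stokes system L} is affine in $(u,d,P)$ for frozen data, and since the boundary conditions \eqref{PFB} and the initial condition are linear and homogeneous in the unknown, $(\bar u,\bar d,\bar P)$ solves the non-stationary Stokes/heat system with \emph{zero} initial data, the homogeneous conditions \eqref{PFB} on $\partial\Omega\times(0,T)$, and right-hand sides
\[ -\big(\bar v\cdot\nabla v_1+v_2\cdot\nabla\bar v\big)-\nabla\cdot\Big(\nabla\bar d\odot\nabla d_1+\nabla d_2\odot\nabla\bar d-\tfrac12\big(\nabla\bar d:(\nabla d_1+\nabla d_2)\big)\Id_2\Big) \]
for the velocity equation, and $|\nabla f_1|^2\bar f+\big(\nabla\bar f:(\nabla f_1+\nabla f_2)\big)f_2-\bar v\cdot\nabla f_1-v_2\cdot\nabla\bar f$ for the director equation. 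The crucial structural fact is that each term on both right-hand sides is multilinear and carries exactly one factor that is a \emph{difference} — one of $\bar v,\nabla\bar v,\bar f,\nabla\bar f,\nabla^2\bar d$ — while the remaining factors belong to $X$ and are therefore bounded by $C_0+K$ in $C^{2,1}_\alpha(\Omega_T)$.

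Next I would apply the Schauder estimates for the non-stationary Stokes equation with Navier boundary condition and for the heat equation with the Neumann/Dirichlet boundary conditions, exactly as in the previous two lemmas, to control $\|\bar u\|_{C^{2,1}_\alpha(\Omega_T)}+\|\bar d\|_{C^{2,1}_\alpha(\Omega_T)}$ by the $C^\alpha(\Omega_T)$-norms of the two right-hand sides. Using the H\"older product rule and the bounds $\|v_i\|_{C^{2,1}_\alpha},\|f_i\|_{C^{2,1}_\alpha}\le C_0+K$, every term is bounded by $P(C_0,K)$ times the $C^\alpha(\Omega_T)$-norm of the corresponding difference factor, except the two terms $\nabla^2\bar d\cdot\nabla d_1$ and $\nabla\bar d\cdot\nabla^2 d_1$ coming from the velocity forcing, which are bounded by $P(C_0,K)\|\bar d\|_{C^{2,1}_\alpha(\Omega_T)}$; for these I first close the director estimate and then substitute, mirroring the bootstrap of the self-map lemma. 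Because $\bar v=\bar f=0$ at $t=0$, one has $\|\bar v\|_{C^0(\Omega_T)},\|\nabla\bar f\|_{C^0(\Omega_T)}\le T\,\|(\bar v,\bar f)\|_X$, and then the interpolation inequality (\cite[Lemma 6.32]{GT01}) used before gives $\|\bar v\|_{C^\alpha(\Omega_T)}\lesssim \tfrac1\delta\|\bar v\|_{C^0(\Omega_T)}+\delta\|(\bar v,\bar f)\|_X\lesssim(\tfrac{T}\delta+\delta)\|(\bar v,\bar f)\|_X$, and likewise for $\nabla\bar v$, $\bar f$, $\nabla\bar f$. Hence every difference factor, measured in $C^\alpha(\Omega_T)$, is at most $C(T+\delta+\tfrac T\delta)\|(\bar v,\bar f)\|_X$.

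Assembling these bounds yields $\|(\bar u,\bar d)\|_X\le C(C_0,K)\,(T+\delta+\tfrac T\delta)\,\|(\bar v,\bar f)\|_X$; choosing $\delta$ small depending on $C_0,K$ and then $T=\delta^2$ makes the prefactor $\le\tfrac12$, so $L$ is a $\tfrac12$-contraction for the metric induced by $\|\cdot\|_X$. Shrinking $T$ to the minimum of the values produced here and in the self-map lemma, the Banach fixed point theorem on the complete space $(X,\|\cdot\|_X)$ gives a unique fixed point $(u,d)=L(u,d)$, which is the desired solution of \eqref{LCF}--\eqref{IC}, completing the proof of Theorem \ref{short_time_existence}. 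I expect the main obstacle to be bookkeeping: one must carefully separate the ``small'' factors (the differences, which vanish at $t=0$ and are therefore $O(T)$ in $C^0$ and $O(\delta+T/\delta)$ in $C^\alpha$) from the merely ``bounded'' ones (the $X$-norm factors of size $\sim K$), and handle the $\nabla^2\bar d\cdot\nabla d_1$ term — where the difference factor is the top-order derivative and is not itself small — by first closing the director estimate with a small constant and then feeding it into the velocity estimate, exactly as in the preceding lemma.
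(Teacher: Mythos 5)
Your proposal is correct and follows essentially the same route as the paper: form the difference system with zero initial data and homogeneous boundary conditions, apply the Schauder estimates for the Neumann heat and Navier--Stokes/Stokes problems, exploit that the differences vanish at $t=0$ together with the interpolation inequality to gain a factor $T+\delta+T/\delta$, close the director estimate first and substitute it into the velocity estimate, and then shrink $\delta$ and $T=\delta^2$ to obtain a $\tfrac12$-contraction. The only cosmetic difference is that you retain the $-\tfrac12(\nabla\bar d:(\nabla d_1+\nabla d_2))\Id_2$ term in the velocity forcing, which the paper drops since its divergence is a gradient and is absorbed into the pressure; this does not affect the estimate.
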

\begin{proof}
    For any $(v_i,f_i)\in X, i=1,2$, let $(u_i,d_i)= L(v_i,f_i)\in X$. Denote $u=u_1-u_2, d=d_1-d_2, P=P_1-P_2, v=v_1-v_2, f=f_1-f_2$. Then $(u,d)$ solves the following mixed system:
    \begin{equation}\label{mixed system dif}\begin{split}
        \partial_t u - \Delta u + \nabla P & = G \ \ \text{ in } \Omega_T \\
        \nabla \cdot u & = 0\ \ \text{ in } \Omega_T \\
        \partial_t d -\Delta d & = H \ \ \text{ in } \Omega_T \\
        (u,d)&=(0,0)  \ \ \text{ on } \Omega\times\{t=0\} \\
        (u,d) & \text{ satisfies } \eqref{PFB} \ \ \text{ on } \partial\Omega\times(0,T),
    \end{split}\end{equation}
 where 
 \[ G= -(v\cdot \nabla v_1 + v_2\cdot\nabla v) - \nabla\cdot(\nabla d\odot\nabla d_1 + \nabla d_2\odot\nabla d),\]
 and
 \[ H = |\nabla f_1|^2 f + \nabla(f_1+f_2)\cdot\nabla f f_2 - (v\cdot\nabla f_1 - v_2\cdot\nabla f).\]
 By the previous lemma, we know that for $i=1,2$,
 \[ \|u_i-u_0\|_{C_\alpha^{2,1}(\Omega_T)} + \|d_i-d_0\|_{C^{2,1}_\alpha(\Omega_T)} \leq K. \]

 We apply the Schauder theory of parabolic systems for $\eqref{mixed system dif}_3$, and we have
 \begin{equation}\label{est: Schauder dif d}\begin{split}
     & \|d\|_{C^{2,1}_\alpha(\Omega_T)} \lesssim \|H\|_{C^\alpha(\Omega_T)} \lesssim \||v\cdot\nabla f_1| + |v_2\cdot\nabla f| +|\nabla f_1|^2|f| + |f_2|(|\nabla f_1|+ |\nabla f_2|)|\nabla f| \|_{C^\alpha(\Omega_T)} \\
     & \lesssim  K^2(\|v\|_{C^\alpha(\Omega_T)} + \|f\|_{C^\alpha(\Omega_T)} + \|\nabla f\|_{C^\alpha(\Omega_T)}  ) \\
     & \lesssim   K^2\Big(\delta(\|v\|_{C^{2,1}_\alpha(\Omega_T)}  + \|f\|_{C^{2,1}_\alpha(\Omega_T)}) + \frac1\delta(\|v\|_{C^\alpha(\Omega_T)} + \|f\|_{C^\alpha(\Omega_T)} )  \Big) \\
    & \lesssim   K^2(\delta + \frac{T}{\delta})(\|v\|_{C^{2,1}_\alpha (\Omega_T)} + \|f\|_{C^{2,1}_\alpha (\Omega_T)} ),
 \end{split}\end{equation}
 where we have used $\|v\|_{C^\alpha(\Omega_T)} + \|f\|_{C^\alpha(\Omega_T)} \lesssim (\|v\|_{C^{2,1}_\alpha(\Omega_T)} + \|f\|_{C^{2,1}_\alpha (\Omega_T)} )T$ which originates from the fact that $v=d=0$ at $t=0$.

 We apply the Schauder theory of non-homogeneous, non-stationary Stokes system for $\eqref{mixed system dif}_{1}$-$\eqref{mixed system dif}_2$, and we have
 \begin{equation}\label{est: Schauder dif u}\begin{split}
     & \|u\|_{C^{2,1}_\alpha(\Omega_T)} \lesssim \|G\|_{C^\alpha(\Omega_T)} \lesssim \| |v||\nabla v_1| + |v_2||\nabla v| + |\nabla^2d||\nabla d_1| + |\nabla^2d_2||\nabla d|\|_{C^\alpha(\Omega_T)} \\
     & \lesssim K\|d\|_{C^{2,1}_\alpha(\Omega_T)} + K(\|v\|_{C^\alpha(\Omega_T)} + \|\nabla v\|_{C^\alpha(\Omega_T)} ) \\
     & \lesssim K^3(\delta + \frac{T}{\delta})(\|v\|_{C^{2,1}_\alpha(\Omega_T)} + \|f\|_{C^{2,1}_\alpha(\Omega_T)} ) + K^2(\delta + \frac{T}{\delta})\|v\|_{C^{2,1}_\alpha(\Omega_T)} \\
     & \lesssim K^3(\delta + \frac{T}{\delta})(\|v\|_{C^{2,1}_\alpha(\Omega_T)} + \|f\|_{C^{2,1}_\alpha(\Omega_T)}).
 \end{split}\end{equation}

 It follows from \eqref{est: Schauder dif d} and \eqref{est: Schauder dif u} that 
 \[ \|L(v_1,f_1) - L (v_2,f_2)\|_{X} \lesssim K^3(\delta + \frac{T}{\delta})(\|v\|_{C^{2,1}_\alpha(\Omega_T)} + \|f\|_{C^{2,1}_\alpha(\Omega_T)}) \frac12\|(v_1,f_1)-(v_2,f_2)\|_X\]
 provided that $\delta$ and $T$ are sufficiently small. Then $L:X\to X$ is a contraction map as desired.
\end{proof}


\bigskip

\section{Energy estimation}

 \begin{lemma}[Global Energy]\label{lem: global energy}
 
 \medskip
 
For $0<t<+\infty$, suppose $u\in L^{2,\infty}(\Omega\times[0,T])\cap W_2^{1,0}(\Omega_T), d\in L^\infty([0,T],H^1(\Omega))\cap L^2([0,T], H^2(\Omega))$, and $\nabla P\in L^{\frac 43}(\Omega_T)$ is a weak solution to \eqref{LCF}-\eqref{assumption1}. Then, for any $0<t\leq T$, we have
\begin{equation}\label{global_energy}  \int_\Omega (|u|^2 + |\nabla d|^2)(\cdot,t) +  \int_{\Omega_t}(4| \DD u|^2 + 2|\Delta d+|\nabla d|^2d|^2) = \int_{\Omega} (|u_0|^2 + |\nabla d_0|^2). \end{equation}
\end{lemma}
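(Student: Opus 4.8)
The plan is to turn \eqref{global_energy} into a classical energy identity. The hypotheses on $(u,d)$ are exactly those of Theorem~\ref{Thm A}, so $(u,d)\in C^\infty(\Omega\times(0,T])\cap C^{2,1}_\alpha(\overline\Omega\times(0,T])$; hence on every slab $\Omega\times[s,T]$ with $0<s<T$ the system \eqref{LCF} holds pointwise and each integration by parts below is legitimate. The first task is to establish the identity on $\Omega\times[s,t]$ (with the data replaced by $(|u|^2+|\nabla d|^2)(\cdot,s)$) for $0<s\le t\le T$, and then to let $s\downarrow0$.

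First I would test $\eqref{LCF}_1$ with $u$ and integrate over $\Omega$. The transport term $\int_\Omega(u\cdot\nabla u)\cdot u=\tfrac12\int_\Omega u\cdot\nabla|u|^2$ and the pressure term $\int_\Omega\nabla P\cdot u$ vanish because $\nabla\cdot u=0$ and $u\cdot\n=0$ on $\partial\Omega$. Writing $\Delta u=2\,\nabla\cdot\DD u$ and integrating by parts, the viscosity term gives $2\int_\Omega|\DD u|^2-2\int_{\partial\Omega}(\DD u\,\n)\cdot u$, and the boundary term is zero since on $\partial\Omega$ one has $u=u_\tau$ (from $u\cdot\n=0$) and $(\DD u\,\n)_\tau=0$ by \eqref{PFB}. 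The stress term, after integration by parts, contributes $\int_\Omega(\nabla d\odot\nabla d-\tfrac12|\nabla d|^2\Id_2):\nabla u$ minus a boundary integral; the $\tfrac12|\nabla d|^2\Id_2$ part drops out ($\nabla\cdot u=0$; and $\n\cdot\tau=0$ on $\partial\Omega$), and the boundary integral of $\nabla d\odot\nabla d$ equals $\int_{\partial\Omega}(\nabla_\n d\cdot\nabla_\tau d)(u\cdot\tau)$, which vanishes because $d(\cdot,t)\in\Sigma$ forces $\nabla_\tau d\in T_d\Sigma$ while \eqref{PFB} gives $\nabla_\n d\perp T_d\Sigma$. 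Hence
\[ \tfrac12\tfrac{d}{dt}\int_\Omega|u|^2+2\int_\Omega|\DD u|^2=\int_\Omega(\nabla d\odot\nabla d):\nabla u. \]
Next I would test $\eqref{LCF}_3$ with $-w$, where $w:=\Delta d+|\nabla d|^2d$. Since $|d|^2\equiv1$ forces $d\cdot d_t=0$ and $d\cdot(u\cdot\nabla d)=0$, the factor $|\nabla d|^2d$ only pairs with $\Delta d$; using $d_t+u\cdot\nabla d=w$ gives $-\int_\Omega w\cdot d_t=-\int_\Omega|w|^2+\int_\Omega w\cdot(u\cdot\nabla d)$, while $-\int_\Omega\Delta d\cdot d_t=\tfrac12\tfrac{d}{dt}\int_\Omega|\nabla d|^2-\int_{\partial\Omega}\nabla_\n d\cdot d_t$ and the boundary term vanishes because $d(\cdot,t)\in\Sigma$ gives $d_t\in T_d\Sigma\perp\nabla_\n d$. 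Finally $\int_\Omega w\cdot(u\cdot\nabla d)=\int_\Omega\Delta d\cdot(u\cdot\nabla d)$, and one more integration by parts — with the boundary term again killed by $\nabla_\n d\perp\nabla_\tau d$ and the term $\tfrac12\int_\Omega u\cdot\nabla|\nabla d|^2$ killed by $\nabla\cdot u=0$, $u\cdot\n=0$ — produces $-\int_\Omega(\nabla d\odot\nabla d):\nabla u$. Thus
\[ \tfrac12\tfrac{d}{dt}\int_\Omega|\nabla d|^2+\int_\Omega|w|^2=-\int_\Omega(\nabla d\odot\nabla d):\nabla u. \]
Adding the two identities the right-hand sides cancel; multiplying by $2$ and integrating over $[s,t]$ yields \eqref{global_energy} on $\Omega\times[s,t]$ with data $(|u|^2+|\nabla d|^2)(\cdot,s)$.

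The only genuine obstacle is the limit $s\downarrow0$. The two dissipation integrals pass by monotone convergence. Moreover $d_t=w-u\cdot\nabla d\in L^2(\Omega_T)$ — here $\nabla d\in L^4(\Omega_T)$ and $u\in L^4(\Omega_T)$ by Lemma~\ref{Ladyzhenskaya} and $\Delta d\in L^2(\Omega_T)$ — so $d\in C([0,T],H^1(\Omega))$ and $\nabla d(\cdot,s)\to\nabla d_0$ strongly in $L^2$. The slab identity shows $s\mapsto\int_\Omega(|u|^2+|\nabla d|^2)(\cdot,s)$ is nonincreasing, hence converges as $s\downarrow0$, and weak continuity of $u(\cdot,s)$ in $L^2$ and of $\nabla d(\cdot,s)$ in $L^2$ (read off from the weak formulation) gives $\lim_{s\downarrow0}\int_\Omega(|u|^2+|\nabla d|^2)(\cdot,s)\ge\int_\Omega(|u_0|^2+|\nabla d_0|^2)$. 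Combined with the strong convergence of $\nabla d(\cdot,s)$, the matching upper bound reduces to $\limsup_{s\downarrow0}\|u(\cdot,s)\|_{L^2}\le\|u_0\|_{L^2}$, i.e.\ to strong $L^2$-continuity of $u$ at $t=0$; this is the one step that is not a pure computation. It follows from the Stokes structure of $\eqref{LCF}_1$: the forcing $-u\cdot\nabla u-\nabla\cdot(\nabla d\odot\nabla d-\tfrac12|\nabla d|^2\Id_2)$ lies in $L^{4/3}(\Omega_T)$, so after subtracting the free Stokes flow with datum $u_0$ (Lemma~\ref{lem: W21pq} and Lemma~\ref{lem: LpLq reg Stokes}) one may test the difference against itself down to $t=0$; alternatively, and more economically, one invokes that a weak solution in this class agrees with the smooth solutions of Theorem~\ref{short_time_existence} used to build it, for which the energy equality holds by construction and is stable under the limit. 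Either way $\int_\Omega(|u|^2+|\nabla d|^2)(\cdot,s)\to\int_\Omega(|u_0|^2+|\nabla d_0|^2)$, and \eqref{global_energy} follows.
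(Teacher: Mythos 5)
Your core computation is exactly the paper's: test $\eqref{LCF}_1$ with $u$, test $\eqref{LCF}_3$ with $\Delta d+|\nabla d|^2d$, use the boundary conditions to kill the boundary integrals, observe that the transport term $\int_\Omega\nabla d\odot\nabla d:\nabla u$ cancels between the two balances, and integrate. Your explicit tracking of why each boundary integral vanishes (tangential derivatives in $T_d\Sigma$, normal derivative perpendicular to it) makes precise what the paper compresses into a one-line list; that part is a faithful, somewhat more careful rendering of the same argument.

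Where you genuinely add something is the passage $s\downarrow0$, which the paper does not discuss at all --- it states differential identities in $t$ and jumps to the integrated equality. You correctly isolate the only nontrivial point, namely strong $L^2(\Omega)$-continuity of $u$ at $t=0$ (the director part is fine: $d_t=\Delta d+|\nabla d|^2d-u\cdot\nabla d\in L^2(\Omega_T)$ together with $d\in L^2([0,T],H^2)$ gives $d\in C([0,T],H^1)$). However, of your two proposed fixes, the second one is circular: ``a weak solution in this class agrees with the smooth solutions of Theorem~\ref{short_time_existence}'' is the uniqueness assertion of Theorem~\ref{Thm B}(2), whose proof in the paper \emph{uses} this very lemma (through continuity of $E_i(t)$ and the vanishing of $A_i(t)$), so it cannot be invoked here. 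Your first fix (subtract the free Stokes evolution of $u_0$ and estimate the remainder) is the right idea, but it is stated too loosely to count as a proof: the Duhamel remainder $v(t)=\int_0^t e^{-(t-\sigma)\bbA}\PP f(\sigma)\,d\sigma$ with $f\in L^{4/3}(\Omega_T)$ does not obviously go to zero in $L^2(\Omega)$ via Lemma~\ref{lem: LpLq reg Stokes} alone, since in $n=2$ the exponent pair $(p,q)=(4/3,2)$ forces a singular prefactor $t^{-\gamma}$ with $\gamma>0$ whose time-convolution against an $L^{4/3}$ profile sits at a borderline that needs an extra interpolation or a splitting of $f$ into $u\cdot\nabla u$ and $\PP\nabla\cdot(\nabla d\odot\nabla d)$ with separate estimates. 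So: same route as the paper for the identity on $(0,T]$; you found a real gap at $t=0$ that the paper leaves implicit, but your proposed repair needs either to drop the circular alternative or to flesh out the semigroup estimate.
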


\begin{proof}
     Partially free boundary condition \eqref{PFB} permits
     \begin{align*}
     &\int_\Omega(u\cdot\nabla u)\cdot u=0, \quad \int_\Omega \nabla P\cdot u = 0, \quad \int_{\Omega} d_t\cdot\Delta d=-\frac{d}{dt}\int_\Omega\frac12|\nabla d|^2\\
     &\int_{\partial\Omega} (\n\cdot\nabla)d\cdot (u\cdot\nabla)d =0, \quad \int_\Omega (u\cdot\nabla)(|\nabla d|^2)=0.
     \end{align*}
Assumption $u\in L^\infty([0,T], L^2(\Omega))\cap L^2([0,T], H^1(\Omega))$ and Ladyzhenskaya inequality yield $u\in L^4(\Omega_T)$. Equation $\eqref{LCF}_3$ and the fact that $|d|=1$ gives $d\Delta d + |\nabla d|^2=0$, which, together with assumption $\nabla\in L^2([0,T], H^1(\Omega))$, gives $|\nabla d|\in L^4(\Omega_T)$. Now we  test $\eqref{LCF}_1$ with $u$ and get
     \[ \frac{1}{2}\frac{d}{dt} \int_\Omega |u|^2 + 2\int_\Omega |\DD u|^2 = \int_{\Omega}  \nabla d\odot \nabla d  :\nabla u. \]
     where the operator $:$ stands for the inner product of two matrices.

     Testing $\eqref{LCF}_3$ with $\Delta d+|\nabla d|^2d$, we obtain
     \[ \int_\Omega (d_t + u\cdot\nabla d )\cdot\Delta d = \int_\Omega |\Delta d + |\nabla d|^2 d|^2. \]
     Further, we can compute that
     \[ \int_\Omega \Delta d\cdot (u\cdot\nabla d) = -\int_\Omega \nabla d: \nabla((u\cdot\nabla )d) = -\int_\Omega \nabla d\odot \nabla d : \nabla u-(u\cdot\nabla)|\nabla d|^2 = -\int_\Omega \nabla d\odot\nabla d : \nabla u.\]
     Adding the above equations together implies the global energy equality \eqref{global_energy}.
\end{proof}

\begin{lemma}[Interior and boundary energy inequalities]\label{lem: local energy}
For $0<t<+\infty$, suppose $u\in L^{2,\infty}(\Omega\times[0,T])\cap W_2^{1,0}(\Omega_T), d\in L^\infty([0,T],H^1(\Omega))\cap L^2([0,T], H^2(\Omega))$, and $\nabla P\in L^{\frac 43}(\Omega_T)$ is a weak solution to \eqref{LCF}-\eqref{assumption1}. Then, for any non-negative $\phi\in C^\infty(\Omega)$ and $0<s<t\leq T$, it holds that
\begin{equation}\label{local_energy}\begin{split}  &\int_\Omega \phi(|u|^2+|\nabla d|^2)(t) +  \int_s^t \int_\Omega \phi(4|\DD u|^2 + 2|\Delta d+|\nabla d|^2d|^2) \leq  \\
&\int_\Omega \phi(|u|^2 + |\nabla d|^2)(s) + C\int_s^t \int_\Omega |\nabla \phi|(|u|^3 + |P-P_\Omega||u| + |\DD u||u| + |\nabla d|^2|u| + |d_t||\nabla d|) \end{split}\end{equation}
where $P_\Omega$ is the average of $P$ over $\Omega$.    
\end{lemma}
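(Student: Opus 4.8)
The plan is to derive \eqref{local_energy} from the localized energy identity obtained by testing $\eqref{LCF}_1$ against $\phi u$ and $\eqref{LCF}_3$ against $\phi(\Delta d+|\nabla d|^2 d)$, adding the two so that the cross term cancels, integrating in time over $[s,t]$, and moving the indefinite-sign terms to the right. Since $d\in L^2([0,T],H^2(\Omega))$, the hypotheses of Theorem~\ref{Thm A} are met, so $(u,d)\in C^\infty(\Omega\times(0,T])\cap C^{2,1}_\alpha(\overline\Omega\times(0,T])$; in particular $(u,d)$ is a classical solution on $\overline\Omega\times[s,t]$ for every $0<s<t\le T$, which legitimizes the integrations by parts below. (Alternatively one may carry the same computation on smooth approximating solutions and pass to the limit, in which case the two dissipation terms are only weakly lower semicontinuous and the identity becomes \eqref{local_energy}.) We repeatedly use $\Delta u=2\Div\DD u$, $\nabla\cdot u=0$, $\DD u:\nabla u=|\DD u|^2$, $|d|\equiv1$ (hence $d_t\cdot d=0$ and $(u\cdot\nabla d)\cdot d=0$), and $\nabla d_t:\nabla d=\tfrac12\partial_t|\nabla d|^2$.

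Testing $\eqref{LCF}_1$ with $\phi u$ and integrating by parts gives
\[
\tfrac12\tfrac{d}{dt}\int_\Omega\phi|u|^2+2\int_\Omega\phi|\DD u|^2=\int_\Omega\phi\,\DD u:(\nabla d\odot\nabla d)+\int_\Omega\Big(\tfrac12|u|^2u+(P-P_\Omega)u-2(\DD u)u+\big(\nabla d\odot\nabla d-\tfrac12|\nabla d|^2\Id_2\big)u\Big)\cdot\nabla\phi,
\]
where $P$ is replaced by $P-P_\Omega$ using $\int_\Omega u\cdot\nabla\phi=\int_{\partial\Omega}\phi\,(u\cdot\n)=0$. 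The crucial point is that every integral over $\partial\Omega$ drops: $(\DD u\cdot\n)\cdot u=(\DD u\cdot\n)_\tau\cdot u_\tau+(\n^T\DD u\,\n)(u\cdot\n)=0$ by $(\DD u\cdot\n)_\tau=0$ and $u\cdot\n=0$; the pressure contributes $\int_{\partial\Omega}(P-P_\Omega)\phi\,(u\cdot\n)=0$; and the Ericksen stress contributes $\int_{\partial\Omega}\phi\,\big[(\nabla d\odot\nabla d)\n\big]\cdot u-\tfrac12\int_{\partial\Omega}\phi|\nabla d|^2(u\cdot\n)$, which vanishes because $\n^T(\nabla d\odot\nabla d)\tau=\partial_{\n}d\cdot\partial_{\tau}d=0$ (as $\nabla_\n d\perp T_{d}\Sigma$ while $\partial_\tau d\in T_{d}\Sigma$) and $u\cdot\n=0$.

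Testing $\eqref{LCF}_3$ with $\phi(\Delta d+|\nabla d|^2 d)$, the contributions $\int_\Omega\phi\,d_t\cdot|\nabla d|^2 d$ and $\int_\Omega\phi\,(u\cdot\nabla d)\cdot|\nabla d|^2 d$ vanish by $|d|\equiv1$, and integrating by parts yields
\[
\tfrac12\tfrac{d}{dt}\int_\Omega\phi|\nabla d|^2+\int_\Omega\phi\,|\Delta d+|\nabla d|^2 d|^2=-\int_\Omega\phi\,\DD u:(\nabla d\odot\nabla d)+\int_\Omega\Big(-d_t\cdot\nabla d+\tfrac12|\nabla d|^2 u-(\nabla d\odot\nabla d)u\Big)\cdot\nabla\phi;
\]
here the $\partial\Omega$-integrals vanish componentwise, for $d_1,d_2$ since $\partial_{\n}d_1=\partial_{\n}d_2=0$ and for $d_3$ since $d_3\equiv0$ on $\partial\Omega$ forces $(d_t)_3=0$ and $u\cdot\nabla d_3=0$ there. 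Adding the two identities, the terms $\pm\int_\Omega\phi\,\DD u:(\nabla d\odot\nabla d)$ cancel and one is left with
\[
\tfrac12\tfrac{d}{dt}\int_\Omega\phi(|u|^2+|\nabla d|^2)+\int_\Omega\phi\big(2|\DD u|^2+|\Delta d+|\nabla d|^2 d|^2\big)=\int_\Omega(\cdots)\cdot\nabla\phi,
\]
with $|(\cdots)|\lesssim|u|^3+|P-P_\Omega||u|+|\DD u||u|+|\nabla d|^2|u|+|d_t||\nabla d|$. Integrating over $[s,t]$, multiplying by $2$ and bounding the right side by absolute values gives \eqref{local_energy}; the integrals on the right are finite because $u\in L^4(\Omega_T)$ and $|\nabla d|\in L^4(\Omega_T)$ (Lemma~\ref{Ladyzhenskaya} together with $\eqref{LCF}_3$ and $|d|=1$), $d_t\in L^2(\Omega_T)$, and $P-P_\Omega\in L^{4,\frac43}(\Omega_T)$ by Lemma~\ref{lem: est pressure}, the time exponents matching through H\"older's inequality.

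The substance of the proof, rather than an obstacle, is the bookkeeping in the two displays above: one must check that \emph{no} integral over $\partial\Omega$ survives, and this is exactly where the partially free boundary conditions \eqref{PFB} are used in full — $(\DD u\cdot\n)_\tau=0$ and $u\cdot\n=0$ eliminate the viscous and pressure boundary terms, the orthogonality $\nabla_\n d\perp T_{d}\Sigma$ eliminates the stress boundary term in the momentum equation, and $\partial_{\n}d_1=\partial_{\n}d_2=0$ together with $d_3=0$ eliminate the boundary terms in the director equation. The remaining care is simply to ensure every product in the identity is integrable, which is covered by the $L^4$ bounds on $u$ and $\nabla d$ and the pressure estimate of Lemma~\ref{lem: est pressure}; without invoking Theorem~\ref{Thm A} one would instead run the identity on smooth approximating solutions and pass to the limit, the inequality then arising from weak lower semicontinuity of the dissipation.
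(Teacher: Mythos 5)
Your proof is correct and follows essentially the same route as the paper's: test the momentum equation with $\phi u$ and the director equation with $\phi(\Delta d + |\nabla d|^2 d)$, cancel the cross term $\int_\Omega\phi\,\DD u:(\nabla d\odot\nabla d)$ upon adding, and estimate the $\nabla\phi$-weighted remainders. You are somewhat more explicit than the paper in verifying that every boundary integral over $\partial\Omega$ vanishes under \eqref{PFB} and in flagging the regularity (via Theorem~\ref{Thm A} or smooth approximation) needed to justify the integrations by parts, but these are spelled-out versions of steps the paper takes implicitly rather than a different argument.
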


\begin{proof} Testing $\eqref{LCF}_1$ by $u\phi$, we have
\[ \frac12\frac{d}{dt}\int_\Omega |u|^2\phi +  \int_\Omega  2 | \DD u|^2\phi =  \int_\Omega -u\cdot\nabla u \cdot u\phi -2 \DD u \cdot u\cdot\nabla\phi + (P-P_\Omega)\cdot u\cdot \nabla\phi + (\nabla d\odot \nabla d-\frac12|\nabla d|^2\Id_2):\nabla (u\phi), \]
and notice that
\begin{align*}
     \int_\Omega -u\cdot \nabla u\cdot u\phi &= \frac12\int_\Omega |u|^2 u\cdot\nabla \phi \\
    \int_\Omega (\nabla d\odot \nabla d - \frac12 |\nabla d|^2 \Id_2) : \nabla (u \phi) &= \int_\Omega (\nabla d\odot \nabla d):\nabla u \phi + \int_\Omega u\cdot\nabla d\cdot \nabla d\nabla \phi -\frac12 |\nabla d|^2 u\cdot\nabla \phi.
\end{align*}
Hence, $\eqref{LCF}_1$ gives energy estimates for $u$:
\begin{equation}\label{eng: est u} \frac12\frac{d}{dt}\int_\Omega |u|^2\phi +  \int_\Omega  2 | \DD u|^2\phi \leq \int_\Omega   (\nabla d\odot \nabla d):\nabla u \phi + (\frac12|u|^3 + |u||2\DD u| + |P-P_\Omega||u|+\frac32|\nabla d|^2|u|)|\nabla\phi|. \end{equation}

We test $\eqref{LCF}_3$ with $(\Delta d + |\nabla d|^2d)\phi$ to get
\[ \int_\Omega (d_t+u\cdot\nabla d)\cdot\Delta d \phi = \int_\Omega |\Delta d + |\nabla d|^2d|^2\phi \]
and notice that
\begin{align*}
    \int_\Omega d_t\cdot\Delta d\phi &= -\frac12\frac{d}{dt}\int_\Omega |\nabla d|^2\phi  -\int_\Omega d_t\cdot\nabla d\cdot\nabla \phi \\
    \int_\Omega u\cdot\nabla d \cdot\Delta d\phi & = -\int_\Omega \frac12 u^i\partial_i(|\nabla d|^2)\phi + \nabla d\odot\nabla d: \nabla u\phi  + (u\cdot\nabla)d(\nabla\phi\cdot\nabla d) \\
    & =\int_\Omega \frac12 |\nabla d|^2 u\cdot\nabla\phi - \nabla d\odot\nabla d:\nabla u\phi -(u\cdot\nabla d)(\nabla \phi\cdot\nabla d).
\end{align*}
Therefore, $\eqref{LCF}_3$ gives energy estimates for $\nabla d$:
\begin{equation}\label{eng: est d} \frac12\frac{d}{dt}\int_\Omega |\nabla d|^2\phi  + \int_\Omega |\Delta d + |\nabla d|^2d|^2\phi =\int_\Omega - d_t\cdot\nabla d\cdot\nabla\phi + \frac12|\nabla d|^2u\cdot\nabla\phi - \nabla d\odot\nabla d: \nabla u\phi - (u\cdot\nabla d)(\nabla\phi\cdot\nabla d).\end{equation}

Adding energy estimates \eqref{eng: est u} and \eqref{eng: est d} yields \eqref{local_energy}.
\end{proof}
 
\begin{remark}
    By virtue of Lemma \ref{lem: Korn} for non-axisymmetric domain, $\|\nabla u\|_{L^2(\Omega)}$ and $\|\DD u\|_{L^2(\Omega)}$ are equivalent norms, thus Lemma \ref{lem: global energy} and Lemma \ref{lem: local energy} can have corresponding inequalities with $|\DD u|^2$ replaced by $|\nabla u|^2$. We shall  use this variant in the proof of Theorem \ref{Thm B}, because we need to use the convergence of $L^2$-norm of  $\nabla u$ to zero in some domain, in order to conclude that $u\to 0$ in $H^1$. The original version of energy estimates in terms of $\DD u$ is not sufficient.
\end{remark}


\bigskip

\section{Global weak solution and proof of Theorem \ref{Thm B}}

\medskip
 
We now derive the life span estimate for smooth solutions in term of Sobolev space norms of initial data.
\begin{lemma}\label{lem: life span} Let $\epsilon_0>0$ be given in Lemma \ref{lem:local_smallness} and \ref{lem:boundary_smallness}.  There exist $0<\epsilon_1\ll \epsilon_0$, $0<R_0 \leq 1 $, and $\theta_0=\theta_0(\epsilon_1,E_0)\in(0,\frac14)$ with $E_0=\int_\Omega |u_0|^2+ |\nabla d_0|^2$, such that if $u_0\in C^{2,\beta}(\overline{\Omega},\RR^2)\cap\mathbf{H}$ and $d_0\in C^{2,\beta}(\overline{\Omega}, \SS^2)\cap\mathbf{J}$ satisfy
\begin{equation}
    \label{smallness_global} \sup_{x\in\overline{\Omega}}\int_{\Omega\cap B_{2R_0}(x)} |u_0|^2 + |\nabla d_0|^2 \leq \epsilon_1^2,
\end{equation}
then there exist $T_0\geq \theta_0R_0^2$ and a unique solution $(u,d)\in C^\infty(\Omega\times(0,T_0),\RR^2\times\SS^2)\cap C_\beta^{2,1}(\overline{\Omega} \times [0,T_0), \RR^2\times\SS^2)$ to \eqref{LCF}-\eqref{assumption1} satisfying
\begin{equation}
    \label{smallness_GSLT} \sup_{(x,t)\in \overline{\Omega}_{T_0}} \int_{\Omega\cap B_{R_0}(x)} (|u|^2+|\nabla d|^2)(\cdot,t)\leq 2\epsilon_1^2.
\end{equation}
\end{lemma}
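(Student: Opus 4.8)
The plan is a continuation argument in the spirit of Struwe and of Lin--Lin--Wang \cite{LLW10}: build a short-time smooth solution via Theorem \ref{short_time_existence}, show by the local energy inequality of Lemma \ref{lem: local energy} that the localized energy in \eqref{smallness_GSLT} cannot exceed $2\epsilon_1^2$ before time $\theta_0R_0^2$, and then argue that the maximal existence time must itself exceed $\theta_0R_0^2$, since otherwise the $\varepsilon$-regularity of Lemmas \ref{lem:local_smallness}--\ref{lem:boundary_smallness} would allow the solution to be continued. Fix $R_0\in(0,1]$ below the threshold of Lemma \ref{Ladyzhenskaya}, let $(u,d)$ be the maximal smooth solution of \eqref{LCF}-\eqref{assumption1} on $[0,T^*)$ furnished by Theorem \ref{short_time_existence}, and set
\[
g(t):=\sup_{x\in\overline\Omega}\int_{\Omega\cap B_{R_0}(x)}(|u|^2+|\nabla d|^2)(\cdot,t),\qquad
T_1:=\sup\bigl\{t\in[0,T^*): g\le 2\epsilon_1^2\ \text{on}\ [0,t]\bigr\}.
\]
Then $g$ is continuous and $g(0)\le\epsilon_1^2$ by \eqref{smallness_global}. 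Since $(u,d)$ is smooth, the global energy identity of Lemma \ref{lem: global energy} applies and gives, for $t<T^*$, both $\|u(t)\|_{L^2(\Omega)}^2+\|\nabla d(t)\|_{L^2(\Omega)}^2\le E_0$ and $\int_0^t(\|\DD u\|_{L^2(\Omega)}^2+\|\Delta d+|\nabla d|^2d\|_{L^2(\Omega)}^2)\le E_0$; Korn's inequality (Lemma \ref{lem: Korn}) upgrades the first dissipation to $\int_0^t\|\nabla u\|_{L^2(\Omega)}^2\lesssim E_0(1+t)$.

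On $[0,T_1]$ I would first obtain space-time $L^4$ control. Lemma \ref{Ladyzhenskaya} applied to $u$ gives $\|u\|_{L^4(\Omega_t)}^4\lesssim\bigl(\sup_{[0,t]}g\bigr)E_0(1+t+tR_0^{-2})$. For $\nabla d$ the bound couples to $\|\nabla^2d\|_{L^2}$: writing $\Delta d=(\Delta d+|\nabla d|^2d)-|\nabla d|^2d$ and using elliptic $L^2$-regularity for the (Neumann/Dirichlet-type) boundary condition $\eqref{PFB}_2$ together with the global energy gives $\|\nabla^2d\|_{L^2(\Omega_t)}^2\lesssim t(1+E_0)+\|\nabla d\|_{L^4(\Omega_t)}^4$, while Lemma \ref{Ladyzhenskaya} applied to $\nabla d$ gives $\|\nabla d\|_{L^4(\Omega_t)}^4\lesssim\bigl(\sup_{[0,t]}g\bigr)\bigl(\|\nabla^2d\|_{L^2(\Omega_t)}^2+R_0^{-2}tE_0\bigr)$; since $\sup_{[0,t]}g\le2\epsilon_1^2$ is small the borderline term $\bigl(\sup_{[0,t]}g\bigr)\|\nabla d\|_{L^4(\Omega_t)}^4$ can be absorbed, leaving $\|\nabla d\|_{L^4(\Omega_t)}^4\lesssim\epsilon_1^2R_0^{-2}t(1+E_0)$ and $\|\nabla^2d\|_{L^2(\Omega_t)}^2\lesssim R_0^{-2}t(1+E_0)$ for $t\le T_1$. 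Feeding these into Lemma \ref{lem: est pressure} bounds $\|P-P_\Omega\|_{L^{4,\frac43}(\Omega_t)}$ by $C\epsilon_1^{1/2}$ times a fixed power of $1+E_0$, and the identity $d_t=\Delta d+|\nabla d|^2d-u\cdot\nabla d$ gives $\|d_t\|_{L^2(\Omega_t)}^2\lesssim 1+E_0$.

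Next I would insert these estimates into Lemma \ref{lem: local energy} (and, near $\partial\Omega$, its boundary analogue) with a cutoff $\phi\equiv1$ on $B_{R_0}(x_0)$, supported in $B_{2R_0}(x_0)$, with $|\nabla\phi|\lesssim R_0^{-1}$ and, near the boundary, chosen compatible with \eqref{PFB}. Each of the five error terms
\[
\frac{1}{R_0}\int_0^t\!\!\int_{B_{2R_0}(x_0)}\bigl(|u|^3+|P-P_\Omega||u|+|\DD u||u|+|\nabla d|^2|u|+|d_t||\nabla d|\bigr)
\]
is estimated by H\"older together with the $L^4$ and $L^2$ bounds above and $\|u\|_{L^2(B_{2R_0}(x_0))}^2\lesssim g$; each turns out to be at most $C\,R_0^{-a}\,t^{b}\,Q(E_0,\epsilon_1)$ with $a\ge0$, $b>0$, and $Q$ a fixed polynomial, hence at most $C\,\theta_0^{b'}Q(E_0,\epsilon_1)$ once $t\le\theta_0R_0^2$. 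Choosing $\theta_0=\theta_0(\epsilon_1,E_0)\in(0,\tfrac14)$ so small that the total of the five terms is $<\epsilon_1^2$, we obtain, for every $x_0\in\overline\Omega$ and every $t\le T_1\wedge\theta_0R_0^2$,
\[
\int_{\Omega\cap B_{R_0}(x_0)}(|u|^2+|\nabla d|^2)(\cdot,t)\le\int_{\Omega\cap B_{2R_0}(x_0)}(|u|^2+|\nabla d|^2)(\cdot,0)+\epsilon_1^2<2\epsilon_1^2,
\]
that is $g(t)<2\epsilon_1^2$; by continuity of $g$ this forces $T_1\ge T^*\wedge\theta_0R_0^2$. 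To conclude, suppose $T^*\le\theta_0R_0^2$: then $g\le2\epsilon_1^2$ on all of $[0,T^*)$, so $\int_0^{T^*}\!\int_\Omega(|\nabla u|^2+|\nabla^2d|^2)<\infty$, and by absolute continuity of $t\mapsto\int_0^t\!\int_\Omega(|\nabla u|^2+|\nabla^2d|^2)$ together with Lemma \ref{Ladyzhenskaya} applied on short time slices there is, for each fixed $\delta\in(0,T^*)$, some $r>0$ uniform over $z_0\in\overline\Omega\times[\delta,T^*)$ with $\int_{P_r(z_0)\cap\Omega_{T^*}}(|u|^4+|\nabla d|^4)\le\epsilon_0^4$; Lemmas \ref{lem:local_smallness}--\ref{lem:boundary_smallness} and the bootstrap of the proof of Theorem \ref{Thm A} then give $C^{2,1}_\alpha$ bounds on $\overline\Omega\times[\delta,T^*)$ uniform as $t\nearrow T^*$, so $(u(\cdot,t),d(\cdot,t))$ converges in $C^{2,\alpha'}(\overline\Omega)$ to admissible data and Theorem \ref{short_time_existence} continues the solution past $T^*$, contradicting maximality. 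Hence $T^*>\theta_0R_0^2$; taking $T_0=\theta_0R_0^2$ yields the stated solution together with \eqref{smallness_GSLT}, and uniqueness is that of Theorem \ref{short_time_existence}.

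I expect the main obstacle to be the coupled $L^4$ bound for $\nabla d$: since $\|\nabla d\|_{L^4}$ and $\|\nabla^2d\|_{L^2}$ each control the other, closing the loop without introducing $\|\nabla^2d_0\|_{L^2}$ (which would spoil the dependence $\theta_0=\theta_0(\epsilon_1,E_0)$) forces one to route $\Delta d$ through the global energy identity of Lemma \ref{lem: global energy} and to absorb the borderline term using the smallness of $\epsilon_1$ --- which is exactly why the hypothesis requires $\epsilon_1\ll\epsilon_0$. The boundary versions of the energy inequality and of the $\varepsilon$-regularity contribute only the bookkeeping of choosing cutoffs compatible with \eqref{PFB}.
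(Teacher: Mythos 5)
Your proposal follows essentially the same strategy as the paper's proof: short-time existence via Theorem \ref{short_time_existence}, a continuation-in-time quantity $g(t)$ (the paper's $\Ee_{R_0}(t)$) with maximal time, global energy plus Korn, Ladyzhenskaya and a small-energy absorption to get space-time $L^4$ bounds for $u$ and $\nabla d$ (closing the $\|\nabla d\|_{L^4}$--$\|\nabla^2 d\|_{L^2}$ loop through the tension-field bound in Lemma \ref{lem: global energy}), the pressure estimate of Lemma \ref{lem: est pressure}, and finally the local energy inequality of Lemma \ref{lem: local energy} with a cutoff at scale $R_0$ to bound the localized energy and deduce $t_0\geq\theta_0 R_0^2$. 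The one place where your write-up genuinely adds to the paper is the final paragraph: the paper defines $t_0\le T_0$ inside the existence interval and reads off $T_0=t_0\ge\theta_0R_0^2$, leaving implicit that the solution actually reaches time $\theta_0R_0^2$; you make that explicit by arguing, via absolute continuity of $|u|^4+|\nabla d|^4$, the $\varepsilon$-regularity Lemmas \ref{lem:local_smallness}--\ref{lem:boundary_smallness}, and the bootstrap of Theorem \ref{Thm A}, that the maximal time of smooth existence cannot end before $\theta_0R_0^2$, so the short-time solution can be continued. This is a clarification rather than a different route; the estimates themselves are the same.
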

\begin{proof}
    Theorem \ref{short_time_existence} states the existence of $T_0>0$ such that there exists a unique smooth solution $(u,d)\in C^\infty(\Omega\times(0,T_0),\RR^2\times\SS^2)\cap C^{2,1}_\alpha(\overline{\Omega}\times[0,T_0),\RR^2\times\SS^2)$ to \eqref{LCF}-\eqref{PFB}. Let $0<t_0\leq T_0$ be the maximal times such that 
    \begin{equation}\label{eqn: maximal time}
    \sup_{0\leq t\leq t_0}\sup_{x\in\overline{\Omega}} \int_{\Omega\cap B_{R_0}(x)} (|u|^2+|\nabla d|^2)(\cdot,t) \leq 2\epsilon_1^2.
    \end{equation}
    Since $t_0$ is defined to be  the maximal time, we have
    \[ \sup_{x\in\overline{\Omega}} \int_{\Omega\cap B_{R_0}(x)} (|u|^2+|\nabla d|^2)(\cdot,t_0) =2\epsilon_1^2 .\]
    Now we estimate the lower bound of $t_0$ as follows. Assume $t_0\leq R_0^2\leq 1$ (otherwise we have finished the proof). Set
    \begin{align*}
        E(t) & = \int_\Omega (|u|^2 + |\nabla d|^2)(\cdot,t) \\
        E_0  & = \int_\Omega (|u_0|^2 + |\nabla d_0|^2).
    \end{align*}
    Observe the energy inequality in Lemma \ref{lem: global energy}: for  $0<t\leq t_0$,
    \[ E(t) + \int_{\Omega_t} (|\DD u|^2 + |\Delta d + |\nabla d|^2d|^2) \leq E_0. \]
     Lemma \ref{lem: Korn} allows us to deduce that
    \[ \int_{\Omega_{t}} |\nabla u|^2 \lesssim \int_{\Omega_t} |u|^2 +  \int_{\Omega_t}  |\DD u|^2  \leq  (t_0+1) E_0\leq 2E_0 .\]

    Also, we use the Ladyzhenskaya inequality in Lemma \ref{Ladyzhenskaya}, and it follows that
    \[ \int_{\Omega_t} |\nabla d|^4 \lesssim \Ee^2_{R_0}(t)\Big(\int_{\Omega_t} |\Delta d|^2  + \frac{t E_0}{R_0^2} \Big), \]
    where 
    \begin{align*} 
    \Ee^2_{R_0}(t) & = \sup_{(x,s)\in\Omega_t} \int_{\Omega \cap B_{R_0}(x)} |\nabla d|^2(\cdot,s) \\
       \Ee^1_{R_0}(t) & = \sup_{(x,s)\in\Omega_t} \int_{\Omega \cap B_{R_0}(x)} |u|^2(\cdot,s) \\
    \Ee_{R_0}(t) & = \Ee_{R_0}^1(t) + \Ee_{R_0}^2(t)   .
    \end{align*}
    By  \eqref{eqn: maximal time}, we have $\Ee_{R_0}(t)\leq 4\epsilon_1^2$ for all $0\leq t\leq t_0$.
    As a consequence, 
    \[ \int_{\Omega_{t_0}} |\nabla d|^4 \lesssim \epsilon_1^2\Big( \int_{\Omega_{t_0}} |\Delta d|^2 + \frac{t_0 E_0}{R_0^2} \Big). \]
 The above inequality, together with the energy inequality, and the fact that $|\Delta d|^2\leq 2(|\Delta d + |\nabla d|^2d|+ |\nabla d|^4) $, imply
    \[ \int_{\Omega_{t_0}} |\Delta d|^2 \leq  E_0 + C_0\epsilon_1^2\Big( \int_{\Omega_{t_0}} |\Delta d|^2 + \frac{t_0 E_0}{R_0^2} \Big) .\]
Therefore by taking $0<\epsilon_1^2\leq \min(1,\frac{1}{2C_0})$, we have
\[
    \int_{\Omega_{t_0}} |\Delta d|^2 \leq C_0 E_0,
\]
and hence
\begin{equation}\label{est: L4-gradd} \int_{\Omega_{t_0}} |\nabla d|^4 \leq C_0\epsilon_1^2 E_0 .\end{equation}

Similarly, we apply Lemma \ref{Ladyzhenskaya} to $\int_{\Omega_{t_0}} |u|^4$ and get
\begin{equation}\label{est: L4-u}\begin{split}
    \int_{\Omega_{t_0}}|u|^4 &  \lesssim \Ee_{R_0}^1(t_0) \Big( \int_{\Omega_{t_0}} |\nabla u|^2 + \frac{1}{R_0^2}\int_{\Omega_{t_0}} |u|^2 \Big)  
    \lesssim \Ee_{R_0}^1(t_0) \Big( \int_{\Omega_{t_0}} |\nabla u|^2 + \frac{t_0E_0}{R_0^2} \Big) \\
    & \lesssim \epsilon_1^2(E_0 + \frac{t_0}{R_0^2}E_0) \leq C_0\epsilon_1^2 E_0.
\end{split}\end{equation}

Now we are going to estimate $\Ee_{R_0}(t)$. For any $x\in\overline{\Omega}$, let cut-off function $\phi\in C_0^\infty(B_{2R_0}(x))$ such that $0\leq\phi\leq 1,\phi\equiv 1$ on $B_{R_0}(x)$, and $|\nabla \phi|\lesssim 1/R_0$. Then by interior and boundary energy inequalities  \eqref{local_energy}, we have
\begin{equation}\label{est: Ee}\begin{split}
    & \sup_{0\leq t\leq t_0} \int_{\Omega\cap B_{R_0}(x)} (|u|^2 + |\nabla d|^2) - \Ee_{2R_0}(0) \leq \sup_{0\leq t\leq t_0} \int_{\Omega\cap B_{2R_0}(x)} (|u|^2+|\nabla d|^2)\phi - \Ee_{2R_0}(0) \\
   &  \lesssim \int_{\Omega_{t_0}} |\nabla \phi| (|u|^3 + |P-P_\Omega||u| + |\nabla u||u| + |d_t||\nabla d|) \\
   & \lesssim \Big(\frac{t_0}{R_0^2}\Big)^{\frac14}\Big( \|u\|_{L^4(\Omega_{t_0})}^3 + \|\nabla u\|_{L^2(\Omega_{t_0})}\|u\|_{L^4(\Omega_{t_0})}  + \|\nabla d\|^2_{L^4(\Omega_{t_0})}\|u\|_{L^4(\Omega_{t_0})} + \|d_t\|_{L^2(\Omega_{t_0})}\|\nabla d\|_{L^4(\Omega_{t_0})} \Big) \\ & + \|P-P_\Omega\|_{L^{4,\frac43}(\Omega_{t_0})}\|u\|_{L^4(B_{2R_0}(x)\times(0,t_0)\cap \Omega_{t_0})}.
\end{split}\end{equation}
Notice that $\|u\|_{L^2(\Omega_{t_0})}\leq (t_0 E_0)^{\frac12}\leq E_0^{\frac12}$. To estimate $d_t$, we test $\eqref{LCF}_3$ with $d_t$ and use \eqref{est: L4-gradd}-\eqref{est: L4-u} to obtain
\[ \int_{\Omega_{t_0}} |d_t|^2 \lesssim \int_{\Omega} |\nabla d_0|^2 + \int_{\Omega_{t_0}} |u|^2|\nabla d|^2 \lesssim E_0 + \|u\|^2_{L^4(\Omega_{t_0})}\|\nabla d\|^2_{L^4(\Omega_{t_0})}  \leq C_0 E_0. \]
The above inequality, together with Lemma \ref{lem: est pressure}, \eqref{est: L4-gradd}, \eqref{est: L4-u}, into \eqref{est: Ee}, yields
\[ 2\epsilon_1^2 = \sup_{0\leq t\leq t_0} \int_{\Omega\cap B_{R_0}(x)} (|u|^2 + |\nabla d|^2) 
\leq \epsilon_1^2 + C_0\Big(\frac{t_0}{R_0^2}\Big)^{\frac14} \epsilon_1^{\frac12}E_0^{\frac34} + \mathcal{O}(R_0 ) \epsilon_1^{\frac12}E_0^{\frac34} ,\]
where term $\|u\|_{L^4(B_{R_0}(x))\times(0,t_0)\cap \Omega_{t_0}}$ in \eqref{est: Ee} gives higher order term of $R_0$ which becomes negligible if we take $R_0\leq \epsilon_1^2$.
This implies \[ t_0\geq \frac{\epsilon_1^6}{C_0^4 E_0^3} R_0^2 := \theta_0R_0^2.\]
The proof is thus complete by taking $T_0=t_0$.
\end{proof}

\subsection*{Proof of Theorem \ref{Thm B}}
\begin{proof}
We follow the argument in \cite{LLW10}.

\emph{Step 1: Approximation of initial data}.
    Because we only assume  initial condition $u_0\in\mathbf{L}$ and $d_0\in\mathbf{J}$, we need to approximate them by smooth functions  so that we can utilize Lemma  \ref{lem: life span}. By density of smooth maps   $ L^2(\Omega,\RR^2) $ and by Lemma \ref{lem: density_Sobolev}, we assume that there exist $\{u_0^k\}_{k=1}^\infty\subseteq C^\infty(\Omega,\RR^2)\cap C^{2,\alpha}(\overline{\Omega},\RR^2)\cap \mathbf{L}$  and $\{d_0^k\}\subseteq C^\infty(\Omega,\SS^2)\cap C^{2,\alpha}(\overline{\Omega},\SS^2)\cap \mathbf{J}$ such that
    \begin{align*}
       \lim_{k\to\infty} \|u_0^k-u_0\|_{L^2(\Omega)} & = 0, \\
       \lim_{k\to\infty} \| d_0^k - d_0\|_{H^1(\Omega)} & = 0 .
    \end{align*}
    By the absolute continuity of $\int |u_0|^2 + |\nabla d_0|^2$, there exists $R_0>0$ such that 
    \[ \sup_{x\in\overline{\Omega}} \int_{\Omega\cap B_{2R_0}(x)} |u_0|^2 + |\nabla d_0|^2 \leq \frac{\epsilon_1^2}{2}, \]
    where $\epsilon_1>0$ is given in Lemma \ref{lem: life span}. By the strong convergence of $(u_0^k,\nabla d_0^k)\to(u_0,\nabla d_0)$ in $L^2(\Omega)$, it follows that 
    \[\sup_{x\in\overline{\Omega}} \int_{\Omega\cap B_{2R_0}(x)} |u_0^k|^2 + |\nabla d_0^k|^2 \leq  \epsilon_1^2 \ \text{ for } k  \text{ sufficiently large}. \]
    By discarding finitely many $k$'s and taking the rest as subsequence, we may assume without loss of generality that the above holds for $k\geq 1$. As a consequence, Lemma \ref{lem: life span} states that there exists $\theta_0=(\epsilon_1,E_0)\in(0,1)$ and life-spans $T_0^k\geq \theta_0 R_0^2$ such that each initial condition $(u_0^k,d_0^k)$ admits a  $(u^k,d^k)\in C^\infty(\Omega\times(0,T_0^k),\RR^2\times\SS^2)\cap C^{2,1}_\alpha(\overline{\Omega}\times(0,T_0^k),\RR^2\times\SS^2)$ as solution to \eqref{LCF}-\eqref{assumption1}. 
    
    Moreover, we have
    \begin{equation}\label{eqn: ukdk life_span} \sup_{(x,t)\in \overline{\Omega}_{T_0^k}} \int_{\Omega\cap B_{R_0}(x)} (|u^k|^2 + |\nabla d^k|^2) (\cdot,t) \leq 2\epsilon_1^2, \end{equation}
    and Lemma \ref{lem: global energy} gives energy estimate
    \begin{equation}\label{eqn: ukdk energy} \sup_{0\leq t\leq T_0^k} \int_\Omega (|u^k|^2 + |\nabla d^k|^2)(\cdot,t) + \int_{\Omega_{T_0^k}} (4|\DD u^k|^2 + 2|\Delta d^k + |
    \nabla d^k|^2 d^k|^2) \lesssim \int_\Omega |u_0^k|^2 + |\nabla d_0^k|^2 \lesssim E_0. \end{equation}
    The above two estimates together with Lemma \ref{Ladyzhenskaya} imply that
    \begin{align}\label{eqn: ukdkL4}
        \int_{\Omega_{T_0^k}} |u^k|^4 + |\nabla d^k|^4 & \lesssim \epsilon_1^2 E_0,   \\
        \label{eqn: dk_tD_L2}
         \int_{\Omega_{T_0^k}} |d_t^k|^2 + |\nabla^2d^k|^2 & \lesssim E_0. 
    \end{align} 
    We use Lemma \ref{lem: est pressure} and estimates \eqref{eqn: ukdk life_span}, \eqref{eqn: ukdkL4} and \eqref{eqn: dk_tD_L2} to conclude that
    \begin{equation}\label{eqn: dPkL43}
    \|\nabla P^k\|_{L^{\frac43}(\Omega_{T_0^k})} \lesssim \| \nabla u^k\|_{L^2(\Omega_{T_0^k})} \|u^k\|_{L^4(\Omega_{T_0^k})} + \|\nabla^2d^k\|_{L^2(\Omega_{T_0^k})}\|\nabla d^k\|_{L^4(\Omega_{T_0^k})} \lesssim \epsilon_1^{\frac12} E_0^{\frac34}.
    \end{equation}
    We collect estimates \eqref{eqn: ukdkL4}, \eqref{eqn: dk_tD_L2}, \eqref{eqn: dPkL43} and apply Theorem \ref{Thm A} to obtain that for any $\delta>0$,
    \begin{equation}
        \begin{split}
            \|(u^k,d^k)\|_{C^{2,1}_\alpha(\overline{\Omega}\times[\delta,T_0^k])} & \leq C\left(\delta,E_0, \|u^k\|_{L^4(\Omega_{T_0^k})}, \|\nabla d^k\|_{L^4(\Omega_{T_0^k})}, \|\nabla P^k\|_{L^{\frac43}(\Omega_{T_0^k})} \right) \\
            & \leq C(\delta,E_0, \epsilon_1).
        \end{split}
    \end{equation}
    Furthermore, for any compact subdomain $K\Subset \Omega$ and $\delta>0$,
    \[ \|(u^k,d^k)\|_{C^l(K\times[\delta, T_0^k])} \leq C(\dist(K,\partial\Omega), \delta, l, E_0). \]
    Hence, after possibly passing to subsequences, there exist $T_0\geq \theta_0R^2$, $u\in W^{1,0}_2(\Omega_{T_0},\RR^2)$, and $d\in W^{2,1}_2(\Omega_{T_0},\SS^2)$ such that 
    \begin{align*}
       & u^k \rightharpoonup u \text{ weakly in } W^{1,0}_2(\Omega_{T_0},\RR^2), \ d^k\rightharpoonup d \text{ weakly in } W^{2,1}_2(\Omega_{T_0},\SS^2), \\
      &  \lim_{k\to\infty} \|u^k-u\|_{L^4(\Omega_{T_0})} = 0, \\
      &  \lim_{k\to\infty} \|d^k-d\|_{L^4(\Omega_{T_0})}  + \|\nabla d^k-\nabla d\|_{L^2(\Omega_{T_0})} = 0,
    \end{align*}
    and for any $l\geq 2, \delta>0, \gamma<\beta$, and compact $K\Subset \Omega$,
    \begin{align*}
       &  \lim_{k\to\infty} \| (u^k,d^k)- (u,d) \|_{C^l(K\times[\delta,T_0])} = 0, \\
       & \lim_{k\to\infty} \| (u^k,d^k)-(u,d) \|_{C^{2,1}_\gamma (\overline{\Omega}\times [\delta,T_0])} = 0 .
    \end{align*}
    As a result, $(u,d)\in C^{\infty}(\Omega\times(0,T_0],\RR^2\times\SS^2)\cap C^{2,1}_\beta (\overline{\Omega}\times(0,T_0],\RR^2\times\SS^2 )$ solves \eqref{LCF}-\eqref{IC} in $\Omega\times(0,T_0]$ and satisfies the boundary condition. 
    
By $\eqref{LCF}_1$, $u^k_t\in L^2([0,T_0^k], H^{-1}(\Omega))$ and $\|u^k_t\|_{L^2([0,T_0^k], H^{-1}(\Omega))}\leq CE_0$, this together with \eqref{eqn: dk_tD_L2} states that after possibly passing to a subsequence,
    $(u,\nabla d)(\cdot,t)\rightharpoonup
    (u_0,\nabla d_0)$ weakly in $L^2(\Omega)$ as $t\to 0$. Consequently, \[ E(0) \leq \liminf_{t\to 0} E(t).\]
    On the other hand, steps in energy estimates \eqref{eqn: ukdk energy} give
    \[ E(0) \geq \limsup_{t\to 0} E(t). \]
    This implies that $(u,\nabla d)(\cdot,t)$ converges to $(u_0,\nabla d_0)$ strongly in $L^2(\Omega)$. Hence, $(u,d)$ satisfies initial condition \eqref{IC}.
    
\vspace{5mm}
\emph{Step 2: weak extension beyond singular time.}
     Let $T_1\in(T_0,\infty)$ be the first singular time of $(u,d)$, i.e.,
     \[ (u,d)\in C^\infty(\Omega\times(0,T_1),\RR^2\times\SS^2)\cap C^{2,1}_\beta (\overline{\Omega}\times(0,T_1),\RR^2\times\SS^2  ), \]
     but 
     \[ (u,d) \not\in C^\infty(\Omega\times(0,T_1],\RR^2\times\SS^2)\cap C^{2,1}_\beta (\overline{\Omega}\times(0,T_1],\RR^2\times\SS^2  ) \]
     Now, we would like to extend this weak solution in time.  To do so, we shall investigate and define new ``initial'' data at $t=T_1$.

    We claim that $(u,d)\in C^0([0,T_1], L^2(\Omega))$. Indeed, we test $\eqref{LCF}_3$ with $\phi\in H_0^2(\Omega,\RR^3)$ and obtain 
    \begin{align*} |\langle d_t,\phi\rangle| & \lesssim \|\nabla d\|_{L^2(\Omega)} \|\nabla \phi\|_{L^2(\Omega)} + (\|u\|_{L^2(\Omega)}\|\nabla d\|_{L^2(\Omega)} + \|\nabla d\|^2_{L^2(\Omega)} ) \|\phi\|_{C^0(\Omega)}  \\
    & \lesssim (\|\nabla d\|_{L^2(\Omega)}  + \|u\|_{L^2(\Omega)}\|\nabla d\|_{L^2(\Omega)} + \|\nabla d\|^2_{L^2(\Omega)} ) \|\phi\|_{H^2(\Omega)},
    \end{align*}
where we have used $\|\phi\|_{C^0(\Omega)}\lesssim \|\phi\|_{H^2(\Omega)}$ by the fact that $H^2_0(\Omega)\subseteq C^0(\Omega)$. Therefore, $d_t\in L^2([0,T_1], H^{-2}(\Omega))$, which together with the fact that $d\in L^2([0,T_1], H^1(\Omega))$, implies that $d\in C^0([0,T_1], L^2(\Omega))$. Similarly, we test $\eqref{LCF}_1$ with $\phi\in H^3_0(\Omega,\RR^2)$ and $\nabla\cdot \phi=0$ to obtain
\begin{align*}
    |\langle u_t,\phi\rangle| & \lesssim \|\nabla u\|_{L^2(\Omega)} \|\nabla\phi\|_{L^2(\Omega)} + \|u\|_{L^2(\Omega)}\|\nabla u\|_{L^2(\Omega)} \|\phi\|_{C^0(\Omega)} + \|\nabla u\|_{L^2(\Omega)}^2 \|\nabla \phi\|_{C^0(\Omega)} \\
    & \lesssim (\|\nabla u\|_{L^2(\Omega)} + \|u\|_{L^2(\Omega)}\|\nabla u\|_{L^2(\Omega)} + \|\nabla u\|^2_{L^2(\Omega)} )\|\phi\|_{H^3(\Omega)},
\end{align*}
where we have used $\|\phi\|_{C^1(\Omega)}\lesssim \|\phi\|_{H^3(\Omega)}$ by the fact that $H^3_0(\Omega)\subseteq C^1(\Omega)$. Since $\nabla\cdot u_t=0$, it follows that $u_t\in L^2([0,T_1], H^{-3}(\Omega))$. This, together with the fact that $u\in L^2([0,T_1], H^1(\Omega))$, implies that $u\in C^0([0,T_1], L^2(\Omega))$, and thus the claim is valid.

Now  $(u,d)\in C^0([0,T_1], L^2(\Omega))$  means that we can define 
\[ (u(T_1), d(T_1)) = \lim_{t\nearrow T_1} (u(t), d(t)) \text{ in } L^2(\Omega). \]
Then the energy estimate in Lemma \ref{lem: global energy} yields $\nabla d\in L^\infty([0,T_1], L^2(\Omega))$. Thus, we have the weak convergence $\nabla d(t)\rightharpoonup \nabla d(T_1)$ in $L^2(\Omega)$. In particular, $u(T_1) \in \Lbf$  and $d(T_1)\in H^1(\Omega) $. Moreover, $u(T_1)$ and $d(T_1)$ satisfy the partially free boundary condition  \eqref{PFB} since $u(t)$ and $d(t)$ satisfy it. 

Now we can use $(u(T_1), d(T_1))$ as initial data in the above procedure to obtain a weak extension of $(u,d)$  beyond $T_1$ that solves \eqref{LCF}-\eqref{assumption1}. We may be confronted with another singular time and we continue to process a weak extension in this scheme. We would like to  show that there can be at most finitely many such singular times, and we can construct an external weak solution beyond afterwards.

Notice that in the  study of heat flow of harmonic maps, each singularity carries a loss of energy. Here we will prove a similar result: at each singular time, the energy is lost by at least $\epsilon_1^2$. By definition, $T_1$ is the first singular time of $(u,d)$, then Lemma \ref{lem: life span} states that we must have
     \[ \limsup_{t\nearrow T_1} \max_{x\in \overline{\Omega}} \int_{\Omega\cap B_{2R_0(x)}} (|u|^2 + |\nabla d|^2 )(\cdot,t) > \epsilon_1^2 .\]
     This means that there exists $t_i\nearrow T_1$ and $x_0\in \overline{\Omega}$ such that for any $R>0$,
     \[\limsup_{t_i\nearrow T_1}  \int_{\Omega\cap B_{2R_0(x_0)}} (|u|^2 + |\nabla d|^2 )(\cdot,t_i) > \epsilon_1^2 . \]
    And we observe that
    \begin{align*}
        & \int_\Omega (|u|^2 + |\nabla d|^2)(\cdot,T_1) = \lim_{R\to 0} \int_{\Omega\setminus B_R(x_0)} (|u|^2 + |\nabla d|^2)(\cdot,T_1) \\
        & \leq \lim_{R\to 0}\liminf_{t_i\nearrow T_1} \int_{\Omega\setminus B_R(x_0)} (|u|^2 + |\nabla d|^2)(\cdot,t_i) \\
        & \leq 
        \lim_{R\to 0}\liminf_{t_i\nearrow T_1} \int_{\Omega} (|u|^2 + |\nabla d|^2)(\cdot,t_i) - \lim_{R\to 0}\limsup_{t_i\nearrow T_1} \int_{\Omega\cap B_R(x_0)} (|u|^2 + |\nabla d|^2)(\cdot,t_i) \\
        & \leq \liminf_{t_i\nearrow T_1} \int_\Omega (|u|^2 + |\nabla d|^2)(\cdot,t_i) - \epsilon_1^2\leq E_0-\epsilon_1^2.
    \end{align*}
     This shows that each singular time   takes away energy at least $\epsilon_1^2$, so the number of singular time is bounded by $E_0/\epsilon_1^2$. Let $0<T_L<\infty$ be the last singular time, then we must have
     \[ E(T_L) = \int_\Omega (|u|^2 + |\nabla d|^2)(\cdot,T_L) < \epsilon_1^2. \]
     Consequently, if we take $(u(T_L), d(T_L))$ as initial condition and construct a weak solution to \eqref{LCF}-\eqref{IC}, this weak extension will be an eternal weak solution.

    \vspace{5mm}
    \emph{Step 3: Uniqueness of global weak solution}.  
Uniqueness of regular solution to $\eqref{LCF}$ with Dirichlet boundary condition was shown in \cite{Lin-Wang10}, and their techniques involve proving $$\sqrt{t}(\|u(t)\|_{L^\infty(\Omega)} + \|\nabla d(t)\|_{L^\infty(\Omega)})\to 0$$
 as $t\to0$, the $L^p$--$L^q$ regularity of the heat and Stokes operators. We shall follow the same argument as in \cite{Lin-Wang10}.  Suppose that there are two weak solutions $(u_i,d_i),i=1,2$ to $\eqref{LCF}$-$\eqref{IC}$ with $\eqref{assumption1}$ such that $u_i\in L^\infty([0,T], L^2(\Omega,\RR^2))\cap L^2([0,T], H^1(\Omega,\RR^2))$ and $  d_i\in L^\infty([0,T], H^1(\Omega,\SS^2))\cap L^2([0,T], H^2(\Omega,\SS^2))$. 

First we claim that $A_i(t)= \sup_{0<s<t} \sqrt{s}(\|u_i(s)\|_{L^\infty(\Omega)}  + \|\nabla d_i(s)\|_{L^\infty(\Omega)})<+\infty$. For $(x_0,t_0)\in\Omega\times(0,T)$, we  take $0<\tau \leq \sqrt{t_0}$.  Since $(u,d)$ solves $\eqref{LCF}$ on $\Omega\times[0,T]$, by a scaling argument, taking $(v,g)(y,s)= (\tau u,d)(x+\tau y, \tau^2 + \tau^2 s)$ for $(y,s)\in P'= [-1,0]\times \frac{1}{\tau}\Omega$, one has $(v,g)$ solves $\eqref{LCF}$ on $P'$.  Thus one can apply Theorem \ref{Thm A} to conclude that $(v,g)\in C^\infty(P')\cap C^{2,1}_\alpha(\overline{P'})$.  This means $\|v\|_{L^\infty(P')} + \|\nabla g\|_{L^\infty(P')} \leq C(u,d) < +\infty$,  where $C(u,d)$ is independent of the scaling size  $\tau$. Back to the original scales, we obtain 
\[ \sup_{0<\tau\leq \sqrt{t_0}}\tau (\|u(\tau^2)\|_{L^\infty(\Omega)} + \|\nabla d(\tau^2)\|_{L^\infty(\Omega)}) < +\infty . \]

Next, we claim that $A_i(t)= \sup_{0<s<t} \sqrt{s}(\|u_i(s)\|_{L^\infty(\Omega)}  + \|\nabla d_i(s)\|_{L^\infty(\Omega)}) \to 0$ as $t\to 0$. 
From the global energy equality in Lemma \ref{lem: global energy}, we see that the energy $E_i(t)=\int_\Omega |u_i(t)|^2 + |\nabla d_i(t)|^2 $ is monotone decreasing with respect to $t\geq 0$. As a consequence, 
\[ \lim_{t\to 0} E_i(t) \leq E(0) = \int_\Omega |u_0|^2 + |\nabla d_0|^2. \]
On the other hand, since $(u_i(t), \nabla d_i(t))$ converges weakly to $(u_0,\nabla d_0)$ in $L^2(\Omega)$ as $t\to 0$, lower semi-continuity also gives $\lim_{t\to 0}E_i(t)\geq E_0$, and hence 
\[ \lim_{t\to 0} E_i(t) = E_0. \] So we have $E_i(t)\in C([0,T])$.  
It then follows that 
\[ \lim_{t\to 0} \sup_{x\in\overline{\Omega}} \int_{B_t(x)\times [0,t^2]} \sum_{i=1}^2 (|u_i|^2 + |\nabla d_i|^2) = 0, \]
and by the smoothness of $u_i$ and $\nabla d_i$, we have
\[ \lim_{t\to 0}\sup_{0<s<t} \sqrt{s}(\|u_i(s)\|_{L^\infty(\Omega)} + \|\nabla d_i(s)\|_{L^\infty(\Omega)} ) = 0. \]

Finally, we claim that $\lim_{t\to0}A_i(t)=0$ for $i=1,2$ implies that $(u_1,d_1)\equiv (u_2,d_2)$ on some $\Omega\times[0,t_0]$.  Let $\tilde{u}=u_1-u_2$ and $\tilde{d}=d_1-d_2$, then we take the difference of the corresponding equations and obtain
\begin{equation}\label{eqn: difference}
    \begin{split}
        \partial_t \tilde{u} - \bbA \tilde{u} & = - \PP \nabla\cdot(\tilde{u}\otimes u_1 + u_2\otimes \tilde{u} + \nabla \tilde{d}\otimes \nabla d_1 +\nabla d_2\otimes \nabla \tilde{d} ) \\
        \nabla\cdot \tilde{u} & = 0 \\
        \partial_t \tilde{d} - \Delta \tilde{d} & = [(\nabla d_1 + \nabla d_2)\cdot \nabla \tilde{d} d_1 + |\nabla d_2|^2 \tilde{d}] - [\tilde{u}\cdot\nabla d_1 + u_2\cdot\nabla \tilde{d}] \\
        (\tilde{u},\tilde{d})|_{t=0} & = 0\\
        (\tilde{u},\tilde{d}) & \text{ satisfies } \eqref{PFB}.
    \end{split}
\end{equation}

Now for $\delta\in(0,1)$ we define
\[  D_\delta(t) = t^{\frac{1-\delta}{2}} (\|u_1(t)\|_{L^{2/\delta}(\Omega)} +  \|u_2(t)\|_{L^{2/\delta}(\Omega)} + \|\nabla d_1(t)\|_{L^{2/\delta}(\Omega)} + \|\nabla d_2(t)\|_{L^{2/\delta}(\Omega)}  ) .\]
By the interpolation inequality, we obtain
\[ D_\delta(t) \leq (E_1(t) + E_2(t))^\delta (A_1(t) + A_2(t))^{1-\delta}. \]
Recall that by Duhamel's formula, we have
\begin{equation*}
    \begin{split}
        \tilde{u}(t) & = -\int_{0}^t e^{-(t-s)\bbA} \PP \nabla \cdot (\tilde{u}\otimes u_1 + u_2\otimes \tilde{u} + \nabla\tilde{d} \otimes \nabla d_1 + \nabla d_2 \otimes \nabla \tilde{d}  )(s), \\
        \tilde{d}(t) & =\int_0^t e^{-(t-s)\Delta} [(\nabla d_1 + \nabla d_2)\cdot \nabla \tilde{d} d_1 + |\nabla d_2|^2\tilde{d} - \tilde{u}\cdot\nabla d_1 - u_2\cdot\nabla\tilde{d} ](s).
    \end{split}
\end{equation*}
Thus we apply Lemma \ref{lem: LpLq reg Heat} with $q=2/\delta$ and $p=1$, together with H\"older inequality, and obtain 

 \begin{equation*}
     \begin{split}
         \|\tilde{d}(t)\|_{L^{2/\delta}(\Omega)} & \lesssim \int_0^t (t-s)^{-\frac{2-\delta}{2}} \bigg[ \sum_{i=1}^2 \|\nabla d_i(s)\|_{L^2(\Omega)} + \|u_i(s)\|_{L^2(\Omega)} \bigg]^2 ds   \\
         & \lesssim \bigg( \int_0^t (t-s)^{-\frac{2-\delta}{2}}ds \bigg) \sup_{0\leq s\leq t} (E_1+E_2)(s) \lesssim t^{\delta/2} \sup_{0\leq s\leq t} (E_1+E_2)(s).
     \end{split}
 \end{equation*}
Moreover, we can apply Lemma \ref{lem: LpLq reg Heat} with $q=2/\delta$ and $p=2/(\delta + 1)$, together with the H\"older inequality, to obtain
\begin{equation}\label{eqn: til_d_est}
    \begin{split}
       &  \| \tilde{d} (t) \|_{L^{2/\delta} (\Omega)}
       \\
       & \lesssim \int_0^t (t-s)^{-1/2} \Big[ \sum_{i=1}^2 \|\nabla d_i(s) \|_{L^{2/\delta}(\Omega)}  + \| u_i(s) \|_{L^{2/\delta}(\Omega)} \Big]\Big( \|\nabla \tilde{d}(s)\|_{L^2(\Omega)} + \|\tilde{u}\|_{L^2(\Omega)} \Big) 
        \\
        & + \int_{0}^t  (t-s)^{-1/2} \|\nabla d_2\|_{L^\infty(\Omega)}   \|\nabla d_2\|_{L^2(\Omega)} \|\tilde{d}(s)\|_{L^{2/\delta}(\Omega)} 
        \\
        & \lesssim \Big(  \int_0^t (t-s)^{-1/2}s^{(\delta-1)/2}  ds \Big) \Big(  \sup_{0<s\leq t} D_\delta(s)   \Big) \Big(  \sup_{0\leq s\leq t } \|\nabla \tilde{d} (s)\|_{L^2(\Omega)} + \|\tilde{u}(s)\|_{L^2(\Omega)}   \Big)   \\
        & + \Big(  \int_0^t (t-s)^{-1/2}s^{(\delta-1)/2}  ds \Big)   \Big(  \sup_{0<s\leq t} A_2(s)   \Big) \Big(  \sup_{0\leq s\leq t} (E_1+E_2)(s)  \Big) \Big(  \sup_{0<s\leq t}  s^{-\delta/2} \|\tilde{d}(s)\|_{L^{2/\delta}(\Omega)}  \Big) 
        \\
        & \lesssim t^{\delta/2} \Big[ \sup_{0<s\leq t} D_\delta(s)  + \big( \sup_{0<s\leq t} A_2(s) \big) \big( \sup_{0<s\leq t} (E_1+E_2)(s)  \big)  \Big] 
        \\
        &\quad \times \sup_{0<s\leq t}\Big[  \|\nabla \tilde{d}(s)\|_{L^2(\Omega)}  + \|\tilde{u}(s)\|_{L^2(\Omega)} + s^{-\delta/2} \|\tilde{d}(s)\|_{L^{2/\delta}(\Omega)}  \Big],
    \end{split}
\end{equation}
which converges to $0$ as $t\to 0$.
Also, we apply Lemma \ref{lem: LpLq reg Heat} with $q=2$ and $p=2/(\delta+1)$, together with H\"older inequality, to obtain
\begin{equation}\label{eqn: til_Dd_est}
    \begin{split}
        & \|\nabla \tilde{d} (t)\|_{L^{2/\delta}(\Omega)}
        \\
        & \lesssim \int_0^t (t-s)^{-(1+\delta)/2} \Big(  \|\nabla d_1(s)\|_{L^{2/\delta}(\Omega)} + \|\nabla d_2(s)\|_{L^{2/\delta}(\Omega)} \Big)  \|\nabla \tilde{d}(s)\|_{L^2(\Omega)} ds  \\
        & + \int_0^t (t-s)^{-(1+\delta)/2}  \Big(  \|\nabla d_1(s)\|_{L^{2/\delta}(\Omega)} \|\tilde{u}(s)\|_{L^2(\Omega)} + \|u_2(s)\|_{L^{2/\delta}(\Omega)} \|\nabla \tilde{d}(s)\|_{L^2(\Omega)}  \Big) ds 
        \\
        & + \int_0^t (t-s)^{-(1+\delta)/2} \|\nabla d_2(s)\|_{L^\infty(\Omega)} \|\nabla d_2(s)\|_{L^2(\Omega)} \|\tilde{d}(s)\|_{L^{2/\delta}(\Omega)} ds 
        \\
        & \lesssim  \Big(  \int_0^t (t-s)^{-(1+\delta)/2} s^{(\delta-1)/2} ds \Big) \Big(  \sup_{0<s\leq t} D_\delta(s)  \Big) \Big( \sup_{0\leq s\leq t} (\|\tilde{u}(s)\|_{L^2(\Omega)} + \|\nabla \tilde{d}(s)\|_{L^2(\Omega)})  \Big) \\
        & +  \Big(  \int_0^t (t-s)^{-(1+\delta)/2} s^{(\delta-1)/2} 
        ds \Big) \Big(  \sup_{0<s\leq t} A_2(s) \Big) \Big( \sup_{0\leq s\leq t} (E_1+E_2)(s) \Big)\Big(  \sup_{0<s\leq t} s^{-\delta/2}\|\tilde{d}(s)\|_{L^{2/\delta}(\Omega)} \Big)
        \\
        &\leq  C\Big(  \sup_{0<s\leq t} D_\delta(s)  \Big) \Big( \sup_{0\leq s\leq t} (\|\tilde{u}(s)\|_{L^2(\Omega)} + \|\nabla \tilde{d}(s)\|_{L^2(\Omega)})  \Big) 
        \\
        & + C\Big(  \sup_{0<s\leq t} A_2(s) \Big) \Big( \sup_{0\leq s\leq t} (E_1+E_2)(s) \Big)\Big(  \sup_{0<s\leq t} s^{-\delta/2}\|\tilde{d}(s)\|_{L^{2/\delta}(\Omega)} \Big),
    \end{split}
\end{equation}
where we have used the fact that $\int_0^t (t-s)^{-(\delta+1)/2}s^{(\delta-1)/2} ds = \int_0^1 (1-s)^{-(1+\delta)/2}s^{(\delta-1)/2} < +\infty$, and likewise we conclude that it converges to 0 as $t\to 0$.

Now we apply Lemma \ref{lem: LpLq reg Stokes} with $2>q=\frac{2}{1+\delta}\frac{2}{2-\delta}> p = \frac{2}{1+\delta}$, $\gamma=1-\frac{p}{q}=\frac{\delta}{2}$, and obtain
\begin{equation}\label{eqn: til_u_est}
    \begin{split}
       & \|\tilde{u}\|_{L^{\frac{2}{1+\delta}\frac{2}{2-\delta}}(\Omega)}  
       \\
       & \lesssim \int_0^t (t-s)^{-(\delta+1)/2} \Big( \|u_1(s)\|_{L^{2/\delta}(\Omega)} + \|u_2(s)\|_{L^{2/\delta}(\Omega)} \Big)\|\tilde{u}(s)\|_{L^2(\Omega)} ds 
       \\
       & + \int_0^t (t-s)^{-(\delta+1)/2}  \Big( \|\nabla d_1(s)\|_{L^{2/\delta}(\Omega)} + \|\nabla d_2(s)\|_{L^{2/\delta}(\Omega)} \Big)\|\nabla \tilde{d}(s)\|_{L^2(\Omega)} ds 
       \\
       & \lesssim \Big( \int_0^t  (t-s)^{-(\delta+1)/2}s^{(\delta-1)/2} ds\Big) \Big( \sup_{0<s\leq t} D_\delta(s) \Big)\Big( \sup_{0\leq s\leq t} (\|\tilde{u}(s)\|_{L^2(\Omega)} + \|\nabla \tilde{d}\|_{L^2(\Omega)} )  \Big)
       \\
       & \leq C\Big(  \sup_{0<s\leq t} D_\delta(s)  \Big) \Big( \sup_{0\leq s\leq t} (\|\tilde{u}(s)\|_{L^q(\Omega)} + \|\nabla \tilde{d}(s)\|_{L^2(\Omega)})  \Big) .
    \end{split}
\end{equation}
Finally, for $0<t\leq t_0$, we define
\[ \Phi(t) = \sup_{0<s\leq t} \Big( \|\nabla\tilde{d}(s)\|_{L^2(\Omega)} + s^{-\frac{\delta}{2}}\|\tilde{d}(s)\|_{L^{\frac{2}{\delta}}(\Omega)} + \|\tilde{u}(s)\|_{L^{\frac{2}{1+\delta}\frac{2}{2-\delta}}(\Omega)} \Big).   \]
From estimates \eqref{eqn: til_d_est}, \eqref{eqn: til_Dd_est}, \eqref{eqn: til_u_est}, and the H\"older inequality $\|\cdot\|_{L^2(\Omega)}\lesssim \|\cdot\|_{L^{\frac{2}{1+\delta} \frac{2}{2-\delta}}(\Omega)}$, we have
\[ \Phi(t) \leq C\Big[ \sup_{0<s\leq t} D_\delta(s)  + \Big(\sup_{0<s\leq t} (A_1+A_2)(s) \Big) \Big(\sup_{0\leq s\leq t} (E_1+E_2)(s) \Big)\Big]\Phi(t) \leq  \frac{1}{2}\Phi(t)\]
as long as $t_0>0$ is sufficiently small such that 
\[ C \Big[ \sup_{0<s\leq t} D_\delta(s)  + \Big(\sup_{0<s\leq t} (A_1+A_2)(s) \Big) \Big(\sup_{0\leq s\leq t} (E_1+E_2)(s) \Big)\Big] \leq C(\epsilon^{1-\delta} + \epsilon) \leq \frac{1}{2} . \]
As a consequence, $\Phi(t)\equiv 0$ on $(0,t_0]$ and hence $(u_1,d_1)\equiv (u_2,d_2)$ on $\Omega\times(0,t_0]$. Furthermore, this implies that our choice for weak extension beyond singular time is also unique.

    \vspace{5mm}
     \emph{Step 4: Blow-up analysis}. We have established (1), (2), (4), and the first half of (3) in Theorem \ref{Thm B}.  It remains to carry out the blow-up analysis at each singular time. There exists $0<t_0<T_1, t_m\nearrow T_1, r_m\searrow 0$ such that
     \begin{equation}\label{eqn: BU concentration}
         \epsilon_1^2 = \sup_{x\in\overline{\Omega}, t_0\leq t\leq t_m} \int_{\Omega\cap B_{r_m}(x)} (|u|^2 + |\nabla d|^2),
     \end{equation}
     and we use Lemma \ref{lem: life span} in the opposite way to obtain $\{x_m\}_{m=1}^\infty\subseteq \Omega$ such that
     \begin{equation}\label{eqn: BU initial} \int_{\Omega\cap B_{2r_m}(x_m)} (|u|^2 + |\nabla d|^2)(\cdot, t_m-\theta_0 r_m^2) \geq \frac12 \max_{x\in\overline{\Omega}} \int_{\Omega\cap B_{2r_m}(x)} (|u|^2 + |\nabla d|^2)(\cdot, t_m-\theta_0 r_m^2) \geq \frac12\epsilon_1^2. \end{equation}
     Energy estimate in Lemma \ref{lem: global energy}, \eqref{eqn: BU concentration}, and the Ladyzhenskaya's inequality state that
     \begin{equation}\label{eqn: BU global}
         \int_{\Omega\times[t_0,t_1]} (|u|^4 + |\nabla d|^4) \leq C(\epsilon_1, E_0).
     \end{equation}

     Denote $\Omega_m = r_m^{-1} (\Omega\setminus\{x_m\})$. Define the blow-up sequence $(u_m,d_m): \Omega_m\times[\frac{t_0-t_m}{r_m^2},0]$ by
     \[ u_m(x,t) = r_m u(x_m + r_m x, t_m + r_m^2t),\quad d_m(x,t)= d(x_m + r_mx, t_m + r_m^2t). \]
    It follows that $(u_m,d_m)$ solves \eqref{LCF} on $\Omega_m\times[\frac{t_0-t_m}{r_m^2},0]$, and \eqref{eqn: BU concentration} \eqref{eqn: BU initial} and \eqref{eqn: BU global} give us the following
    \begin{align*}
       &  \int_{\Omega_m\cap B_2(0)} (|u_m|^2 + |\nabla d_m|^2)(-\theta_0) \geq \frac12\epsilon_1^2 ,\\
       &  \int_{\Omega_m\cap B_1(x)} (|u_m|^2 + |\nabla d_m|^2)(t) \leq \epsilon_1^2, \  \forall x\in\Omega_m,\frac{t_0-t_m}{r_m^2}\leq t\leq 0,\\
       & \int_{\Omega_m\times[-\frac{t_0-t_m}{r_m^2},0]} |u_m|^4  + |\nabla d_m|^4 \leq C(\epsilon_1, E_0).
    \end{align*}

     By possibly passing to subsequences, we may assume without loss of generality that $x_m\to x_0\in\overline{\Omega}$ for some $x_0\in\overline{\Omega}$.

     \emph{Case 1: $x_0\in\Omega$}. Then we can assume $r_0<\dist(x_0,\partial\Omega)$ and $\Omega_m\to\RR^2$. Also we have $\frac{t_0-t_m}{r_0^2}\to-\infty$. Consequently, regularity result in Theorem \ref{Thm A} states that there exists a smooth solution $(u_\infty',d_\infty'): \RR^2\times (-\infty,0]\to\RR^2\times\SS^2$ such that it solves \eqref{LCF} and 
     \[ (u_m,d_m)\to (u_\infty',d_\infty') \ \text{ in } C^2_{{\rm loc}}(\RR^2\times [-\infty,0]).\]
     Because of the regularity of 2D Navier-Stokes equation and the phenomenon of separation of sphere in the heat flow of harmonic map, we would like to show that the singularity is attributed to $\nabla d$. First, we want to show that $u_\infty'\equiv0$. Indeed, take any parabolic cylinder $P_R\subseteq \RR^2\times[-\infty,0]$, since $u\in L^4(\Omega\times[0,T_1])$, we have 
     \[ \int_{P_R} |u_\infty'|^4 = \lim_{m\to\infty} \int_{P_R} |u_m|^4 = \lim_{m\to\infty} \int_{B_{Rr_m}(x_m)}\int_{[t_m-R^2r_m^2, t_m]} |u|^4 = 0.  \]
    Next, we claim that $d_\infty'$ is a nontrivial and smooth harmonic map with finite energy. In fact, since $\Delta d + |\nabla d|^2 d \in L^2(\Omega\times[0,T_1])$, we have, for any compact $K\subseteq \RR^2$, 
    \begin{align*} \int_{-2\theta_0}^0 \int_K |\Delta d_\infty' + |\nabla d_\infty'|^2 d_\infty'|^2 \leq \liminf_{m} \int_{-2\theta_0}^0\int_{\Omega_m} |\Delta d_m + |\nabla d_m|^2 d_m|^2 \\
    = \lim_{m\to\infty} \int_{t_m-2\theta_0r_m^2}^{t_m} \int_\Omega |\Delta d + |\nabla d|^2 d|^2 = 0 .
    \end{align*}
    This means $\partial_t d_\infty' + u\cdot\nabla d_\infty' =0$ on $\RR^2\times[-2\theta_0,0]$. Hence $\partial_t d_m=u_m=0$ and $d_\infty\in C^2(\RR^2,\SS^2)$ is a harmonic map. Also notice that
    \[ \int_{B_2 } |\nabla d_\infty'|^2 = \lim_{m\to\infty} \int_{B_2} (|u_m|^2 + |\nabla d_m|^2)(\cdot,-\theta_0) \geq \frac{\epsilon_1^2}{4},  \]
    and thus $d_\infty'$ is a nontrivial map. By the lower semi-continuity, for any $B_R\subseteq \RR^2$, 
    \[ \int_{B_R} |\nabla d_\infty'|^2 \leq \liminf_{m\to\infty} \int_{B_R} |\nabla d_m|^2(\cdot,-\theta_0) = \liminf_{m\to\infty} \int_{B_{r_mR(x_m)}} |\nabla d|^2(t_m-\theta_0r_m^2) \leq E_0 .\]
    This implies that $d_\infty'$ has finite energy. Studies of harmonic maps (see for instance \cite{Struwe85} and \cite{SU81}) show that $d_\infty'$ can be lifted to be a
    non-constant harmonic map from $\SS^2$ to $\SS^2$. In particular, the degree of $d_\infty'$ is nonzero and 
    \[ \int_{\RR^2} |\nabla d_\infty'|^2 \geq 8\pi|\deg(d_\infty')|\geq 8\pi .\]

    \emph{Case 2: $x_0\in\partial\Omega$.} If further $\lim_{m\to\infty} \frac{|x_m-x_0|}{r_m}=\infty$, then $\Omega_m\to\RR^2$. Then the same reasoning in Case 1 shows that $(u_m,d_m)\to(0,d_\infty')$ in $C^2_{{\rm loc}}(\RR^2)$, and $d_\infty'\in C^\infty(\RR^2,\SS^2)$ is a nontrivial harmonic map with finite energy. The other situation $\lim_{m\to\infty}\frac{|x_m-x_0|}{r_m}<\infty$ implies that $\Omega_m$ converges to a half plane and it will give  singularity at boundary: assume without loss of generality that $\frac{x_m-x_0}{r_m}\to (0,0)\in\RR^2$  and $\Omega_m\to \RR^2_+= \{(x_1,x_2): x_2> 0\} $. Because $d_m(x)= d(x_m+r_mx)$ for $x\in\partial\Omega_m$, we can show similarly that $(u_m,d_m)\to(0,d_\infty')$ in $C^2_{{\rm loc}}(\RR^2_+)$, where $d_\infty':\RR^2_+\to\SS^2$ is a nontrivial harmonic map with finite energy and $d_\infty'$ satisfies $\eqref{PFB}_2$ at $\partial\RR^2_+=\{(x_1,x_2): x_2=0\}$. The reflection given in Appendix \ref{appendix: reflection} allows us to use reflection to extend $d_\infty'$ to be a nontrivial harmonic map on the whole space $\RR^2$. Moreover, the reflection symmetry directly states that we have half energy
    \[ \int_{\RR^2_a} |\nabla d_\infty'|^2 = \frac12 \int_{\RR^2} |\nabla d_\infty'|^2 \geq 4\pi |\deg(d_\infty')| \geq 4\pi. \]
\end{proof}

\bigskip

\section{Eternal behavior: proof of Theorem \ref{Thm C} and Theorem \ref{Thm D}}\label{sec-7}

\begin{proof}[Proof of Theorem \ref{Thm C}]
     
    To prove (1), Lemma \ref{lem: global energy} states that there exists $t_k\to\infty$ such that for $(u_k,d_k)=(u(\cdot,t_k),d(\cdot,t_k))$
    \begin{align*}
       &  \int_\Omega |u_k|^2 + |\nabla d_k|^2 \leq E_0, \\
       &  \lim_{k\to\infty} \int_\Omega |\DD u_k|^2 + |\Delta d_k + |\nabla d_k|^2 d_k|^2 = 0.
    \end{align*}
    As a consequence, we have $\int_\Omega |\DD u_k|^2\to0$ as $k\to\infty$, and thus by weak limit there exists $t_k\to\infty$ such that $u_k\weakto u_\infty$ in $H^1(\Omega)$  such that $\DD u_\infty\equiv 0$.  In the situation of non-axisymmetric domain $\Omega$, we use the Korn's inequality in Lemma \ref{lem: Korn} to deduce that  $\nabla u_\infty\equiv0$, which together with Poincar\'e inequality for $H^1$-vector field with tangential boundary condition, i.e. $v\cdot\n = 0$ on $\partial\Omega$, implies that $u_\infty\equiv0$ in $H^1(\Omega)$. Meanwhile,  
     $\{d_k\}_{k=1}^\infty\subseteq H^1(\Omega,\SS^2)$ is a bounded sequence of approximated harmonic maps from $\Omega$ to $\SS^2$. Also, $\{d_k\}_{k=1}^\infty$ satisfies partially boundary condition $\eqref{PFB}_2$, and the tension field $\Delta d_k + |\nabla d_k|^2d_k$ converges to $0$ in $L^2(\Omega)$. By the energy identity result by Qing  \cite{Qing95} and Lin-Wang \cite{LW95},
we can conclude that there exists a harmonic map $d_\infty \in C^{2,\beta}(\Omega,\SS^2)$ with $d_\infty$ satisfying $\eqref{PFB}_2$, and there exist finitely many interior points $\{x_i\}_{i=1}^l\subseteq\Omega$ and boundary points $\{y_i\}_{i=1}^{l'}\subseteq\partial\Omega$ such that
\[ |\nabla d_k|^2 dx \rightharpoonup |\nabla d_\infty| dx + \sum_{i=1}^l 8\pi m_i \delta_{x_i} + \sum_{i=1}^l 4\pi m_i' \delta_{y_i} \]
for some subsequence $\{m_i\}_{i=1}^l,~\{m_i'\}_{i=1}^{l'}\subseteq \NN$. Note that this $d_\infty:\Omega\to\SS^2$ has different meaning from $d_\infty:\RR^2 (\text{or } \RR^2_a) \to \SS^2$ which is used in the blow-up analysis for Theorem \ref{Thm B}.

 In the end, it remains to prove (2). We first observe that the energy is not sufficient to evolve  finite time singularities. Suppose for the purpose of contradiction that  it blows up near the first singular time $T_1$. Then (3) in Theorem \ref{Thm B} implies that there exists a nontrivial harmonic map $w\in C^\infty(\RR^2,\SS^2)$ and 
\begin{equation}\label{eqn: near singular time} 8\pi\leq \int_{\RR^2}|\nabla w|^2 \leq 2\lim_{t\nearrow T_1} \int_\Omega (|u|^2 + |\nabla d|^2)(\cdot,t) \leq 2\int_\Omega |u_0|^2 + |\nabla d_0|^2 \leq 8\pi ,\end{equation}
where the factor 2 includes the possibility that we have singularity on the boundary. 
Thus there is no loss of energy, but then Lemma \ref{lem: global energy} implies that 
\begin{equation}\label{eqn: no singularity}
\int_0^{T_1}\int_\Omega |\DD u|^2 + |\Delta d + |\nabla d|^2d|^2 = 0, \end{equation}
and hence $\DD u=d_t\equiv0$ in $\Omega_{T_1}$. Therefore, $d(\cdot,t)=d_0\in C^{2,\beta}(\Omega,\SS^2),~ 0\leq t\leq T_1$, is a harmonic map, which contradicts the assumption that  $T_1$ is a singular
time.

Moreover, we would like to show that there is no blow-up of $\phi(t)=\max_{x\in\overline{\Omega},\tau\leq t} (|u|+ |\nabla d|)(x,\tau)$ at infinity. Suppose for the purpose of contradiction that there exists $t_k\to\infty$ and $x_k\in\overline{\Omega}$, such that
\[ \lambda_k = \phi(t_k) = (|u|+ |\nabla d|)(x_k,t_k) \to \infty.\]
Define $\Omega_k = \lambda_k(\Omega\setminus\{x_k\})$ and $(u_k,d_k):\Omega_k \times [-t_k\lambda_k^2,0]\to\RR^2\times\SS^2$ by
\[ u_k(x,t) = \frac1{\lambda_k} u\left(x_k+ \frac{x}{\lambda_k}, t_k + \frac{t}{\lambda_k^2}\right), \quad d_k(x,t) = d\left(x_k+\frac{x}{\lambda_k}, t_k + \frac{t}{\lambda_k^2}\right). \]
It follows that $(u_k,d_k)$ solves \eqref{LCF} on $\Omega_k\times[-t_k\lambda_k^2,0]$ and 
\[ 1= (|u_k|+ |\nabla d_k|)(0,0) \geq (|u_k|+ |\nabla d_k|)(x,t), \ \  \ \forall (x,t)\in \Omega_k\times[-t_k\lambda_k^2,0].  \]
With the same procedure as in Theorem \ref{Thm B} (3), we  conclude that there are two cases: either (i) $\Omega_k\to\RR^2$ and $(u_k,d_k)\to(0,w')$ in $C^2_{{\rm loc}}(\RR^2)$ where $w'\in C^\infty(\RR^2,\SS^2)$ is a nontrivial harmonic map with finite energy, or (ii) $\Omega_k\to\RR^2_a$ for some half plane $\RR_a:=\{a_1x_1+a_2x_2>a_0\}$ and $(u_k,d_k)\to(0,w')$ in $C^2_{{\rm loc}}(\RR^2_a)$ where $w': \RR_a^2 \to \SS^2$ is a nontrivial harmonic map with finite energy and satisfies $\eqref{PFB}_2$ at $\partial\RR_a^2$. Again, reflection symmetry in Appendix \ref{appendix: reflection} allows us to extend $w'$ to be a nontrivial harmonic map on the whole space $\RR^2$.

For case (i) and (ii), we perform the same scheme as in \eqref{eqn: near singular time}  and \eqref{eqn: no singularity} (with $T_1$ replaced by $\infty$), to conclude that
\[ \int_0^\infty \int_\Omega |\DD u|^2 + |\Delta d+ |\nabla d |^2d|^2 =0, \]
and the same reasoning yields $\DD u=d_t\equiv0$ on $\Omega\times[0,\infty)$. So $d(t)=d_0 \in C^{2,\beta}(\Omega,\SS^2), ~0\leq t<\infty$, is a harmonic map. This implies that $\phi(t)$ is constant and yields contradiction to the initial assumption that $\phi(t_k)\to\infty$.

We have shown that $\phi(t)$ is bounded on $t\in(0,\infty)$, and thus regularity results in Theorem \ref{Thm A} give that $\|u(\cdot,t)\|_{C^{2,\beta}}(\Omega),\|d(\cdot,t)\|_{C^{2,\beta}}(\Omega)$ stays bounded on $t\in(0,\infty)$. It follows that there exists a sequence $t_k\to\infty$ such that
\begin{align*}
    & \int_\Omega (|u|^2 + |\nabla d|^2)(x,t_k) \leq E_0 ,\\
    & \int_{\Omega} (|\DD u|^2 + |\Delta d + |\nabla d|^2d|)(x,t_k)\to 0 ,\\
    & \|u(\cdot,t_k)\|_{C^{2,\beta}(\Omega)} + \|d(\cdot,t_k)\|_{C^{2,\beta}}(\Omega) \leq C ,\\
    & (u(\cdot,t_k), d(\cdot,t_k)) \to (u_\infty,d_\infty) \ \text{ in } C^2(\overline{\Omega}, \RR^2\times\SS^2)
\end{align*}
for some $(u_\infty, d_\infty) \in C^{2,\beta}(\overline{\Omega},\RR^2) \times C^{2,\beta}(\overline{\Omega},\SS^2)$ satisfying partially free boundary condition $\eqref{PFB}$ on $\partial\Omega$.
\end{proof}

\medskip

\begin{proof}[Proof of Theorem \ref{Thm D}]
Since the domain $\Omega$ is axisymmetric, then it is either a disk or an annulus. From $\DD u_\infty\equiv0\in L^2(\Omega)$ one can see immediately that $u_\infty$ is a vortex flow of the form $u_\infty = c(x_2,-x_1)$, and it solves the  Navier Stokes equation $\eqref{LCF}_1$. Moreover we have $u_\infty\cdot\nabla d_\infty=0$. 

Case 1: If $c=0$, then  we have $u_\infty=0$ and $d_\infty$ is a harmonic map on $\Omega$.

Case 2: If $c\neq 0$ and $\Omega$ is a disk, since vortex flow $u_\infty \perp \hat{r}$ almost everywhere, then we know that each entry of $d_\infty$ is radial. We can show that $d$ is a constant harmonic map by ODE. Indeed, writing harmonic map equation in polar coordinates, we have
\[ \partial_{rr}d^i + \frac{1}{r}\partial_rd^i + |\nabla d|^2 d^i = 0, \]
and thus
\[  \sum_i \partial_{rr}d^i\partial_rd^i + \frac1r |\partial_r d^i|^2 + |\nabla d|^2 d^i\partial_rd^i = 0,
\]
where the last term $\sum_i d^i\partial_r d^i = 0$ because $|d|^2\equiv 1$. By taking $f(r)=|\nabla d|^2$, we have
\[ \partial_r f + \frac{2}{r}f=0, \]
and thus $f=\frac{\alpha^2}{r^2}$. We have harmonic map $d_\infty$ being smooth so that $f(0)<\infty$ and thus conclude that $f\equiv 0$.

Case 3: If $c\neq 0$ and $\Omega$ is an annulus. Then as above $f=|\nabla d|^2=\frac{\alpha^2}{r^2}$, but here $\alpha$ can be nonzero, so we have
\[ \partial_{rr}d^i + \frac{1}{r} \partial_r d^i + \frac{\alpha^2}{r^2}d^i = 0 . \]
We change the variable $r=e^s$. Then $\frac{d}{dr} = r^{-1} \frac{d}{ds}$ and $\frac{d^2}{dr^2}=r^{-2} (\frac{d}{ds}-1)\frac{d}{ds}$, which gives
\[  \partial_{ss}d^i + \alpha^2 d^i =0, \]
and hence
\[ d^i = A_i\sin(\alpha s +\phi_i ) = A_i \sin(\alpha \ln(r) + \phi_i )\]
for some constant $\phi_i$. And the partially free boundary conditions $\eqref{PFB}_2$ imply $\eqref{eqn: annulus_info}$.
\end{proof}

\bigskip

\appendix
\section{Boundary condition and the basic energy law}\label{appendix: reflection}

\medskip

We would like to show that the free boundary condition \eqref{PFB} is compatible with the basic energy law for the system \eqref{LCF}. 

We repeat the process in Lemma \ref{lem: global energy} and  obtain the basic energy law:
\begin{equation}\label{app: basic energy law}
    \frac12\frac{d}{dt}\Big(\int_\Omega |u|^2 + |\nabla d|^2  \Big) = - \int_\Omega \frac12|\nabla u + (\nabla u)^T|^2 -\int_\Omega \big| \Delta d+ |\nabla d|^2d \big|^2,
\end{equation}
which describes the property of  energy dissipation for the flow of liquid crystals.

Note that the system \eqref{LCF} has stress tensor 
\[S = \frac12(\nabla u + (\nabla u)^T) - P\Id_2 + \nabla d\odot\nabla d -\frac12|\nabla d|^2\Id_2,\]
so the physical compatibility condition requires $(S\cdot\n)_\tau =0$. Considering $\n\cdot\tau=0$ and the Navier perfect-slip boundary condition $\eqref{PFB}_2$, we have 
\[ 0 = \langle (\nabla d\odot \nabla d)\n,\tau  \rangle = \langle \nabla_\n d,\nabla_\tau d \rangle, \]
which gives the free boundary condition $\eqref{PFB}_2: \nabla_\n d\perp T_d\Sigma. $

In addition, it it worthy mentioning that in the case of half plane $\Omega=\RR^2_+$, the free boundary condition \eqref{PFB} gives a reflection across $\partial\RR^2_+$. First, free boundary condition \eqref{PFB} has a simple form in such case:
\[ \begin{cases}
    \pars u_1 = u_2 = 0 \\
    \pars d_1 = \pars d_2 = d_3 = 0
\end{cases} \ \text{ on } \partial\RR^2_+. \]
By performing even reflection for $u_1,d_1,d_2$ and odd reflection for $u_2,d_3$, we can use this reflection symmetry to extend our solution $(u,d)$ to the whole domain $\RR^2$. Explicitly,
\[ \tilde{u}(x_1,x_2,t)=\begin{bmatrix} \ \ u_1(x_1,-x_2,t)\\ -u_2(x_1,-x_2,t) \end{bmatrix}, \ \ \tilde{d}(x_1,x_2,t)=\begin{bmatrix}\ \ d_1(x_1,-x_2,t) \\ \ \ d_2(x_1,-x_2,t) \\ -d_3(x_1,-x_2,t) \end{bmatrix}, \ x_2<0  \]
It turns out that the partially free boundary condition \eqref{PFB} is automatically satisfied. We can further compute that
\[ \tilde{u}\cdot\nabla \tilde{u} = \begin{bmatrix}
  \ \  u_1 \parf u_1 +  u_2 \pars u_1  \\
    - (u_1 \parf u_2 + u_2\pars u_2) 
\end{bmatrix}(x_1,-x_2,t) ,  \]
\[ \tilde{u}\cdot\nabla \tilde{d}  = \begin{bmatrix} 
  \ \ u_1\parf d_1 + u_2\pars d_1 \\
  \ \ u_1\parf d_2 + u_2\pars d_2 \\
    -(u_1\parf d_3 + u_2\pars d_3)
\end{bmatrix}(x_1,-x_2,t),\]
and
\[\nabla\cdot(\nabla \tilde{d}\odot \nabla \tilde{d})=\begin{bmatrix}
  \ \ 2\parf d_k\parff d_k + \parss d_k\parf d_k + \parfs d_k\pars d_k \\
    -(2\pars d_k\parss d_k + \parff d_k\pars d_k + \parfs d_k\parf d_k)
\end{bmatrix}(x_1,-x_2,t).\]

Also observe that the partially free boundary condition \eqref{PFB} implies that $\nabla_{\n} P=0$ on $\partial\RR^2_+$. This follows from
\begin{align*} - \pars P & = \partial_t u_2 + u_1\parf u_2 + u_2\pars u_2 + \Delta u_2 + \pars d_k\parss d_k + \parff d_k\pars d_k \\
& = 0+ \parff u_2  - \parf(\pars u_1) + \pars d_3 \Delta d_3 \\
& = 0+ \pars d_3 (\partial_t d_3 + u_1\parf d_3+u_2\pars d_3 - |\nabla d|^2 d_3)  = 0.
\end{align*}
Hence, we perform even reflection for $P$ and get $\tilde{P}(x_1,x_2,t)=P(x_1,-x_2,t)$ for $x_2<0$. Then
we can then show that the structure of the system is preserved via the reflection $(u,d,P)\mapsto (\tilde{u},\tilde{d},\tilde{P})$, i.e.,
\[\begin{cases}
    \partial_t \tilde{u} + \tilde{u}\cdot\nabla\tilde{u} - \Delta\tilde{u} + \nabla\tilde{P} = -\nabla\cdot(\nabla\tilde{d} \odot \nabla\tilde{d}-\frac12|\nabla\tilde{d}|^2\Id_2) \\
    \nabla\cdot\tilde{u} = 0 \\
    \partial_t \tilde{d} + \tilde{u}\cdot\nabla\tilde{d} = \Delta\tilde{d}+|\nabla\tilde{d}|^2\tilde{d}.
\end{cases}\]
Such a reflection is not possible if we instead consider Navier no-slip boundary condition $u\equiv0$ on $\partial\Omega$, thus free boundary condition \eqref{PFB} is both physically meaningful and mathematically useful: it allows  us to convert boundary estimates to interior estimates and saves half of our labor. In particular, Lemma \ref{lem:boundary_smallness} follows directly from Lemma \ref{lem:local_smallness}. 

However, it is difficult to tackle the general situation with curved boundary $\partial\Omega$. We assume that $\partial\Omega$ is smooth, and we can flatten the boundary in a way such that in the new coordinate,  the velocity field is still divergence-free. One example in \cite{HS18} is to take a map $ \phi: \RR^2_+\cap B_r(z') \to\Omega\cap B_r(z)$   given by $\phi(x_1,x_2)=(x_1,x_2 + h(x_1))$, where $h(x_1)$ is locally the graph of the boundary $\partial\Omega$. Then we define the transformed vector field $v$ on $\RR^2_+$ by $v=Tu=u\circ \phi - (u\circ\phi)\cdot(h',0) e_2$. This transformation has the property that $\nabla\cdot u=0$ implies  $\nabla\cdot v=0$ and tangential vector along $\partial\Omega$ still maps to tangential vector along $\partial\RR^2_+$, though the normal vector is not preserved. Thus, if we want to keep both the divergence-free property and the free boundary condition, we may think of reflection over curved boundary or generalize the system \eqref{LCF} to non-Euclidean metric setup, both will produce extra low order terms in the system.

\bigskip

\section*{Acknowledgements}
Y. Sire is partially support by NSF DMS Grant 2154219, ``Regularity vs singularity formation in elliptic and parabolic equations''.

\bigskip

\end{document}